\documentclass[10pt,a4paper,twoside,reqno]{amsart}

\usepackage{amssymb}
\usepackage{mathtools}
\usepackage[shortlabels]{enumitem}
\usepackage[utf8]{inputenc}
\usepackage{color}
\usepackage{mathabx}
\usepackage{tikz}
\usetikzlibrary{positioning,decorations.pathreplacing,decorations.markings,arrows.meta,calc,fit,quotes,cd,math,arrows,backgrounds,shapes.geometric}
\usepackage[all,2cell]{xy}\UseAllTwocells\SilentMatrices
\usepackage[a4paper,top=3cm,bottom=3cm,inner=2.5cm,outer=2.5cm]{geometry}
\definecolor{darkgreen}{rgb}{0,0.45,0}
\usepackage[colorlinks,citecolor=darkgreen,urlcolor=gray,final,hyperindex,linktoc=page,hyperfootnotes=true]{hyperref}
\usepackage[capitalize]{cleveref}
\usepackage{mathrsfs}
\usepackage{comment}

\crefname{equation}{}{}
\crefname{thm}{Theorem}{Theorems}
\crefname{defi}{Definition}{Definitions}
\crefname{rmk}{Remark}{Remarks}
\crefname{prop}{Proposition}{Propositions}
\crefname{ex}{Example}{Examples}
\crefname{cor}{Corollary}{Corollaries}
\crefformat{footnote}{#2\footnotemark[#1]#3}

\theoremstyle{plain}
\newtheorem{thm}{Theorem}[section]
\newtheorem{cor}[thm]{Corollary}
\newtheorem{lem}[thm]{Lemma}
\newtheorem{prop}[thm]{Proposition}

\theoremstyle{remark}
\newtheorem{rmk}[thm]{Remark}
\newtheorem{ex}[thm]{Example}

\theoremstyle{definition}
\newtheorem{defi}[thm]{Definition}

\definecolor{mypurple}{rgb}{0.5, 0.0, 0.5}
\newcommand{\chris}{\color{mypurple}Christina: }
\newcommand{\vas}{\color{blue}Vassilis: }
\definecolor{teocolor}{rgb}{0.0, 1.0, 0.0}
\newcommand{\teo}[1]{\textcolor{teocolor}{Theofilos: #1}}

\tikzstyle{start}=[to path={(\tikztostart.#1) -- (\tikztotarget)}]
\tikzstyle{end}=[to path={(\tikztostart) -- (\tikztotarget.#1)}]

\newcommand{\ca}{\mathcal}
\newcommand{\dc}{\mathbb}
\newcommand{\nc}{\mathsf}
\newcommand{\Gr}{\mathfrak}

\newcommand{\B}{\mathbf}
\newcommand{\wc}{\widecheck}
\newcommand{\wh}{\widehat}

\newcommand{\mi}{\textrm{-}}
\newcommand{\ot}{\otimes}
\newcommand{\ob}{\ensuremath{\mathrm{ob}}}
\newcommand{\Set}{\nc{Set}}
\newcommand{\Cat}{\nc{Cat}}
\newcommand{\VCat}{\ca{V}\textrm{-}\nc{Cat}}
\newcommand{\VCocat}{\ca{V}\textrm{-}\nc{Cocat}}
\newcommand{\VGrph}{\ca{V}\textrm{-}\nc{Grph}}
\newcommand{\Coalg}{\nc{Coalg}}
\newcommand{\Alg}{\nc{Alg}}

\newcommand{\Mon}{\nc{Mon}}
\newcommand{\Comon}{\nc{Comon}}
\newcommand{\End}{\nc{End}}
\newcommand{\Mnd}{\nc{Mnd}}
\newcommand{\Cmd}{\nc{Cmd}}
\newcommand{\Mod}{\nc{Mod}}
\newcommand{\Comod}{\nc{Comod}}

\newcommand{\op}{\mathrm{op}}

\newcommand{\id}{\mathrm{id}}

\tikzset{tick/.style={postaction={decorate,decoration={markings,mark=at position 0.5 with {\draw[-] (0,.5ex) -- (0,-.5ex);}}}}}
\tikzset{bul/.style={postaction={decoration={markings,mark=at position 0.5 with {\node{$\sbul$};}},decorate}}}

\tikzset{tick/.style={postaction={decorate,decoration={markings,mark=at position 0.5 with {\draw[-] (0,.4ex) -- (0,-.4ex);}}}}}
\newcommand{\tickar}{
  \begin{tikzcd}[baseline=-0.5ex,cramped,sep=small,ampersand
    replacement=\&]{}\ar[r,tick]\&{}
\end{tikzcd}}
\newcommand{\bular}{
  \begin{tikzcd}[baseline=-0.5ex,cramped,sep=small,ampersand
    replacement=\&]{}\ar[r,bul]\&{}
\end{tikzcd}}

\newcommand{\VMMat}{\ca{V}\mi\dc{M}\nc{at}}
\newcommand{\sbul}{\scriptstyle\bullet}
\newcommand{\tick}{\object@{|}}

\newcommand{\Span}{\dc{S}\nc{pan}}

\newcommand{\matr}[3]{\SelectTips{eu}{10}\xymatrix@C=.2in{#1\colon #2\ar[r]|-{\object@{|}} & #3}}
\newcommand{\Two}{\scriptstyle\Downarrow}

\newcommand{\dcD}[2]{\dc{D}[{#1},{#2}]}

\newcommand{\bultwocell}[9]{
  \begin{tikzcd}[ampersand replacement=\&,sep=.3in]
    #1\ar[r,bul,"{#2}"]\ar[d,"{#8}"']\ar[dr,phantom,"\Two{#9}"] \& #3\ar[d,"{#4}"] \\
    #7\ar[r,bul,"{#6}"'] \& #5
\end{tikzcd}}
\newcommand{\globtwocell}[7]{
  \begin{tikzcd}[ampersand replacement=\&]
    #1\ar[r,bul,"{#2}"]\ar[d,equal]\ar[dr,phantom,"\Two{#7}"] \& #3\ar[d,equal] \\
    #6\ar[r,bul,"{#5}"'] \& #4
\end{tikzcd}}

\begin{document}
\title{On categories of monads and comonads in double categories}

\author{Vasileios Aravantinos-Sotiropoulos}
\address{School of Applied Mathematical and Physical Sciences, National Technical University of Athens, Greece}
\email{v\_aravantinos@mail.ntua.gr}

\author{Theofilos Tsantilas}
\address{School of Applied Mathematical and Physical Sciences, National Technical University of Athens, Greece}
\email{teo.tsantilas@pm.me}

\author{Christina Vasilakopoulou}
\address{School of Applied Mathematical and Physical Sciences, National Technical University of Athens, Greece}
\email{cvasilak@math.ntua.gr}

\begin{abstract}
  As is well known in the literature, the category $\Mon(\ca{V})$ of monoids in a monoidal category $\ca{V}$ satisfies various fundamental categorical properties, at least when the monoidal base $\ca{V}$ is correspondingly well-behaved. In particular, $\Mon(\ca{V})$ is monadic over $\ca{V}$ as soon as free monoids exist, while if $\ca{V}$ is cocomplete or locally presentable and its tensor $\otimes$ is sufficiently compatible with the appropriate colimits, then $\Mon(\ca{V})$ inherits the analogous property. 
  
In the present work, we extend such results to the context of double categories. More precisely, we identify conditions on a double category $\dc{D}$ under which one can show that the category $\Mnd(\dc{D})$ of monads in $\dc{D}$ is monadic over the category of endomorphisms $\End(\dc{D})$, is cocomplete or even locally presentable. We also tackle the issue of local presentability in the dual case $\Cmd(\dc{D})$ of comonads. In these results, our assumptions on the double category $\dc{D}$ revolve around notions of colimit, in particular those of \emph{parallel} and \emph{stable local} colimits, as well as a notion of \emph{local presentability} of a double category which has been introduced in previous work.
\end{abstract}

\maketitle

\setcounter{tocdepth}{1}
\tableofcontents

\section{Introduction}

Double categories were introduced in the '60s by Charles Ehresmann \cite{Ehresmanndouble}, and have since been developed by multiple authors in various seminal works like \cite{DawPar,Limitsindoublecats,Adjointfordoublecats,ModelDC,Framedbicats,NiefieldSpan}.
 More recently, the subject has experienced somewhat of a renaissance, with numerous authors developing a wide spectrum of double-categorical subjects, spanning also various generalisations -- such as virtual double categories, enrichment in double categories and notions of double-categorical logic. A few indicative, but certainly not exhaustive, references are \cite{aleiferi,2Catmod,Clarke2022,Doublefibs,Nathanael1,DavidJazSophie}.

Concerning the current work, the concept of a monad, together with its dual of comonad, have of course been of central importance in ordinary category theory and subsequently in 2-dimensional category theory since the very beginning of the subject, see e.g. \cite{FormalTheoryMonadsI}. Although it is well known that a monad in a double category, as an object, reduces to that of a monad in its corresponding (horizontal) bicategory, the move to the double-categorical notion is not at all meaningless. In fact, the extra degree of freedom afforded to the 2-cells in a double category is precisely what is needed so that the associated notion of monad morphism captures what it ``morally should'' in many examples, contrary to the corresponding bicategorical notion. This will also be illustrated by the examples included in the first section below.

This manuscript can thus be positioned among works that explore fundamental properties of categories of monads (previously called monoids in certain references) in double categories, like \cite{Framedbicats,Monadsindoublecats,Doubleadjunctionsandfreemonads,SweedlerDouble,CommutingTensorProducts}.
More precisely, the purpose of this work is to identify conditions on a double category $\dc{D}$ that will imply certain desirable properties for the category $\Mnd(\dc{D})$ of monads therein, and their dual counterparts -- to the extent that they hold -- for the category of $\Cmd(\dc{D})$ of comonads. The particular properties we are interested in concern the existence of free monads, cocompleteness, monadicity over the category of endomorphisms and local presentability for $\Mnd(\dc{D})$, and dually the existence of free comonads, comonadicity and also local presentability for $\Cmd(\dc{D})$. The relevant conditions for the surrounding double category $\dc{D}$ are quite natural and divide in two different sets: the one resembles a more standard bicategorical setting -- namely a fibrant double category whose bicategory has sufficiently many local colimits -- and applies only to the case of monads, and the other set of conditions assumes local presentability on $\dc{D}$ itself, a concept introduced in the recent \cite{SweedlerDouble}.

More explicitly, \cref{prop:freemonadfunctorexistence1,cor:monadicity,cor:Mnd(D)cocomplete} establish existence of free monads, monadicity and cocompleteness of the category of monads under the first set of assumptions, \cref{prop:freemonadfunctorexistence2,cor:monadicity,thm:Mndlp} establish existence of free monads, monadicity and local presentability of the category of monads under the second set of assumptions, and \cref{prop:freecomonadexistence,cor:Cmd(D)comonadic,thm:Cmdlp} establish existence of free comonads, comonadicity and local presentability of the category of comonads -- bearing in mind that local presentability does not dualize. Very broadly speaking, the first trail concerning monads extends results from \cite{Varthrenr} to the double-categorical context, whereas the other trails use methods inspired by \cite{MonComonBimon,LocallyPresentable}. Moreover, as an application of the fact that the category of monads is locally presentable, we obtain the known fact that $\ca{V}$-$\mathsf{Cat}$ is locally presentable when $\ca{V}$ is, see \cite{KellyLack}.

In the process of proving these fundamental results, we make a choice, in the first \cref{sec:doublecats}, of illustrating key concepts such as parallel limits and colimits, stable local limits and colimits and local presentability in a number of basic examples of double categories without assuming prior familiarity with those, such as relations, spans, matrices and polynomials. In each of these cases, fibrancy -- the existence of companions and conjoints that give a coherent passage from the vertical/tight to the horizontal/loose structure -- allows us to employ results from the theory of (op)fibrations and relate structures from the fibres to the total categories and vice versa. Such techniques are expected to carry on such developments onto the categories of (one-sided) modules for monads, comodules for comonads as well as bimodules and bicomodules in future work.


\subsubsection*{Acknowledgements}
The authors would like to thank Nathanael Arkor, Florian De Leger, Ignacio L\'opez Franco and Ross Street for helpful discussions. All authors acknowledge that this work was implemented in the framework of H.F.R.I call ``3rd Call for H.F.R.I.’s Research Projects to
Support Faculty Members \& Researchers'' (H.F.R.I. Project Number: 23249). The second author was financially supported by the Special Account for Research Funding of the National Technical University of Athens.

\section{Double categories}\label{sec:doublecats}

In this section,
we start by recalling (fibrant) double categories together with certain results relating to fibration structures that arise. This is mostly standard material on double category theory, found for example in \cite{Framedbicats,GrandisDouble}. We then discuss parallel limits in double categories, local presentability in double categories as well as the categories of monads and comonads, which are the main objects of interest in this work. Some of this material is drawn from \cite{SweedlerDouble} -- which provides further particular references -- and some parts are new. 
We also present a number of examples to illustrate the theory.

\subsection*{Double and fibrant double categories}

We here recall certain standard definitions, mostly in order to set terminology. 
\begin{defi}\label{def:doublecats}
  A \emph{(pseudo) double category} $\dc{D}$
  consists of a category $\dc{D}_0$ of objects and
  a category $\dc{D}_1$ of arrows, with identity, source and target, and composition structure
  given by the functors
  \begin{displaymath}
    \B{1}\colon\dc{D}_0\to\dc{D}_1,\quad
    \Gr{s},\Gr{t}\colon\dc{D}_1\rightrightarrows\dc{D}_0,\quad \text{and}\quad
    \odot\colon\dc{D}_1{\times_{\dc{D}_0}}\dc{D}_1\to\dc{D}_1
  \end{displaymath}
  respectively such that
  $\Gr{s}(1_X)$=$\Gr{t}(1_X)$=$X,\;\Gr{s}(M\odot N)$=$\Gr{s}(N),\;
  \Gr{t}(M\odot N)$=$\Gr{t}(M)$
  for all $X\in\ob(\dc{D}_0)$ and $M,N\in\ob(\dc{D}_1)$,
  equipped with natural isomorphisms
  $a\colon(M\odot N)\odot P\overset{\sim}{\Rightarrow} M\odot(N\odot P)$, $\ell\colon1_{\Gr{s}(M)}\odot M\overset{\sim}{\Rightarrow} M$, $r\colon M\odot1_{\Gr{t}(M)}\overset{\sim}{\Rightarrow} M$
  in $\dc{D}_1$ such that
  $\Gr{t}(a),\Gr{s}(a),\Gr{t}(\ell),\Gr{s}(\ell),
  \Gr{t}(r),\Gr{s}(r)$ are all
  identities, and satisfying the usual coherence conditions.
\end{defi}

The objects of $\dc{D}_0$ are the \emph{0-cells} and its morphisms are the
\emph{vertical} or \emph{tight 1-cells} $f\colon X\to Y$. The objects of $\dc{D}_1$ are the \emph{horizontal} or \emph{loose 1-cells}
$M\colon X\bular Y$, and the morphisms of $\dc{D}_1$ are the
\emph{2-cells}, which will be drawn as
\begin{equation}\label{2morphism}
  \bultwocell{X}{M}{Y}{g}{W}{N}{Z}{f}{\alpha}
\end{equation}
and sometimes denoted by $^f\alpha^g:M\Rightarrow N$.
Strict (vertical) identities are $\mathrm{id}_X:X\to X$ and $\id_M:M\Rightarrow M$,
and horizontal units are $1_X\colon X\bular X$ and
$1_f:1_X\Rightarrow 1_Y$.
A 2-cell whose vertical 1-cell components are both identity morphisms, such as the structural isomorphisms $a,\ell,r$ above, is called  \emph{globular}.

From any double category $\dc{D}$, we obtain a bicategory $\ca{H}(\dc{D})$ called its \emph{horizontal bicategory}, consisting of the 0-cells, the horizontal 1-cells and only the globular 2-cells between them.
We denote by $^X\dc{D}_1$ and $\dc{D}_1^Y$ the subcategories of $\dc{D}_1$ of fixed-domain or fixed-codomain horizontal 1-cells, and 2-cells with identity domain or codomain vertical arrows respectively, namely
\begin{displaymath}
  \begin{tikzcd}
    X\ar[r,bul,"M"]\ar[d,equal]\ar[dr,phantom,"\Two"] & Y\ar[d,"g"] \\
    X\ar[r,bul,"N"'] & W
  \end{tikzcd}\qquad\mathrm{or}\qquad
  \begin{tikzcd}
    X\ar[r,bul,"M"]\ar[d,"f"']\ar[dr,phantom,"\Two"] & Y\ar[d,equal] \\
    Z\ar[r,bul,"N"'] & Y.
  \end{tikzcd}
\end{displaymath}
Similarly, $^X\dc{D}^Y_1=\ca{H}(\dc{D})(X,Y)=\dc{D}[X,Y]$ denotes the subcategory of fixed-domain and fixed-codomain horizontal 1-cells together with globular 2-cells, namely the hom-categories of its horizontal bicategory.

We denote by $\End(\dc{D})$ the subcategory of $\dc{D}_1$ of horizontal endo-1-cells $M:X\bular X$ and 2-cells with the same source and target denoted $a_f$, namely the equalizer of $\mathfrak{s},\mathfrak{t}:\dc{D}_1\rightrightarrows \dc{D}_0$. As it will be useful to what follows, notice that as for all equalizers, $\End(\dc{D})$ can be equivalently written as the pullback
\[
  \begin{tikzcd}
    \End(\dc{D}) \arrow[r] \arrow[d] & \dc{D}_1 \arrow[d, "{\langle \mathfrak{s},\mathfrak{t}\rangle}"] \\
    \dc{D}_0 \arrow[r, "\Delta"']    & \dc{D}_0\times\dc{D}_0.
  \end{tikzcd}
\]

\begin{ex}\label{ex:Span}
  Given a category $\ca{C}$ which has pullbacks, there is a double category $\dc{S}\nc{pan}(\ca{C})$ whose vertical category $\dc{S}\nc{pan}(\ca{C})_0$ is $\ca{C}$ itself with horizontal 1-cells $M\colon X\bular Y$ being spans in $\ca{C}$. A 2-cell $^f\alpha^g:M\Rightarrow N$ is a morphism $\alpha\colon M\to N$ in $\ca{C}$ making the diagram
  \begin{center}
    \begin{tikzcd}
      X\ar[d,"f"'] & M\ar[l]\ar[r]\ar[d,"\alpha"] & Y\ar[d,"g"] \\
      Z & N\ar[l]\ar[r] & W
    \end{tikzcd}
  \end{center}
  commute. Horizontal composition is given by pullback, which is of course associative only up to (coherent) isomorphism, while for every object $X\in\ca{C}$ the 2-cell $1_X$ is the span of identities $
  \begin{tikzcd}
    X & X\ar[l,"\rm{id}_X"']\ar[r,"\rm{id}_X"] & X.
  \end{tikzcd}$
Notice how the category of horizontal endo-1-cells $\End(\Span(\ca{C}))$ is $\nc{Grph}(\ca{C})$, the category of internal graphs in $\ca{C}$. 
\end{ex}

\begin{ex}\label{ex:Rel}
  If $\ca{C}$ is a category with pullbacks which is furthermore regular, then there is also a double category $\dc{R}\nc{el}(\ca{C})$ whose horizontal 1-cells are relations in $\ca{C}$. Horizontal composition is then given by the usual composition of relations in any regular category: after performing the pullback as in $\dc{S}\nc{pan}(\ca{C})$ above, one takes the monic part of the (regular epi, mono)-factorization of the resulting span (viewed as a morphism into a binary product). The unit 1-cell $1_X\colon X\bular X$ is the diagonal relation $\Delta_X=\langle 1_X,1_X\rangle\colon X\rightarrowtail X\times X$.
  Note that now a 2-cell $^f\alpha^g:M\Rightarrow N$ can be described as a pair of morphisms $(f,g)$ in $\ca{C}$ such that we have an inclusion of relations $gMf^{\circ}\subseteq N$ (or equivalently $M\subseteq g^{\circ}Nf$), since the morphism $\alpha\colon M\to N$ is uniquely determined. For $\ca{C}=\nc{Set}$ this is just the implication $(x,y)\in M\implies (f(x),g(y))\in N$. 
\end{ex}

\begin{ex}
  Similarly, if $\ca{C}$ is a category enriched in the cartesian closed category $\nc{Pos}$ of posets and order preserving maps, which is regular in the sense of \cite{Kurz-Velebil}, then we can define a double category $\dc{R}\nc{el}_{\nc{Idl}}(\ca{C})$ whose horizontal 1-cells are the \emph{ideals}. The latter are internal relations $R\colon X\bular Y$ (here meaning a jointly \emph{fully faithful} pair) which satisfy the implication
  \begin{displaymath}
    x'\leq x, \quad (x,y)\in_A R, \quad y\leq y'\quad \implies (x',y')\in_A R
  \end{displaymath}
  for all generalized elements $x,x'\colon A\to X$ and $y,y'\colon A\to Y$ in $\ca{C}$. Composition is defined analogously to the ordinary case, now with respect to the $(\nc{so},\nc{ff})$ factorization system which is present in such a setting (see \cite{VassilisPosExReg}). The units $1_X\colon X\bular X$ are the canonical ideals $I_X$ obtained as comma squares of identity morphisms
  \begin{displaymath}
    \begin{tikzcd}
      I_X\ar[r]\ar[d]\ar[dr,phantom,"\leq"] & X\ar[d,equal] \\
      X\ar[r,equal] & X
    \end{tikzcd}
  \end{displaymath}
We note that in the case $\ca{C}=\nc{Pos}$ this is the double category often denoted by $\dc{P}\nc{os}$ (e.g. \cite{NiefieldSpan}).
\end{ex}

\begin{ex}\label{ex:VMMat}
  If $\ca{V}$ is any monoidal category with small coproducts which are preserved by $\mi\otimes\mi$ in both variables (for example, if $\ca{V}$ is symmetric monoidal closed), then there is a double category $\VMMat$ of $\ca{V}$-matrices.
  The vertical category $\VMMat_0$ is just $\nc{Set}$, whereas
  the horizontal 1-cells $S\colon X\tickar Y$ are functors $S\colon Y\times X\to\ca{V}$, where $Y\times X$ is viewed
  as a discrete category; equivalently, these are families of objects $\{S(y,x)\}_{(y,x)\in Y\times X}$ in $\ca{V}$,
  sometimes also denoted $\{S_{y,x}\}$. The 2-morphisms $^f\alpha^g\colon S\Rightarrow T$ are natural transformations
  \begin{displaymath}
    \begin{tikzcd}[row sep=.1in,baseline=2ex]
      Y\times X\ar[rr,bend left,"S"]\ar[rr,phantom,"\Two\alpha"]\ar[dr,bend right=5,"g\times f"'] && \ca{V} \\
      & Z\times W\ar[ur,bend right=5,"T"'] &
    \end{tikzcd}
  \end{displaymath}
  given by families of arrows $\alpha_{y,x}\colon S(y,x)\to T(g(y),f(x))$ in $\ca{V}$, for all $x\in X$ and $y\in Y$.
  The identity $\ca{V}$-matrix $1_X\colon X\tickar X$  is given by $1_X(x',x)=I$ if $x=x'$ or $0$ otherwise,
  and the horizontal composition functor maps two composable $\ca{V}$-matrices $T\colon Y\tickar Z$ and $S\colon X\tickar Y$ to
  $T\circ S\colon X\tickar Z$ given by
  \begin{displaymath}
    (T\circ S)(z,x)=\sum_{y\in Y} T(z,y)\otimes S(y,x).
  \end{displaymath}
  For more on enriched matrices, see for example \cite{Varthrenr,VCocats}. In particular, the category of endomaps is the usual category of enriched graphs, $\End(\VMMat)=\VGrph$.
\end{ex} 

\begin{ex}
  There is a double category $\dc{R}\nc{ing}$ of rings and bimodules, see e.g. \cite{Pare2021morphisms}.
  Specifically, $\dc{R}\nc{ing}_0=\nc{Ring}$ is the category of (unital) rings, a horizontal 1-cell $R\bular S$ is an $(S,R)$-bimodule and the horizontal composition of $M\colon R\bular S$ and $N\colon S\bular T$ is given by the tensor product $N\otimes_{S} M$. 
  The unit $1_R\colon R\bular R$ is simply the ring $R$ equipped with its canonical structure as an $(R,R)$-bimodule. A 2-cell
  \begin{displaymath}
    \begin{tikzcd}
      R\ar[r,bul,"M"]\ar[d,"f"']\ar[dr,phantom,"\Two\phi"] & S\ar[d,"g"] \\
      R'\ar[r,bul,"{M'}"'] & S'
    \end{tikzcd}
  \end{displaymath}
  is homomorphism $\phi\colon M\to M'$ of $(S,R)$-bimodules, where $M'$ is viewed as such by restriction of scalars on the left and right along $g$ and $f$ respectively. In other words, $\phi$ is an abelian group homomorphism which satisfies $\phi(sm)=g(s)\phi(m)$ and $\phi(mr)=\phi(m)f(r)$
for all $m\in M$, $r\in R$ and $s\in S$.

We should mention here that $\dc{R}\nc{ing}$ is a special case of the general construction \cite{Framedbicats} of the double category $\dc{B}\nc{im}(\dc{D})$, or $\dc{M}\nc{od}(\dc{D})$, of monads and bimodules in $\dc{D}$\footnote{Of course, the bimodule construction also captures many other interesting examples of double categories. For instance, given a cocomplete monoidal closed category $\ca{V}$, $\dc{B}\nc{im}(\VMMat)$ is the double category $\ca{V}\mi\dc{P}\nc{rof}$ of $\ca{V}$-enriched categories and profunctors (or distributors). Correspondingly, for a category $\ca{C}$ with pullbacks and universal coequalizers, $\dc{B}\nc{im}(\dc{S}\nc{pan}(\ca{C}))$ is the double category $\dc{P}\nc{rof}(\ca{C})$ of internal categories and distributors.}, where $\dc{D}$ is required to have \emph{stable local} coequalizers in our terminology (\cref{def: stable local}). Specifically, $\dc{R}\nc{ing}$ is obtained by taking $\dc{D}$ to be the double category $\dc{A}\nc{b}$ of abelian groups; the latter is just the monoidal category $\nc{Ab}$ considered as an one-object, only identity vertical arrow double category. 
\end{ex}

\begin{ex}\label{ex:Poly}
  For a locally cartesian closed category  $\ca{C}$, the double category $\dc{P}\nc{oly}(\ca{C})$ of polynomials over $\ca{C}$ introduced in \cite{GambinoKockPoly}, has 0-cells and vertical 1-cells the objects and arrows of $\ca{C}$ respectively, i.e. $\dc{P}\nc{oly}(\ca{C})_0=\ca{C}$. Its horizontal 1-cells are \textit{polynomials} over $\ca{C}$, namely diagrams of the form
  \[
    I\leftarrow X\to Y\to J
  \]
  in $\ca{C}$ which induce polynomial endofunctors of $\ca{C}$. A 2-cell between two polynomials $I\leftarrow X\to Y\to J$ and $I'\leftarrow X'\to Y'\to J'$ is a commutative diagram of the form
  \[
  \begin{tikzcd}
	I & X & Y & J \\
	& \bullet & Y \\
	{I'} & {X'} & {Y'} & {J'}
	\arrow[from=1-1, to=3-1]
	\arrow[from=1-2, to=1-1]
	\arrow[from=1-2, to=1-3]
	\arrow[from=1-3, to=1-4]
	\arrow[from=1-3, to=2-3, equal]
	\arrow[from=1-4, to=3-4]
	\arrow[from=2-2, to=1-2]
	\arrow[from=2-2, to=2-3]
	\arrow[from=2-2, to=3-2]
	\arrow["\lrcorner"{anchor=center, pos=0.125}, draw=none, from=2-2, to=3-3]
	\arrow[from=2-3, to=3-3]
	\arrow[from=3-2, to=3-1]
	\arrow[from=3-2, to=3-3]
	\arrow[from=3-3, to=3-4]
\end{tikzcd}
\]
which represents a natural transformation between the induced polynomial functors. Composition of $I\leftarrow X\to Y\to J$ and $J\leftarrow X'\to Y'\to J'$ is the polynomial $I\leftarrow M\to N\to J'$ obtained by taking pullbacks and distributivity pullbacks \cite[Def.~2.2.2]{WeberPoly}, as follows
  \[
\begin{tikzcd}[row sep=8pt, column sep=12pt]
	&&& M && N && \bullet &&& \\
	&&&& {} & \bullet & {\text{\footnotesize d.pb}} \\
	&& X && Y && {X'} && {Y'} \\
	I &&&&& J &&&&& {J'}
	\arrow[from=1-4, to=1-6]
	\arrow[from=1-4, to=3-3]
	\arrow["\lrcorner"{anchor=center, pos=0.125}, draw=none, from=1-4, to=3-5]
	\arrow[from=1-6, to=1-8]
	\arrow[from=1-6, to=2-6]
	\arrow[from=1-8, to=3-9]
	\arrow[from=2-6, to=3-5]
	\arrow[from=2-6, to=3-7]
	\arrow["\lrcorner"{anchor=center, pos=0.125, rotate=-45}, draw=none, from=2-6, to=4-6]
	\arrow[from=3-3, to=3-5]
	\arrow[from=3-3, to=4-1]
	\arrow[from=3-5, to=4-6]
	\arrow[from=3-7, to=3-9]
	\arrow[from=3-7, to=4-6]
	\arrow[from=3-9, to=4-11]
\end{tikzcd}
  \]
  We also note another variation, that of $\dc{P}\nc{oly}_c(\ca{C})$ \cite{Monadsindoublecats} which is the sub-double category of $\dc{P}\nc{oly}(\ca{C})$  with 2-cells diagrams
  \[
    \begin{tikzcd}
      I \arrow[d] & X \arrow[rd, "\lrcorner", phantom, very near start] \arrow[r] \arrow[l] \arrow[d] & Y \arrow[r] \arrow[d]  & J \arrow[d] \\
      I' & X' \arrow[l] \arrow[r]  & Y' \arrow[r] & J'
    \end{tikzcd}
  \]
  whose middle square is a pullback. These represent only cartesian natural transformations between the induced polynomial functors \cite[3.12]{GambinoKockPoly}. 
  
  More generally, we can define the double category of polynomials for any category $\ca{C}$ with pullbacks \cite{WeberPoly} as long as we require polynomials to be $I\leftarrow X\to Y\to J$ with $X\to Y$ exponentiable.
\end{ex}



For the remainder of this paper we shall essentially restrict to the setting of \emph{fibrant} double categories (also called \emph{framed bicategories} \cite{Framedbicats}) whose definition we recall below. One technical reason for this is to have a nice relationship between (co)limits in the categories $\dc{D}_{1}$, $\dc{D}_{0}$ on one hand and the various fixed-domain or codomain 1-cell subcategories on the other. In practice, most double categories of interest satisfy this property: indeed, all the examples mentioned earlier are fibrant.

\begin{defi}
  A \emph{fibrant double category} is a double category $\dc{D}$ such that the functor
  \begin{equation}\label{eq:stbifibration}
    \langle\Gr{s},\Gr{t}\rangle\colon\dc{D}_1\longrightarrow\dc{D}_0\times\dc{D}_0
  \end{equation}
  is a fibration, or equivalently an opfibration.
\end{defi}

This means that vertical 1-cells can be turned into horizontal 1-cells in a canonical way. Explicitly, given a vertical 1-cell $f:X\to Y$ in $\dc{D}$,
a \emph{companion}, resp. \emph{conjoint}, of $f$
is a horizontal 1-cell $\wh{f}\colon X\bular Y$, resp. $\wc{f}\colon Y\bular X$, together with
2-morphisms
\begin{displaymath}
  \begin{tikzcd}[ampersand replacement=\&,sep=.3in]
    X\ar[r,bul,"\wh{f}"]\ar[d,"f"']\ar[dr,phantom,"\Two{p_1}"] \& Y\ar[d,equal] \\
    Y\ar[r,bul,"{1_Y}"'] \& Y
  \end{tikzcd}\qquad
  \begin{tikzcd}[ampersand replacement=\&,sep=.3in]
    X\ar[r,bul,"1_X"]\ar[d,equal]\ar[dr,phantom,"\Two{p_2}"] \& X\ar[d,"f"] \\
    X\ar[r,bul,"{\wh{f}}"'] \& Y
  \end{tikzcd}\;\;\textrm{ resp. }\;\;
  \begin{tikzcd}[ampersand replacement=\&,sep=.3in]
    Y\ar[r,bul,"\wc{f}"]\ar[d,equal]\ar[dr,phantom,"\Two{q_1}"] \& X\ar[d,"f"] \\
    Y\ar[r,bul,"{1_Y}"'] \& Y
  \end{tikzcd}
  \qquad
  \begin{tikzcd}[ampersand replacement=\&,sep=.3in]
    X\ar[r,bul,"1_X"]\ar[d,"f"']\ar[dr,phantom,"\Two{q_2}"] \& X\ar[d,equal] \\
    Y\ar[r,bul,"\wc{f}"'] \& X
  \end{tikzcd}
\end{displaymath}
such that $p_1p_2=1_f$ and $p_1\odot p_2\cong1_{\wh{f}}$, resp. $q_1q_2=1_f$ and $q_2\odot q_1\cong 1_{\wc{f}}$.

Furthermore, one can then verify that there are bijections between 2-cells $^f\alpha^g$ and globular 2-cells of the following forms
\begin{equation}
  \begin{tikzcd}
    X\ar[r,bul,"M"]\ar[d,equal]\ar[drr,phantom,"\Two\wh{\alpha}"] & Y\ar[r,bul,"\wh{g}"] & W\ar[d,equal] \\
    X\ar[r,bul,"\wh{f}"'] & Z\ar[r,bul,"N"'] & W
  \end{tikzcd}\quad
  \begin{tikzcd}
    Z\ar[r,bul,"\wc{f}"]\ar[d,equal]\ar[drr,phantom,"\Two\wc{\alpha}"] & X\ar[r,bul,"M"] & Y\ar[d,equal] \\
    Z\ar[r,bul,"N"'] & W\ar[r,tick,"\wc{g}"'] & Y
  \end{tikzcd}
\end{equation}
\begin{displaymath}
  \begin{tikzcd}
    X\ar[d,equal]\ar[rrr,bul,"M"]\ar[drrr,phantom,"\Two"] &&& Y\ar[d,equal] \\
    X\ar[r,bul,"\wh{f}"'] & Z\ar[r,bul,"N"'] & W\ar[r,bul,"\wc{g}"'] & Y
  \end{tikzcd}
  \quad
  \begin{tikzcd}
    Z\ar[d,equal]\ar[r,bul,"\wc{f}"]\ar[drrr,phantom,"\Two"] & X\ar[r,bul,"M"] & Y\ar[r,bul,"\wh{g}"] & W\ar[d,equal] \\
    Z\ar[rrr,bul,"N"'] &&& W.
  \end{tikzcd}
\end{displaymath}

\begin{ex}
  In the double category $\dc{S}\nc{pan}(\ca{C})$, given any $f\colon X\to Y\in\ca{C}$, the companion and conjoint are respectively the spans
  \begin{displaymath}
    \begin{tikzcd}
      X & X\ar[l,"\rm{id_X}"']\ar[r,"f"] & Y
    \end{tikzcd}
    \quad \text{and}\quad
    \begin{tikzcd}
      Y & X\ar[r,"\rm{id_X}"]\ar[l,"f"'] & X
    \end{tikzcd}
  \end{displaymath}
  The same holds in $\dc{R}\nc{el}(\ca{C})$. In other words, $\wh{f}$ is $f$ viewed as a relation (sometimes called its \emph{graph}), while $\wc{f}$ is precisely the opposite of the latter and is usually denoted by $f^{\circ}$.
\end{ex}

\begin{ex}
  If $\ca{C}$ is a regular $\nc{Pos}$-enriched category and $f\colon X\to Y\in\ca{C}$, then in $\dc{R}\nc{el}_{\nc{Idl}}(\ca{C})$ $\wh{f}$ and $\wc{f}$ are the relations obtained as the following two comma squares:
  \begin{displaymath}
    \begin{tikzcd}
      \wh{f}\ar[d]\ar[r]\ar[dr,phantom,"\leq"] & Y\ar[d,equal] \\
      X\ar[r,"f"'] & Y
    \end{tikzcd}
    \quad
    \begin{tikzcd}
      \wc{f}\ar[d]\ar[r]\ar[dr,phantom,"\leq"] & X\ar[d,"f"] \\
      Y\ar[r,equal] & Y
    \end{tikzcd}
  \end{displaymath}
  If $\ca{C}=\nc{Pos}$, then we have precisely $\wh{f}=\{(x,y)\in X\times Y | f(x)\leq y\}$ and $\wc{f}=\{(y,x)\in Y\times X| y\leq f(x)\}$.
\end{ex}

\begin{ex}
  In the double category $\VMMat$, any function $f\colon X\to Y$ canonically determines two horizontal 1-cells
  $f_*\colon X\tickar Y$ and $f^*\colon Y\tickar X$ given by the $\ca{V}$-matrices
  \begin{displaymath}
    f_*(y,x)=f^*(x,y)=
    \begin{cases}
      I,\quad \mathrm{if  }\;f(x)=y\\
      0,\quad \mathrm{ otherwise}
    \end{cases}
  \end{displaymath}
  where $I$ is the monoidal unit of $\ca{V}$. 
\end{ex}

\begin{ex}
  In $\dc{R}\nc{ing}$, given any ring homomorphism $f\colon R\to S$, one can canonically endow $S$ with an $(S,R)$-bimodule structure by restricting scalars along $f$ on the right, and this defines $\wh{f}$.
  Dually, the conjoint is $S\colon S\bular R$, where $S$ is viewed as an $(R,S)$-bimodule by restricting scalars on the left.
\end{ex}

\begin{ex}
The double categories $\dc{P}\nc{oly}(\ca{C})$ and $\dc{P}\nc{oly}_c(\ca{C})$ are fibrant \cite{GambinoKockPoly}. By similar arguments as in $\dc{S}\nc{pan}(\ca{C})$, for a vertical 1-cell $f:X\to Y$, its companion and conjoint are given in both cases by
  \[
    X\overset{\id_X}{\longleftarrow} X \overset{\id_X}{\longrightarrow} X\overset{f}{\longrightarrow} Y\ \ \ \text{and}\ \ \  Y\overset{f}{\longleftarrow} X\overset{\id_X}{\longrightarrow} X\overset{\id_X}{\longrightarrow} X
  \]
  respectively.
\end{ex}


In what follows, we establish various adjunctions between fibrations that arise in the fibrant double setting. We here provide the statement of a general `fibred adjunction' result which is used repeatedly, and follows from an inspection of the proof of \cite[Prop.~8.4.2]{Handbook2}. Recall that a functor between fibrations is called \emph{fibred} when it makes the formed triangle commute and preserves cartesian liftings.

\begin{prop}\label{thm:totaladjointthm}
  Suppose $H\colon\ca{A}\to\ca{B}$ is a fibred functor between fibrations $F\colon\ca{A}\to \ca{C}$ and $G\colon\ca{B}\to\ca{C}$. Then $H$ has a left adjoint $L\colon\ca{B}\to\ca{A}$ if for every $I\in\ca{C}$, there are `fiberwise' adjunctions $L_I\dashv H_I$.
  \begin{displaymath}
    \begin{tikzcd}
\ca{A}\ar[rr,"H"',"\bot"]\ar[dr,"F"'] && \ca{B}\ar[dl,"G"]\ar[ll,dotted,bend right=20,"L"'] \\
& \ca{C} &
    \end{tikzcd}
  \end{displaymath}
\end{prop}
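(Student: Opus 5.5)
We will prove the statement by constructing the left adjoint objectwise, via universal arrows. By the standard pointwise characterisation of adjoints, it suffices to produce, for each $B\in\ca{B}$, an object $LB\in\ca{A}$ and a morphism $\eta_B\colon B\to HLB$ that is universal from $B$ to $H$. Given $B$, put $I=G(B)$. We take $LB:=L_I(B)\in\ca{A}_I$ and $\eta_B$ to be the unit of the fibrewise adjunction $L_I\dashv H_I$. Since $H$ commutes with the projections, we have $H_I(L_IB)=H(L_IB)$, so $\eta_B$ is a vertical morphism $B\to HLB$ lying over $\id_I$.

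To check universality, take any $A\in\ca{A}$ and any $g\colon B\to HA$ in $\ca{B}$. Its image $u:=G(g)\colon I\to J$ has codomain $J=F(A)$, using $GH=F$. We factor $g$ through a cartesian lift. Choose an $F$-cartesian lift $\theta\colon u^*A\to A$ of $u$. Because $H$ is fibred, $H\theta\colon H(u^*A)\to HA$ is $G$-cartesian over $u$. Hence $g$ factors uniquely as $g=H\theta\circ g'$ with $g'\colon B\to H(u^*A)$ vertical over $I$.

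The fibrewise adjunction $L_I\dashv H_I$ then gives a unique vertical $\bar g'\colon L_IB\to u^*A$ with $H\bar g'\circ\eta_B=g'$. We set $\bar g:=\theta\circ\bar g'$, and then $H\bar g\circ\eta_B=H\theta\circ g'=g$.

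For uniqueness, suppose $k\colon LB\to A$ satisfies $Hk\circ\eta_B=g$. Then $F(k)=G(Hk)=G(Hk\circ\eta_B)=u$, since $\eta_B$ is vertical. Cartesianness of $\theta$ lets us write $k=\theta\circ k'$ with $k'$ vertical. Then $H\theta\circ(Hk'\circ\eta_B)=g$, and uniqueness of the factorisation through the cartesian $H\theta$ forces $Hk'\circ\eta_B=g'$. Fibrewise uniqueness then gives $k'=\bar g'$, so $k=\bar g$.

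Assembling these universal arrows yields $L$ together with $L\dashv H$, with $L$ restricting to $L_I$ on each fibre. In general this $L$ need not be fibred; the statement does not require it. The one step that needs care is the uniqueness argument. There we must use that the unit is vertical, so that $F(k)$ is forced to equal $u$, and then combine the uniqueness of cartesian factorisations in $\ca{A}$ and in $\ca{B}$, the latter via $H$ being fibred. This mirrors the argument in \cite[Prop.~8.4.2]{Handbook2}. Note that we do not need the reindexing functors to commute with the $L_I$ (no Beck–Chevalley condition), because we never ask $L$ to preserve cartesian arrows.
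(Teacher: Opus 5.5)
Your proof is correct and takes essentially the route the paper intends. The paper gives no argument of its own beyond pointing to an inspection of \cite[Prop.~8.4.2]{Handbook2}. Your construction of universal arrows from the vertical fibrewise units, using factorisation through the cartesian $H\theta$, is the standard argument behind that citation; the extra Beck--Chevalley-type hypotheses there serve only to make $L$ itself fibred, which, as you note, is not required here.
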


Its dual statement for a right adjoint of an opfibred functor holds as well, reducing its existence to adjunctions between the fibres. On the other hand, right adjoints for fibred functors and left adjoints for opfibred functors require extra conditions. For a relevant discussion, see e.g. \cite[\S~5.3]{PhDChristina}.

\subsection*{Parallel limits and colimits}
We review some basic material concerning parallel limits and colimits in double categories, see \cite[\S~3.1]{SweedlerDouble}. Notably, these do not coincide with the general double-categorical (co)limits found in e.g. \cite{Limitsindoublecats}, rather correspond to `local' ones; the terminology was introduced by Bob Par\'e. 

\begin{defi}\label{def:parallelcolimits}
  Let $\dc{D}$ be a double category and $\ca{I}$ is any small category. We say that $\dc{D}$ has \emph{parallel
  $\ca{I}$-(co)limits} if the categories $\dc{D}_0$, $\dc{D}_1$ both have $\ca{I}$-(co)limits and the functors $\Gr{s},\Gr{t}\colon\dc{D}_1\to\dc{D}_0$
  preserve them. If $\dc{D}$ has parallel $\ca{I}$-(co)limits for any small $\ca{I}$, then we say that it is \emph{parallel
  (co)complete}.
\end{defi}

\begin{ex}
The double category $\dc{S}\nc{pan}(\ca{C})$ has parallel $\ca{I}$-(co)limits for any $\ca{I}$ for which $\ca{C}$ itself has $\ca{I}$-(co)limits. This is easy to see because $\dc{S}\nc{pan}(\ca{C})_1$ can be viewed as a category of functors into $\ca{C}$.
\end{ex}

\begin{ex}
The double category $\dc{R}\nc{el}(\ca{C})$ has parallel $\ca{I}$-limits whenever $\ca{C}$ itself has $\ca{I}$-limits. These are computed as in $\dc{S}\nc{pan}(\ca{C})$, since the span induced between limits by a diagram of relations is itself jointly monic. It is also true that $\ca{C}$ having $\ca{I}$-colimits yields parallel $\ca{I}$-colimits in $\dc{R}\nc{el}(\ca{C})$. The latter are constructed by first forming the colimit of underlying spans and then taking the image in $\ca{C}$ of the corresponding morphism into the binary product, in order to yield a jointly monic pair.
\end{ex}

\begin{ex}\label{ex:VMatparallelcocomplete}
  The double category $\VMMat$ has parallel coproducts.
  More generally, if $\ca{V}$ has $\ca{I}$-colimits for some small $\ca{I}$, then $\VMMat$ has parallel $\ca{I}$-colimits \cite[Proposition 3.1.10]{SweedlerDouble}.
\end{ex}

\begin{ex}
The double category $\dc{R}\nc{ing}$ is parallel cocomplete. Regarding coequalizers in $\dc{R}\nc{ing}_1$, consider
  \begin{displaymath}
    \begin{tikzcd}[ampersand replacement=\&,sep=.3in]
      A\ar[r,bul,"M"]\ar[d,"f_1"']\ar[dr,phantom,"\Two{\phi_1}"] \& B\ar[d,"g_1"] \\
      R\ar[r,bul,"N"'] \& S
    \end{tikzcd}\qquad
    \begin{tikzcd}[ampersand replacement=\&,sep=.3in]
      A\ar[r,bul,"M"]\ar[d,"f_2"']\ar[dr,phantom,"\Two{\phi_2}"] \& B\ar[d,"g_2"] \\
      R\ar[r,bul,"N"'] \& S,
    \end{tikzcd}
  \end{displaymath}
  Take the coequalizers
  \begin{tikzcd}A\ar[r,shift left=1ex,"f_1"]\ar[r,shift right=1ex,"f_2"'] & R\ar[r,two heads,"p"] & R/I
  \end{tikzcd} and
  \begin{tikzcd}B\ar[r,shift left=1ex,"g_1"]\ar[r,shift right=1ex,"g_2"'] & S\ar[r,two heads,"q"] & S/J
  \end{tikzcd} in $\nc{Ring}$ and then consider the abelian group $\overline{N}\coloneqq N/(JN+NI+S\rm{Im}(\phi_1-\phi_2)R)$ with the quotient projection $\pi\colon N\to\overline{N}$. It is then easy to check that
  \begin{displaymath}
    \begin{tikzcd}[ampersand replacement=\&,sep=.3in]
      R\ar[r,bul,"N"]\ar[d,"p"']\ar[dr,phantom,"\Two{\pi}"] \& S\ar[d,"q"] \\
      R/I\ar[r,bul,"\overline{N}"'] \& S/J
    \end{tikzcd}
  \end{displaymath}
  is the coequalizer of $(\phi_1,\phi_2)$. Regarding coproducts, for two $M\colon R\bular S$ and $M'\colon R'\bular S'$, consider the coproducts of rings $R+ R'$ and $S+ S'$. Recall that $R+ R'$ is constructed by taking a certain quotient of the tensor algebra $T(R\oplus R')$ on the abelian group $R\oplus R'$. Consider the free $(S+ S',R+ R')$-bimodule $F(M\oplus M')=(S+ S')\otimes_{\mathbb{Z}}(M\oplus M')\otimes_{\mathbb{Z}}(R+ R')$ on the abelian group $M\oplus M'$ and set $M+ M'\coloneq F(M\oplus M')/V$, where $V$ is the sub-bimodule generated by all elements of the following forms:
  \begin{center}
    $
    \begin{cases}
      s\otimes m\otimes 1 - 1\otimes sm\otimes 1, & s\in S,m\in M \\
      s'\otimes m'\otimes 1 - 1\otimes s'm'\otimes 1, & s'\in S',m'\in M' \\
      1\otimes m\otimes r - 1\otimes mr\otimes 1, & r\in R,m\in M \\
      1\otimes m'\otimes r' - 1\otimes m'r'\otimes 1, & r'\in R',m'\in M'
    \end{cases}$
  \end{center}
  Then we have canonical morphisms
  \begin{displaymath}
    \begin{tikzcd}[ampersand replacement=\&,sep=.3in]
      R\ar[r,bul,"M"]\ar[d,"i_R"']\ar[dr,phantom,"\Two{i_M}"] \& S\ar[d,"i_S"] \\
      R+ R'\ar[r,bul,"{M+ M'}"'] \& S+ S'
    \end{tikzcd}\qquad
    \begin{tikzcd}[ampersand replacement=\&,sep=.3in]
      R'\ar[r,bul,"{M'}"]\ar[d,"i_{R'}"']\ar[dr,phantom,"\Two{i_{M'}}"] \& S'\ar[d,"i_{S'}"] \\
      R+ R'\ar[r,bul,"{M+ M'}"'] \& S+ S'
    \end{tikzcd}
  \end{displaymath}
  which, as can be easily verified, constitute a coproduct diagram in $\dc{R}\nc{ing}_1$.
\end{ex}

\begin{ex}
Limits and colimits in $\dc{P}\nc{oly}(\ca{C})$ have only been systematically studied in the case of $\dc{P}\nc{oly}(\nc{Set})$. In particular, it has been shown that $\dc{P}\nc{oly}(\nc{Set})_1$ has all small limits \cite[Thm 5.33]{spivakpolybook} and colimits \cite[Thm 5.43]{spivakpolybook} 
Moreover, the source and target functors $\mathfrak{s,t}$ are right adjoints and are thus limit-preserving so we can deduce that  $\dc{P}\nc{oly}(\nc{Set})$ is parallel complete.
\end{ex}

Below, we recall \cite[Prop.~3.1.11\&~3.1.13]{SweedlerDouble} respectively. We give a short alternative proof for the first one, essentially to recall the well-known fact that (op)fibrations can be seen to strictly preserve a choice of (co)limits when they are (co)continuous in the ordinary sense.

\begin{prop}\label{lem:(co)limits in End(D)}
  Let $\dc{D}$ be a fibrant double category which has parallel $\ca{I}$-(co)limits, for some small category $\ca{I}$. Then $\End(\dc{D})$ has
  $\ca{I}$-(co)limits and the inclusion $\End(\dc{D})\to\dc{D}_1$ creates them.
\end{prop}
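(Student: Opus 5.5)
We treat the limit case; the colimit case is dual, using that $\langle\Gr{s},\Gr{t}\rangle$ is also an opfibration. We use the pullback description of $\End(\dc{D})$ recorded above, i.e.\ $\End(\dc{D})=\Delta^*\dc{D}_1$ along $\Delta\colon\dc{D}_0\to\dc{D}_0\times\dc{D}_0$. Pullbacks of fibrations are fibrations, so the projection $E\colon\End(\dc{D})\to\dc{D}_0$ is a fibration. Its fibre over $X$ is $\dc{D}[X,X]$, and its cartesian liftings are those of $\langle\Gr{s},\Gr{t}\rangle$ along arrows of the form $(f,f)$, i.e.\ restrictions $\wc{f}\odot M\odot\wh{f}$.

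First, given a diagram $D\colon\ca{I}\to\End(\dc{D})$, compose with the inclusion $J\colon\End(\dc{D})\to\dc{D}_1$ and take a limit cone $\lambda\colon L\Rightarrow JD$ in $\dc{D}_1$. Since $\Gr{s}$ and $\Gr{t}$ preserve $\ca{I}$-limits, both $\Gr{s}\lambda$ and $\Gr{t}\lambda$ are limit cones over the same diagram $\Gr{s}JD=\Gr{t}JD=ED$ in $\dc{D}_0$. Hence there is a unique comparison isomorphism $\phi\colon\Gr{t}L\cong\Gr{s}L$ commuting with the cone legs, but $L$ need not be an endo-1-cell on the nose.

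This is the main obstacle: we must replace $L$ by a genuine endo-cell without destroying the limit property. Fibrancy does this. Take the cartesian lifting $\bar L\coloneqq L(\id,\phi)\to L$ of $(\id_{\Gr{s}L},\phi)$. Its target is $\Gr{s}L$, so $\bar L\colon\Gr{s}L\bular\Gr{s}L$ is an object of $\End(\dc{D})$, and it is isomorphic to $L$ in $\dc{D}_1$ because a cartesian lift of an isomorphism is an isomorphism. Transporting $\lambda$ along this isomorphism gives a limit cone $\bar\lambda\colon\bar L\Rightarrow JD$ in $\dc{D}_1$. Each leg $\bar\lambda_i$ has source and target components $\Gr{s}\lambda_i$ and $\Gr{t}\lambda_i\circ\phi^{-1}=\Gr{s}\lambda_i$, which are equal, so every leg lies in $\End(\dc{D})$.

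Next we check that $\bar\lambda$ is a limit in $\End(\dc{D})$. Take a cone from an endo-cell $N\colon Y\bular Y$ whose legs have equal source and target components. It factors uniquely through $\bar\lambda$ in $\dc{D}_1$. The source and target components of the factorisation both factor the cone $\Gr{s}\bar\lambda=\Gr{t}\bar\lambda$, which is a limit in $\dc{D}_0$, so they coincide and the factorisation lies in $\End(\dc{D})$. Uniqueness is inherited from $\dc{D}_1$.

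For creation, we use that $J$ is injective on objects and faithful, that its image is closed under isomorphisms with identity boundary, and the argument above. Any limit cone of $JD$ in $\dc{D}_1$ whose apex and legs lie in $\End(\dc{D})$ is a limit there, by the previous paragraph. Conversely, a lift exists, namely $\bar\lambda$, and it is unique up to the canonical isomorphism. This is the sense in which $J$ creates $\ca{I}$-limits, and the standard cleavage argument lets us choose $\bar\lambda$ strictly if a strict creation statement is desired.

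The colimit case follows verbatim: $\Gr{s},\Gr{t}$ preserve the colimit, and we correct the apex using opcartesian liftings of $(\id,\phi)$.
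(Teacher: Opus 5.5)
Your proposal is correct and is essentially the paper's argument: both use fibrancy of $\langle\Gr{s},\Gr{t}\rangle$ to replace the (co)limit of $JD$ in $\dc{D}_1$ by an isomorphic one whose source and target cones are literally the same (co)limit cone in $\dc{D}_0$. The paper compresses this into the remark that (op)fibrations can be taken to strictly preserve (co)limits, whereas you spell out the apex correction and the universal property inside $\End(\dc{D})$. One small slip: with $\phi\colon\Gr{t}L\to\Gr{s}L$ you want the cartesian lift of $(\id,\phi^{-1})$ (as your leg computation $\Gr{t}\lambda_i\circ\phi^{-1}$ already assumes), and in fact any isomorphic lift of an isomorphism suffices, so only the isofibration property of $\langle\Gr{s},\Gr{t}\rangle$ is really used.
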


\begin{proof}
  Suppose $D:\ca{I}\to \End(\dc{D})$ is a diagram in $\End(\dc{D})$. Since $\dc{D}$ has parallel $\ca{I}$-colimits, $\ca{I}\to \End(\dc{D})\hookrightarrow \dc{D}_1$ has a colimit
  \[
    \begin{tikzcd}
      X_i \arrow[bul, r, "D_i" {name=Di}]\ar[dr,phantom,"\Two q_i"] \arrow[d, "\mathfrak{s}(q_i)"'] & X_i \arrow[d, "\mathfrak{t}(q_i)"] \\
      X \arrow[bul, r, "D"' {name=D}] & Y
    \end{tikzcd}
  \]
   We will show that $D$ and $q_i$ are in $\End(\dc{D})$. Since $\dc{D}$ is fibrant, $\langle \mathfrak{s}, \mathfrak{t}\rangle$ is a bifibration and also projections and compositions of bifibrations are bifibrations \cite[Prop.~8.1.12 \& 13]{Handbook2}. Therefore, both composites
  \[
    \begin{tikzcd}
      \dc{D}_1 \arrow[rr, "{\langle\mathfrak{s},\mathfrak{t}\rangle}"] &  & \dc{D}_0\times \dc{D}_0 \arrow[rr, "\pi_1", shift left] \arrow[rr, "\pi_0"', shift right] &  & \dc{D}_0
    \end{tikzcd}
  \]
  are bifibrations and so they both (strictly) preserve colimits. This means that $\mathfrak{s}(q_i)=\mathfrak{t}(q_i)$. 
\end{proof}

\begin{prop}\label{prop:equivalentdefparcom}
  Let $\dc{D}$ be a fibrant double category, and $\dc{D}_0$ be complete. The following are equivalent:
  \begin{enumerate}[(i),ref=\thedefi(\roman*)]
    \item $\dc{D}$ is parallel complete;
    \item $^{X}\dc{D}_{1}$ and $\dc{D}_{1}^{Z}$ are complete categories for any $X,Z\in\dc{D}_0$ (and $\mi\odot\wh{f}\colon
      ^Y\dc{D}_1\to{}^X\dc{D}_1$ and $\wc{g}\odot\mi\colon\dc{D}_1^W\to\dc{D}_1^Z$ are continuous functors for any $f\colon X\to Y$ and $g\colon Z\to W$);
    \item  $\dcD{X}{Z}$ is a complete category for any $X,Z\in\dc{D}_0$ (and $-\odot\wh{f}\colon \dcD{Y}{Z}\to\dcD{X}{Z}$ and $\wc{g}\odot-\colon \dcD{X}{W}\to\dcD{X}{Z}$ are continuous functors).
  \end{enumerate}
\end{prop}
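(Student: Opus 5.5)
The plan is to view everything through fibrations and to prove (i)$\Rightarrow$(ii)$\Rightarrow$(iii)$\Rightarrow$(i). Since $\dc{D}$ is fibrant, $\langle\Gr{s},\Gr{t}\rangle\colon\dc{D}_1\to\dc{D}_0\times\dc{D}_0$ is a bifibration. Composing with the projections, as in the proof of \cref{lem:(co)limits in End(D)}, makes $\Gr{s}$ and $\Gr{t}$ separately bifibrations. The fibres are identified as follows:
\begin{itemize}
\item the fibre of $\Gr{s}$ over $X$ is ${}^X\dc{D}_1$;
\item the fibre of $\Gr{t}$ over $Z$ is $\dc{D}_1^Z$;
\item the fibre of $\langle\Gr{s},\Gr{t}\rangle$ over $(X,Z)$ is $\dcD{X}{Z}$.
\end{itemize}
I will also compute the reindexing functors using the bijections between 2-cells and globular 2-cells involving companions and conjoints. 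Reindexing along $f$ in the source variable is $-\odot\wh{f}$, and reindexing along $g$ in the target variable is $\wc{g}\odot-$. Reindexing along $(f,g)$ for $\langle\Gr{s},\Gr{t}\rangle$ is $\wc{g}\odot-\odot\wh{f}$.

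The engine is the standard result on limits in fibrations (e.g. \cite[\S8.5]{Handbook2} or \cite[\S5.3]{PhDChristina}). Let $P\colon\ca{A}\to\ca{C}$ be a fibration with $\ca{C}$ complete. Then $\ca{A}$ has limits of shape $\ca{I}$ and $P$ strictly preserves them if and only if each fibre has $\ca{I}$-limits and each reindexing functor preserves them. In the forward direction, a cone in the fibre over $C$ is a limit there exactly when it is a limit in $\ca{A}$, because fibre inclusions of a fibration preserve limits that $P$ preserves. Reindexing $f^*$ preserves limits because it is right adjoint to $f_!$, using that $P$ is also an opfibration; without the opfibration one gets this from the universal property of cartesian lifts. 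In the backward direction, one takes the limit $C$ of $PD$ in $\ca{C}$, reindexes the diagram into the fibre over $C$ along the limit projections, takes the limit there, and checks the universal property using cartesianness and continuity of reindexing.

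With this in hand, the implications go as follows. For (i)$\Rightarrow$(ii), parallel completeness says $\dc{D}_1$ is complete and $\Gr{s},\Gr{t}$ preserve limits. Applying the forward direction to the fibration $\Gr{s}$ gives completeness of ${}^X\dc{D}_1$ and continuity of $-\odot\wh{f}$. Applying it to $\Gr{t}$ gives the statements about $\dc{D}_1^Z$ and $\wc{g}\odot-$.

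For (ii)$\Rightarrow$(iii), $\dcD{X}{Z}$ is the fibre over $Z$ of the restricted fibration ${}^X\dc{D}_1\to\dc{D}_0$ given by $\Gr{t}$. Its reindexing is $\wc{g}\odot-$, which is continuous: limits in ${}^X\dc{D}_1$ and in $\dc{D}_1^W$ both agree with limits in $\dc{D}_1$ via the previous step, and $\wc{g}\odot-$ preserves those. Applying the forward direction again yields completeness of $\dcD{X}{Z}$. The same argument on the other side gives continuity of $-\odot\wh{f}$ between hom-categories.

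For (iii)$\Rightarrow$(i), the backward direction is applied to $\langle\Gr{s},\Gr{t}\rangle$ over the complete base $\dc{D}_0\times\dc{D}_0$. This needs reindexing $\wc{g}\odot-\odot\wh{f}$ to be continuous, which holds as a composite of continuous functors. The result is that $\dc{D}_1$ is complete and $\langle\Gr{s},\Gr{t}\rangle$ preserves limits, hence so do $\Gr{s}$ and $\Gr{t}$ after projecting.

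The main obstacle is making the backward direction precise: in particular, showing that the cone obtained from the fibre limit is universal in $\dc{D}_1$ against cones with arbitrary apex lying over other objects. I would handle this by factoring each comparison map through a cartesian lift and using continuity of reindexing. The remaining risk is bookkeeping of the pseudo-functoriality isomorphisms $\wh{gf}\cong\wh{g}\odot\wh{f}$, which I would avoid by working only with cartesian-lift universal properties rather than with chosen reindexings.
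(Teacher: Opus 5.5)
Your setup is sound. The functors $\Gr{s}$, $\Gr{t}$ and $\langle\Gr{s},\Gr{t}\rangle$ are bifibrations with fibres ${}^X\dc{D}_1$, $\dc{D}_1^Z$ and $\dcD{X}{Z}$, and reindexings $\mi\odot\wh{f}$, $\wc{g}\odot\mi$ and $\wc{g}\odot\mi\odot\wh{f}$. These reindexings are right adjoints, which is why the bracketed continuity conditions are redundant (cf.\ \cref{rmk:term_conv}). Your (i)$\Rightarrow$(ii) and (iii)$\Rightarrow$(i) are correct once the fibred-limits theorem is invoked. The paper itself gives no argument here and simply cites \cite[Prop.~3.1.13]{SweedlerDouble}.

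However, your explanation of the forward direction of that theorem is false. A fibre inclusion only preserves \emph{connected} limits that $P$ preserves, which is exactly the warning in \cref{rem:connected}. For instance, take objects $M,M'$ of ${}^X\dc{D}_1$ with targets $Y,Y'$. Their product in $\dc{D}_1$ has source $X\times X$, while their product in ${}^X\dc{D}_1$ is its reindexing $(M\times M')\odot\wh{\Delta_X}$ along the diagonal. The latter is in general not a product in $\dc{D}_1$: cones whose legs have different source maps $A\to X$ cannot factor through it. The forward direction is still true, but its proof must reindex the total limit along the canonical map from $C$ to the limit of the constant $\ca{I}$-diagram at $C$. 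Since completeness includes products and terminal objects, this is not cosmetic.

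The genuine gap is (ii)$\Rightarrow$(iii). To get completeness of $\dcD{X}{Z}$ from the forward direction applied to the restricted fibration $\Gr{t}\colon{}^X\dc{D}_1\to\dc{D}_0$, you need this functor to \emph{preserve} limits. That is what controls the target of the ${}^X\dc{D}_1$-limit of a diagram in $\dcD{X}{Z}$, so that it can be reindexed back into the fibre over $Z$. You never establish this.

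What you check instead, continuity of $\wc{g}\odot\mi$, is a \emph{conclusion} of the forward direction, and is automatic anyway. Your argument for it also has two faults:
\begin{enumerate}
\item it appeals to limits in $\dc{D}_1$ ``via the previous step'', i.e.\ to (i), which is not available when you start from (ii);
\item it rests on the false identification of fibre limits with total limits.
\end{enumerate}

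The fix is to reorganise the argument:
\begin{enumerate}
\item From (ii), apply the backward direction separately to the fibrations $\Gr{s}$ and $\Gr{t}$. Their fibres ${}^X\dc{D}_1$, resp.\ $\dc{D}_1^Z$, are complete with continuous reindexing over the complete $\dc{D}_0$. This gives $\dc{D}_1$ complete with $\Gr{s}$ and $\Gr{t}$ continuous, which is (i).
\item Then apply the forward direction to $\langle\Gr{s},\Gr{t}\rangle$, which preserves limits since its components do, to get (iii).
\end{enumerate}
In other words, prove (i)$\Leftrightarrow$(ii) via $\Gr{s},\Gr{t}$ and (i)$\Leftrightarrow$(iii) via $\langle\Gr{s},\Gr{t}\rangle$.
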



We now recall the following terminology from \cite[Lem. 2.4.1]{CommutingTensorProducts}.

\begin{defi}\label{def: stable local}
  For a double category $\dc{D}$ and a small category $\ca{I}$, we say that $\dc{D}$ has \emph{local $\ca{I}$-(co)limits} if $\dcD{X}{Y}$ has $\ca{I}$-(co)limits for any $X,Y\in\dc{D}_0$. Such (co)limits will be called \emph{stable} if they are preserved by the functors $M\odot-\colon\dcD{Z}{X}\to\dcD{Z}{Y}$ and $-\odot M\colon\dcD{Y}{Z}\to\dcD{X}{Z}$ for any $M\colon X\bular Y$ and $Z\in\dc{D}_0$.
\end{defi}

\begin{rmk}\label{rmk:term_conv}
Observe that a double category with parallel (co)limits of some given type also has local colimits of that type. However, stability of such colimits is a strictly stronger requirement. In the converse direction, local (co)limits can be used to obtain parallel such (co)limits. In fact, in \cref{prop:equivalentdefparcom} the continuity conditions for $\mi\odot\wh{f}$ and $\wc{g}\odot\mi$ can be dropped; hence, it follows that in the fibrant setting existence of (co)limits in $\dc{D}_0$ together with local (co)limits in $\dc{D}$ imply existence of parallel (co)limits in $\dc{D}$, see also \cite[Lem.~2.4.2]{CommutingTensorProducts}. 

Because of this fact and in order to ease the statement of the assumptions in results throughout the paper, we make the the following convention: whenever we say that a double category $\dc{D}$ \emph{has stable local $\ca{I}$-colimits}, this will include the assumption that $\dc{D}_0$ has $\ca{I}$-colimits. Thus, having stable local colimits of a given kind will in particular imply the existence of parallel such colimits.
\end{rmk}

\begin{ex}\label{ex:SpanVMatstable}
The local $\ca{I}$-colimits in $\dc{S}\nc{pan}(\ca{C})$ will be stable precisely when the $\ca{I}$-colimits in $\ca{C}$ are \emph{universal} (=pullback-stable). The same remains true for $\dc{R}\nc{el}(\ca{C})$, 
since the extra step in the construction of colimits involves taking a (regular epi, mono) factorization and the latter are also stable under pullback.
	
The double category $\VMMat$ has local $\ca{I}$-colimits when $\ca{V}$ has, since $\VMMat[X,Y]=[Y\times X,\ca{V}]$. It will have stable local $\ca{I}$-colimits if the tensor product preserves $\ca{I}$-colimits in each variable. In particular, if $\ca{V}$ is cocomplete and monoidal closed, then it has all stable local colimits.
\end{ex}

We end this subsection by discussing a notion of local presentability for fibrant double categories investigated in \cite[\S~3.4]{SweedlerDouble}.

\begin{defi}\label{defi:lp}
  A fibrant double category $\dc{D}$ will be called \emph{locally $\lambda$-presentable}, for a regular cardinal $\lambda$, if both categories $\dc{D}_0$, $\dc{D}_1$, are locally $\lambda$-presentable, the functors $\mathfrak{s},\mathfrak{t}\colon\dc{D}_1\to\dc{D}_0$ are cocontinuous right adjoints and, for every horizontal 1-cell $M:X\bular Y$, the functors $-\odot M:\, ^Y\dc{D}_1\to \, ^X\dc{D}_1$ and $M\odot -: \dc{D}_1^X\to \dc{D}_1^Y$ are accessible. $\dc{D}$ will be called \emph{locally presentable} if it is locally $\lambda$-presentable for some regular cardinal $\lambda$.
\end{defi}

\begin{rmk}\label{rmk:lpstable}
By \cite[Rmk. 3.4.6]{SweedlerDouble}, it is the case that the requirement that the functors $-\odot M:\, ^Y\dc{D}_1\to \, ^X\dc{D}_1$ and $M\odot -\colon \dc{D}_1^X\to \dc{D}_1^Y$ are accessible for any $M:X\bular Y$ is in fact equivalent to the analogous functors between fixed-domain \emph{and} codomain subcategories being accessible, namely $\dc{D}$ having \emph{stable local filtered colimits}, and of course stronger than just asking that $\dc{D}$ has pararallel filtered colimits (\cref{rmk:term_conv}). It is also clear that every locally presentable double category is parallel complete and cocomplete.
\end{rmk}

\begin{ex}\label{ex:lpdouble}
For $\ca{V}$ a locally presentable and monoidal category, where $\otimes$ preserves filtered colimits in each variable, $\VMMat$ is a locally presentable double category, as established in \cite[Prop.~3.4.8]{SweedlerDouble}.

Moreover, $\dc{R}\nc{el}(\Set)$ is locally presentable by \cite[Ex.~3.4.7]{SweedlerDouble}, and $\dc{S}\nc{pan}(\ca{C})$ is locally presentable when $\ca{C}$ itself is locally presentable by \cite[Ex.~3.4.8]{SweedlerDouble}.
\end{ex} 

\begin{cor}\cite[Prop.~3.4.4 and 3.4.9]{SweedlerDouble}\label{thm:End(D) lp}
  Suppose that $\dc{D}$ is a locally presentable double category. Then for every $X,Y\in\dc{D}_0$, the categories $^X\dc{D}_1$, $\dc{D}_1^Y$ and $\dcD{X}{Y}$ are locally presentable, as well as $\End(\dc{D})$.
\end{cor}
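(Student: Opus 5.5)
We would prove the four claims in the order $^X\dc{D}_1$, $\dc{D}_1^Y$, $\dcD{X}{Y}$, $\End(\dc{D})$. Throughout we use the standard fact that locally presentable categories are closed under limits of accessible functors, in particular pseudo/strict pullbacks and equalizers along accessible functors preserving $\lambda$-filtered colimits (Makkai--Par\'e / Ad\'amek--Rosick\'y, Thm.~2.72 and its consequence for inserters/equifiers). Fix $\lambda$ witnessing local $\lambda$-presentability of $\dc{D}$; by raising $\lambda$ we may assume $\mathfrak{s},\mathfrak{t}$ preserve $\lambda$-filtered colimits and are right adjoints between locally $\lambda$-presentable categories.

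For $^X\dc{D}_1$, we exhibit it as the fibre of $\mathfrak{s}\colon\dc{D}_1\to\dc{D}_0$ over $X$. Its objects are the $M$ with $\mathfrak{s}M=X$, and its morphisms are the 2-cells with $\mathfrak{s}$-component the identity. So it is the strict pullback of $\mathfrak{s}$ along $X\colon\B{1}\to\dc{D}_0$. Since $\mathfrak{s}$ is a fibration, and in particular an isofibration, this strict pullback agrees with the pseudo-pullback. All the categories involved are locally presentable and all the functors are accessible (both $\mathfrak{s}$ and the functor from $\B{1}$), so the pullback is locally presentable. The case of $\dc{D}_1^Y$ is identical with $\mathfrak{t}$.

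For $\dcD{X}{Y}$, we pull back $\langle\mathfrak{s},\mathfrak{t}\rangle\colon\dc{D}_1\to\dc{D}_0\times\dc{D}_0$ along $(X,Y)\colon\B{1}\to\dc{D}_0\times\dc{D}_0$. The product $\dc{D}_0\times\dc{D}_0$ is locally presentable, and $\langle\mathfrak{s},\mathfrak{t}\rangle$ is accessible and a fibration, hence an isofibration, by fibrancy. So the same argument applies.

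For $\End(\dc{D})$, we use the pullback square displayed in the paper, along $\Delta\colon\dc{D}_0\to\dc{D}_0\times\dc{D}_0$. Here $\Delta$ is accessible, indeed a right and left adjoint, and $\langle\mathfrak{s},\mathfrak{t}\rangle$ is an isofibration, so the strict pullback is again a bicategorical limit of accessible functors between locally presentable categories. Alternatively, $\End(\dc{D})$ is the equalizer of $\mathfrak{s},\mathfrak{t}$. By \cref{lem:(co)limits in End(D)} it is cocomplete, so it suffices to know it is accessible.

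The main obstacle is making sure the strict pullbacks really are the limits covered by the limit theorem for locally presentable categories, which is stated for pseudo/bilimits in $\mathsf{ACC}$. The isofibration property of $\mathfrak{s}$, $\mathfrak{t}$ and $\langle\mathfrak{s},\mathfrak{t}\rangle$, coming from fibrancy, is exactly what identifies the two notions. Alternatively, we could verify cocompleteness of each category directly, via \cref{lem:(co)limits in End(D)} and its analogue for fibres, where colimits are computed in $\dc{D}_1$ because fibrations strictly preserve them. We would then obtain a strong generator from the $\lambda$-presentable objects of $\dc{D}_1$ by applying the left adjoints to the inclusions of the fibres, given by \cref{thm:totaladjointthm} and its dual.
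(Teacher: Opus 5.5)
Your strategy is sound in outline. Realising ${}^X\dc{D}_1$, $\dc{D}_1^Y$, $\dcD{X}{Y}$ and $\End(\dc{D})$ as strict pullbacks along the isofibrations $\mathfrak{s}$, $\mathfrak{t}$, $\langle\mathfrak{s},\mathfrak{t}\rangle$ makes them pseudo-pullbacks. For $\End(\dc{D})$ your argument is complete, because you also supply cocompleteness via \cref{lem:(co)limits in End(D)}; in any case both $\Delta$ and $\langle\mathfrak{s},\mathfrak{t}\rangle$ are cocontinuous.

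The gap is in the three fibre categories. The limit theorem you invoke concerns accessible categories and accessible functors, so it only shows that the pseudo-pullback is \emph{accessible}. The version that preserves local presentability needs the functors to be cocontinuous, or accessible right adjoints. The functor $X\colon\B{1}\to\dc{D}_0$ is neither, unless $X$ is initial, respectively terminal. The inference ``a pseudo-pullback of locally presentable categories along accessible functors is locally presentable'' is in fact false: the two functors $\B{1}\to\Set$ picking $\emptyset$ and $\{*\}$ are accessible, yet their pseudo-pullback is empty.

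Your fallback does not close the gap. Colimits in ${}^X\dc{D}_1$ are computed in $\dc{D}_1$ only when they are connected (\cref{rem:connected}). For example, the initial object of $\dc{D}_1$ has initial source, so it does not lie in ${}^X\dc{D}_1$ unless $X$ is initial. Also, the fibre inclusions have no left adjoints in general: if $\dc{D}_0(\mathfrak{s}M,X)=\emptyset$, then $M$ admits no map into the fibre. The fibre inclusion is not a fibred functor over a common base, so \cref{thm:totaladjointthm} does not apply to it.

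What you need is a separate proof that the fibres are cocomplete (or complete); accessible plus cocomplete then gives locally presentable. This follows from fibrancy. The functor $\mathfrak{s}=\pi_0\langle\mathfrak{s},\mathfrak{t}\rangle$ is an opfibration and is cocontinuous. Given $D\colon\ca{I}\to{}^X\dc{D}_1$, form its colimit $L$ in $\dc{D}_1$, so that $\mathfrak{s}L\cong\coprod_{\pi_0\ca{I}}X$. The opcartesian lift $L\to\nabla_!L$ of the codiagonal $\nabla\colon\coprod_{\pi_0\ca{I}}X\to X$ then exhibits $\nabla_!L$ as the colimit of $D$ in the fibre. Indeed, any cocone in the fibre induces a map $L\to Z$ in $\dc{D}_1$ lying over $\nabla$, and this map factors uniquely through the opcartesian arrow by a map over $\mathrm{id}_X$.

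Dually, cartesian lifts along the diagonal $X\to\prod_{\pi_0\ca{I}}X$, together with continuity of $\mathfrak{s}$, give completeness. The same argument handles $\dc{D}_1^Y$ and $\dcD{X}{Y}$, using $\mathfrak{t}$ and $\langle\mathfrak{s},\mathfrak{t}\rangle$. With this step added, your proof goes through.
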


\begin{rmk}\label{rmk: on lp defi}
As remarked already in \cite[\S~3.4]{SweedlerDouble}, our running notion of local presentability is not a proper 2-dimensional one. It was initially chosen with an eye towards establishing results in loc. cit., where in particular the various fibers  $^X\dc{D}_1$, $\dc{D}_1^Y$ and $\dcD{X}{Y}$ being locally presentable was a property needed in order to establish the existence of certain adjoint functors. At the same time, this set of conditions could indeed be verified in the examples of interest. In attempting to strike a balance between these two needs, one could of course formulate variations, either weaker or stronger, depending on the circumstances and goals.

In the first direction, one could ostensibly drop the requirement that $\dc{D}_1$ be locally presentable and instead directly require that all categories $^X\dc{D}_1$, $\dc{D}_1^Y$ be locally presentable, thus producing a definition which is essentially a \emph{fiberwise} notion of local presentability of $\dc{D}$. Indeed, we are often eventually working in some fiber of $\dc{D}$ and the fibers are also generally easier to handle, while on the other hand the category $\dc{D}_1$ could be ``too big'' to be locally presentable. However, with this weakening we clearly also lose some potentially desirable properties, such as $\End(\dc{D})$ being locally presentable.

In the other direction, one could consider a definition which is more along the lines of postulating that $\dc{D}$ is an internal category in an appropriate category of locally presentable categories. Explicitly, this would additionally involve the requirements that the unit $1\colon\dc{D}_0\to\dc{D}_1$ and horizontal composition $-\odot-\colon\dc{D}_1\times_{\dc{D}_0}\dc{D}_1\to\dc{D}_1$ preserve filtered colimits. Indeed, we will have to impose a weaker version of these in our main \cref{thm:Mndlp,thm:Cmdlp} below, where horizontal composition is assumed to preserve filtered colimits merely as a functor $-\odot-\colon\End(\dc{D})\to\End(\dc{D})$. In passing, let us note that it is not too hard to see that the stronger definition would indeed imply both this latter condition as well as the conditions in \cref{defi:lp}.

For our current purposes, we keep the formulation of \cref{defi:lp} for local presentability of double categories, asking for different conditions when they are needed and not implied (e.g. \cref{MndColimits}).
\end{rmk}

\cref{thm:End(D) lp} establishes that the category of horizontal endo-1-cells is locally presentable, whereas one of the main goals of this work is to establish that moreover, the categories of monads and comonads are also locally presentable. We now turn to these categories.

\subsection*{Monads and comonads}

Let us now recall the basic structures that this work investigates, namely monads and comonads in double categories.

\begin{defi}\label{Monadindoublecat}
  A \emph{monad} in a double category $\dc{D}$ is a horizontal endo-1-cell $A\colon X\bular X$ equipped with
  \begin{displaymath}
    \begin{tikzcd}[sep=.3in]
      X\ar[r,bul,"A"]\ar[d,equal]\ar[drr,phantom,"\Two\mu"] & X\ar[r,bul,"A"] & X\ar[d,equal] \\
      X\ar[rr,bul,"A"'] && X
    \end{tikzcd}\qquad
    \globtwocell{X}{1_X}{X}{X}{A}{X}{\eta}
  \end{displaymath}
  satisfying the usual associativity
  \[
    \begin{tikzcd}[sep=.3in]
      X \arrow[d, equal] \arrow[r, bul, "A\odot A"] \arrow[dr, phantom, "\Two\mu"] & X \arrow[r, bul, "A"] \arrow[d, equal] \arrow[dr, phantom, "\Two 1_A"] & X \arrow[d, equal] \\
      X \arrow[d, equal] \arrow[r, bul, "A"] \arrow[drr, phantom, "\Two\mu"] & X \arrow[r, bul, "A"] & X \arrow[d, equal]\\
      X \arrow[rr, bul, "A"'] & & X
    \end{tikzcd}=
    \begin{tikzcd}[sep=.3in]
      X \arrow[d, equal] \arrow[r, bul, "A\odot A"] & X \arrow[r, bul, "A"] \arrow[d, phantom, "\Two a"] & X \arrow[d, equal] \\
      X \arrow[d, equal] \arrow[r, bul, "A"] \arrow[dr, phantom, "\Two 1_A"] & X \arrow[d, equal] \arrow[r, bul, "A\odot A"] \arrow[dr, phantom, "\Two \mu"] & X \arrow[d, equal] \\
      X \arrow[drr, phantom, "\Two\mu"] \arrow[d, equal] \arrow[r, bul, "A"] & X \arrow[r, bul, "A"] & X \arrow[d, equal] \\
      X \arrow[rr, bul, "A"'] & & X
    \end{tikzcd}
    \]
    and unit laws
    \[\begin{tikzcd}[sep=.3in]
    X \arrow[r, bul, "1_X"] \arrow[d, equal] \arrow[dr, phantom, "\Two\eta"] & X \arrow[d, equal] \arrow[r, bul, "1_X"] \arrow[dr, phantom, "\Two 1_{\id_X}"] & X \arrow[d, equal]\\ 
    X \arrow[d, equal] \arrow[r, bul, "A"] \arrow[drr, phantom, "\Two\mu"] & X \arrow[r, bul, "A"] & X \arrow[d, equal]\\ 
    X \arrow[rr, bul, "A"] & & X
    \end{tikzcd}=
    \begin{tikzcd}[sep=.3in]
    X \arrow[d, equal] \arrow[r, bul, "1_X"] \arrow[drr, phantom, "\Two\ell"] & X \arrow[r, bul, "A"] & X \arrow[d, equal] \\ 
    X \arrow[rr, bul, "A"] && X,
    \end{tikzcd} \quad
    \begin{tikzcd}[sep=.3in]
    X \arrow[r, bul, "1_X"] \arrow[d, equal] \arrow[dr, phantom, "\Two 1_{\id_X}"] & X \arrow[d, equal] \arrow[r, bul, "1_X"] \arrow[dr, phantom, "\Two\eta"] & X \arrow[d, equal]\\ 
    X \arrow[d, equal] \arrow[r, bul, "A"] \arrow[drr, phantom, "\Two\mu"] & X \arrow[r, bul, "A"] & X \arrow[d, equal]\\ 
    X \arrow[rr, bul, "A"] & & X
    \end{tikzcd}=
    \begin{tikzcd}[sep=.3in]
    X \arrow[d, equal] \arrow[r, bul, "A"] \arrow[drr, phantom, "\Two r"] & X \arrow[r, bul, "1_X"] & X \arrow[d, equal] \\ 
    X \arrow[rr, bul, "A"] && X.
    \end{tikzcd}
  \]
  A \emph{monad morphism} is a 2-morphism ${}^f\alpha^f\colon A\Rightarrow B$ where
  \begin{equation}\label{monadhom}
    \begin{tikzcd}[sep=.3in]
      X\ar[r,bul,"A"]\ar[d,"f"']\ar[dr,phantom,"\Two\alpha"] & X\ar[r,bul,"A"]\ar[d,"f"]\ar[dr,phantom,"\Two\alpha"] & X\ar[d,"f"] \\
      Y\ar[r,bul,"B"'] \ar[d,equal]\ar[drr,phantom,"\Two\mu"] & Y\ar[r,bul,"B"'] & Y\ar[d,equal] \\
      Y\ar[rr,"B"'] && Y
    \end{tikzcd}=
    \begin{tikzcd}[sep=.3in]
      X\ar[r,bul,"A"]\ar[d,equal]\ar[drr,phantom,"\Two\mu"] & X\ar[r,bul,"A"] & X\ar[d,equal] \\
      X\ar[rr,bul,"A"'] \ar[d,"f"']\ar[drr,phantom,"\Two\alpha"] && X\ar[d,"f"] \\
      Y\ar[rr,"B"'] && Y,
    \end{tikzcd}\qquad\qquad
    \begin{tikzcd}[sep=.3in]
      X\ar[r,bul,"1_X"]\ar[d,equal]\ar[dr,phantom,"\Two\eta"] & X\ar[d,equal] \\
      X\ar[r,bul,"A"']\ar[d,"f"']\ar[dr,phantom,"\Two\alpha"] & X\ar[d,"f"] \\
      Y\ar[r,bul,"B"'] & Y
    \end{tikzcd}=
    \begin{tikzcd}[sep=.3in]
      X\ar[r,bul,"1_X"]\ar[d,"f"']\ar[dr,phantom,"\Two1_f"] & X\ar[d,"f"] \\
      Y\ar[r,bul,"1_Y"']\ar[d,equal]\ar[dr,phantom,"\Two\eta"] & Y\ar[d,equal] \\
      Y\ar[r,bul,"B"'] & Y.
    \end{tikzcd}
  \end{equation}
  Dually, a \emph{comonad} in $\dc{D}$ is an endo-1-cell $C\colon Z\bular Z$ equipped with globular 2-morphisms $\delta\colon C\Rightarrow C\odot
  C$ and $\epsilon\colon C\Rightarrow1_U$
  satisfying the usual coassociativity and counit axioms.
  A \emph{comonad morphism} is a 2-morphism ${}^f\alpha^f\colon C\Rightarrow D$
  respecting the comultiplications and counits.
\end{defi}

We obtain a category $\Mnd(\dc{D})$, which is in fact the vertical category of a double category of monads introduced in
\cite[Def.~2.4]{Monadsindoublecats}, as well as a category $\Cmd(\dc{D})$. These are both subcategories of $\End(\dc{D})$ since vertical boundary maps of (co)monad maps coincide.

\begin{ex}\label{ex:spanmonads}
It is well-known that a monad in $\dc{S}\nc{pan}(\ca{C})$ is a category internal to $\ca{C}$. It consists of a horizontal endo-1-cell, i.e. a span of the form $X_0\overset{s}{\leftarrow}X_1\overset{t}{\to} X_0$; $X_0$ will be the intended object of objects and $X_1$ the intended object of arrows; the left and right arrows $s$ and $t$ are the source and target respectively. Furthermore, it comes equipped with a composition $\mu:X_1\times_{X_0} X_1\to X_1$ for compatible arrows which is assocative and a unit $1:X_0\to X_1$ which assigns the identity arrow to every object. The axioms for the unit imply that it is a neutral element for the composition. Finally, a monad morphism then becomes precisely an internal functor, so that $\Mnd(\dc{S}\nc{pan}(\ca{C}))$ is $\nc{Cat}(\ca{C})$. 

  A comonad on the other hand is just an arrow in $\ca{C}$. Indeed, for a horizontal endo-1-cell $C:X\overset{f}{\leftarrow}Y\overset{g}{\to} X$, the existence of a counit $\varepsilon:C\Rightarrow 1_X$ implies that $f=g=\varepsilon$ while the existence of a comultiplication imply that $\Delta$ is just the diagonal; the axioms then become trivial equations.
\end{ex}

\begin{ex}
A monad in $\dc{R}\nc{el}(\ca{C})$ is a reflexive and transitive internal relation, i.e. an internal preorder. Indeed, the existence of 2-cells $\eta:1_X\to R$ and $\mu:R\odot R\to R$ describes the inclusions $\Delta_X\subseteq R$ and $RR\subseteq R$, respectively. In addition, a monad morphism is clearly just an order preserving morphism between internal preorders, so that $\Mnd(\dc{R}\nc{el}(\ca{C}))$ is $\nc{Preord}(\ca{C})$.

  On the other hand, a comonad $C:X\bular X$ is just a subobject of $X$ in $\ca{C}$. The existence of a counit $\varepsilon :C\Rightarrow 1_X$ means that $C\subseteq \Delta_X$, which implies that the two legs of the span representing $C$ are equal. 
\end{ex}

\begin{ex}\label{ex:cocat}
A monad in $\VMMat$ is a category enriched in $\ca{V}$: it is a $\ca{V}$-matrix $A\colon X\tickar X$ namely a family $\{A(x',x)\}$ of objects in $\ca{V}$, together with morphisms $A(x',x'')\ot A(x,x')\to A(x,x'')$ and $I\to A(x,x')$ in $\ca{V}$ satisfying the usual associativity and unitality conditions. Moreover, a monad map in $\VMMat$ is precisely a $\ca{V}$-functor, capturing $\VCat$ as $\Mnd(\VMMat)$. Dually, the category of comonads in matrices is $\ca{V}$-$\mathsf{Cocat}$ of $\ca{V}$-\emph{cocategories} and $\ca{V}$-\emph{cofunctors}: for more details, see \cite{VCocats}.
\end{ex}

\begin{ex}\label{ex:mndBim}
  $\Mnd(\dc{R}\nc{ing})$ is a global category $\Alg$ of algebras over all rings. In more detail, a monad is an $(R,R)$-bimodule $M$ for some ring $R$ together with two $(R,R)$-bimodule homomorphisms $\mu:M\otimes_R M\to M$ and $\eta:R\to M$. The axioms for $\mu$ and $\eta$ imply that $\mu$ is an associative binary operation on $M$ and $\eta(1_R)$ is a unit. Dually, $\Cmd(\dc{R}\nc{ing})$ is a global category of coalgebras over all rings.
\end{ex}

\begin{ex}
For the case where $\ca{C}=\nc{Set}$, the slice categories of $\Mnd(\dc{P}\nc{oly}(\nc{Set}))$ over some polynomial monad $P$ give  $P$-multicategories \cite[Cor. 5.17]{GambinoKockPoly}. In particular, the slice over the monoid monad $M$ is the category of coloured non-symmetric operads and the slice over the identity monad give small categories. 
\end{ex}

In the fibrant setting, categories of horizontal endo-1-cells, monads and comonads obtain certain (op)fibration structures over $\dc{D}_0$, see \cite[Prop.~3.3]{Monadsindoublecats} or
\cite[Prop. 3.3.4]{SweedlerDouble}.

\begin{prop}\label{prop:MonComonfibred}
  Suppose $\dc{D}$ is a fibrant double category. Then $\End(\dc{D})$ is bifibred over $\dc{D}_0$, $\Mnd(\dc{D})$ is fibred over $\dc{D}_0$ and $\Cmd(\dc{D})$ is opfibred over $\dc{D}_0$.
\end{prop}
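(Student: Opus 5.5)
We treat the three claims in turn, using the companion/conjoint calculus of the fibrant double category $\dc{D}$. For $\End(\dc{D})$, recall that it is the pullback of $\langle\Gr{s},\Gr{t}\rangle\colon\dc{D}_1\to\dc{D}_0\times\dc{D}_0$ along the diagonal $\Delta\colon\dc{D}_0\to\dc{D}_0\times\dc{D}_0$. Since $\dc{D}$ is fibrant, $\langle\Gr{s},\Gr{t}\rangle$ is both a fibration and an opfibration. Fibrations and opfibrations are stable under pullback, so the projection $\End(\dc{D})\to\dc{D}_0$ is a bifibration. Explicitly, for $f\colon X\to Y$ the cartesian lift at an endo-1-cell $B\colon Y\bular Y$ is the restriction $f^*B=\wc{f}\odot B\odot\wh{f}\colon X\bular X$, with the 2-cell ${}^f(f^*B)^f\Rightarrow B$ built from $p_1$ and $q_1$. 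Dually, the opcartesian lift at $A\colon X\bular X$ is the extension $f_!A=\wh{f}\odot A\odot\wc{f}$, with the 2-cell built from $p_2$ and $q_2$.

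For $\Mnd(\dc{D})$ we show that the cartesian lifts restrict. Given a monad $(B,\mu,\eta)$ on $Y$, we equip $f^*B$ with a monad structure. The multiplication is the composite
\[
\wc{f}\odot B\odot\wh{f}\odot\wc{f}\odot B\odot\wh{f}\Rightarrow\wc{f}\odot B\odot B\odot\wh{f}\Rightarrow\wc{f}\odot B\odot\wh{f},
\]
whose first step is the counit $\wh{f}\odot\wc{f}\Rightarrow 1_Y$ of the adjunction $\wh{f}\dashv\wc{f}$ in $\ca{H}(\dc{D})$, obtained by pasting $p_1$ and $q_1$, and whose second step is $\mu$. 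The unit is $1_X\Rightarrow\wc{f}\odot\wh{f}\Rightarrow\wc{f}\odot B\odot\wh{f}$, using the adjunction unit (from $p_2,q_2$) followed by $\eta$. The monad axioms follow from the triangle identities together with the axioms for $B$; this is the standard fact that a monad restricts along a map adjunction.

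Next we verify that the cartesian 2-cell $\kappa\colon f^*B\Rightarrow B$ is a monad morphism, i.e.\ that it satisfies \eqref{monadhom}. For the universal property, take a monad morphism ${}^{fg}\alpha^{fg}\colon A\Rightarrow B$, with $g\colon Z\to X$. In $\End(\dc{D})$ it factors uniquely as $\kappa\circ\beta$ with $\beta$ lying over $g$. It remains to check that $\beta$ preserves multiplication and unit. Both sides of each required equation are 2-cells into $f^*B$ over $g$. Hence, by uniqueness of factorisation through the cartesian $\kappa$, it suffices to compare them after composing with $\kappa$, and there the equations hold because $\alpha$ and $\kappa$ are monad morphisms. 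So cartesian morphisms of $\End(\dc{D})$ with monad codomain lift to $\Mnd(\dc{D})$, and $\Mnd(\dc{D})\to\dc{D}_0$ is a fibration. The statement for $\Cmd(\dc{D})$ is formally dual: a comonad structure on $A\colon X\bular X$ transports to $f_!A=\wh{f}\odot A\odot\wc{f}$ using the unit $1_X\Rightarrow\wc{f}\odot\wh{f}$, with comultiplication $\wh{f}\odot A\odot\wc{f}\Rightarrow\wh{f}\odot A\odot A\odot\wc{f}\Rightarrow\wh{f}\odot A\odot\wc{f}\odot\wh{f}\odot A\odot\wc{f}$, and the counit is built from the counit of $\wh{f}\dashv\wc{f}$. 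The same uniqueness argument, now with opcartesian factorisation, shows these lifts are opcartesian in $\Cmd(\dc{D})$.

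We expect the main obstacle to be the verification that the cartesian 2-cell $\kappa$ is a monad morphism. This is a pasting computation in which the non-globular cells $p_1,q_1$ must be matched against the globular counit. Our plan is to use the bijection between cells ${}^f\alpha^g$ and globular cells $\wc{f}\odot M\odot\wh{g}$-type composites recalled after the definition of fibrant double categories. This reduces everything to globular equalities in $\ca{H}(\dc{D})$, where they follow from the triangle identities. The paper also cites this result, \cite[Prop.~3.3]{Monadsindoublecats}, so we may fall back on that reference for the coherence bookkeeping.
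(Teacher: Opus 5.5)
Your proposal is correct and takes the same approach as the paper. The paper's proof only records that the reindexing functors of $\End(\dc{D})\to\dc{D}_0$ are $\wc{f}\odot-\odot\wh{f}$ and $\wh{f}\odot-\odot\wc{f}$, and that these restrict to monoids, respectively comonoids, in the fibres $(\dcD{X}{X},\odot,1_X)$. You fill in what it leaves implicit: pullback-stability for $\End(\dc{D})$, the lax/oplax monoidal structure coming from $\wh{f}\dashv\wc{f}$, and restricting the universal property via uniqueness of factorisation through $\kappa$. All of this goes through. One small imprecision: the unit half of showing that $\kappa$ is a monad morphism needs only $p_1p_2=1_f=q_1q_2$ and interchange, not the triangle identities; the triangle identities are what the unit laws of $f^*B$ require.
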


\begin{proof}
We briefly recall the related structures. For $\End(\dc{D})$, the fiber above some $X\in \dc{D}_0$ is the endo-hom category $\ca{H}(\dc{D})(X,X)=\dcD{X}{X}$ and for any vertical arrow $f:X\to Y$, the reindexing functors are
\begin{equation}\label{eq:adj}
    \begin{tikzcd}
      \dcD{X}{X} \arrow[rr, "{\hat f\odot\, -\, \odot\check f}", "\bot"', shift left=2] &  & \dcD{Y}{Y}. \arrow[ll, "{\check f\odot\, -\, \odot \hat f}", shift left=2]
    \end{tikzcd}
\end{equation}
For $\Mnd(\dc{D})$ and $\Cmd(\dc{D})$, 
the fiber over some $X\in\dc{D}_0$ is the category of (co)monoids over the monoidal category $(\dcD{X}{X},\odot, 1_X)$ with the same reindexing functors, which can be seen to restrict to those categories accordingly.
\end{proof}

Very similarly to \cref{lem:(co)limits in End(D)}, parallel limits in fibrant double categories induce limits in $\Mnd(\dc{D})$ and dually for $\Cmd(\dc{D})$, see \cite[Prop.~3.3.6]{SweedlerDouble}.

\begin{prop}\label{(co)limits in (co)monads}
  If a fibrant double category $\dc{D}$ has parallel $\ca{I}$-limits, then the category $\Mnd(\dc{D})$ has all $\ca{I}$-limits and the inclusion $\Mnd(\dc{D})\to\End(\dc{D})$ creates them. Dually, if $\dc{D}$ has parallel $\ca{I}$-colimits, then the category $\Cmd(\dc{D})$ has $\ca{I}$-colimits and the inclusion $\Cmd(\dc{D})\to \End(\dc{D})$ creates them.
\end{prop}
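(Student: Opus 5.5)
We prove the statement for monads; the comonad statement is formally dual. The dualization swaps limits for colimits and the fibration $\Mnd(\dc{D})\to\dc{D}_0$ for the opfibration $\Cmd(\dc{D})\to\dc{D}_0$ of \cref{prop:MonComonfibred}.

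Let $D\colon\ca{I}\to\Mnd(\dc{D})$ be a diagram, with components monads $(A_i\colon X_i\bular X_i,\mu_i,\eta_i)$ and monad morphisms ${}^{f_u}\alpha_u^{f_u}$. By \cref{lem:(co)limits in End(D)}, the underlying diagram in $\End(\dc{D})$ has a limit $A\colon X\bular X$ with projections ${}^{p_i}\pi_i^{p_i}\colon A\Rightarrow A_i$. These are computed in $\dc{D}_1$, with $X=\lim X_i$ in $\dc{D}_0$, because $\Gr{s},\Gr{t}$ preserve the limit. Creation then amounts to showing that $A$ carries a unique monad structure making every $\pi_i$ a monad morphism, and that with this structure the cone is limiting in $\Mnd(\dc{D})$.

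\textbf{Multiplication.} The family of 2-cells
\[
\mu_i\cdot(\pi_i\odot\pi_i)\colon A\odot A\Rightarrow A_i,
\]
each with vertical boundaries $p_i$ on both sides, is a cone over the diagram in $\dc{D}_1$. This uses that each $\alpha_u$ commutes with multiplications, together with functoriality of $\odot$ on 2-cells. Since the limit lives in $\dc{D}_1$ and every component has the appropriate vertical boundaries, there is a unique 2-cell $\mu\colon A\odot A\Rightarrow A$ in $\dc{D}_1$ with $\pi_i\cdot\mu=\mu_i\cdot(\pi_i\odot\pi_i)$.

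\textbf{Globularity.} One must check that $\mu$ is globular, i.e.\ $\Gr{s}(\mu)=\Gr{t}(\mu)=\id_X$. This follows from the uniqueness part of the limit $X=\lim X_i$ in $\dc{D}_0$: $\Gr{s}(\mu)$ is a map $X\to X$ satisfying $p_i\circ\Gr{s}(\mu)=p_i$ for all $i$, so it is the identity. The unit $\eta\colon1_X\Rightarrow A$ is induced in the same way from the cone $\eta_i\cdot1_{p_i}\colon1_X\Rightarrow A_i$.

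\textbf{Axioms and universality.} Associativity and unitality are checked by postcomposing with the jointly monic family $\{\pi_i\}$ and reducing to the corresponding axioms for each $A_i$. This uses naturality of $a,\ell,r$ with respect to the cells $\pi_i$, $1_{p_i}$. The same jointly-monic argument shows that the structure is unique. For universality, take a cone of monad morphisms $\beta_i\colon B\Rightarrow A_i$. It induces a unique 2-cell $\beta\colon B\Rightarrow A$ in $\End(\dc{D})$, and $\beta$ is a monad morphism by once more comparing both sides of \eqref{monadhom} after composing with each $\pi_i$.

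\textbf{Main obstacle.} The only delicate point is the bookkeeping of vertical boundaries: that the limit of non-globular cells in $\dc{D}_1$ yields globular structure maps, and that the induced cones have matching boundaries. Both are handled by the fact that $\Gr{s},\Gr{t}$ preserve $\ca{I}$-limits, which reduces the question to uniqueness in $\dc{D}_0$.

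For comonads, apply the same argument with colimits. The comultiplication $\delta\colon C\Rightarrow C\odot C$ is induced out of the colimit $C=\mathrm{colim}\,C_i$ from the cocone $(\iota_i\odot\iota_i)\cdot\delta_i$, and the counit $\epsilon\colon C\Rightarrow1_X$ from the cocone $1_{\iota_i}\cdot\epsilon_i$. The axioms follow by precomposing with the jointly epic coprojections $\iota_i$.
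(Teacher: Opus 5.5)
Your proposal is correct and matches the approach the paper indicates; the paper itself only cites \cite[Prop.~3.3.6]{SweedlerDouble} here. You reduce to the limit in $\End(\dc{D})$ via \cref{lem:(co)limits in End(D)}, lift $\mu$ and $\eta$ through the limit cone, get globularity from uniqueness in $\dc{D}_0$ and the axioms from joint monicity, which is exactly the argument in the limit half of the proof of \cref{thm:Mndlp} and, dually, in \cref{MndColimits}. You also rightly need no preservation hypotheses on $\odot$ or $1$, because monad structure cells point into the carrier and comonad structure cells point out of it.
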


\begin{rmk}\label{rem:connected}
It is known (e.g. \cite[Cor.~2.2.3]{SweedlerDouble}) that for any fibration, the inclusion of a fibre into the total category preserves any \emph{connected} limit that the fibration preserves, and dually for colimits in opfibrations. This is worth mentioning, because it clarifies that in general, limits in the fibre are not the same as limits in the total category even in the the case of `fibred complete' fibrations. 
In the above case, when $\dc{D}$ has parallel connected limits and colimits, the inclusion of the fiber $\dcD{X}{X}\to\End(\dc{D})$ in the total category of the (op)fibration $\End(\dc{D})\to\dc{D}_0$ preserves all connected limits and colimits, and similarly for monads and comonads. The same holds under the stronger assumption (\cref{rmk:term_conv}) of stable local connected limits and colimits.
\end{rmk}

\section{Monads}

In this section, we study fundamental properties of the category of monads in a double category. Firstly, we show that $\Mnd(\dc{D})$ is monadic over the category of endomorphisms $\End(\dc{D})$ under
suitable hypotheses on the double category $\dc{D}$ -- \cref{thm:Mnd(D)monadic}. This can be seen as a generalization of the monadicity
of $\VCat$ over $\VGrph$ for a (suitable) monoidal category $\ca{V}$ (\cite{Wolff}), or the similar and more general result concerning $\ca{W}\textrm{-}\nc{Cat}$ over a  bicategory $\ca{W}$ (\cite{Varthrenr}). Second, we prove that $\Mnd(\dc{D})$ is cocomplete, again under suitable assumptions on $\dc{D}$. In fact, part of our reasoning establishing this result faithfully adapts arguments of \cite{Varthrenr}, but providing the necessary adaptations to the double-categorical context. Finally, we give conditions under which the category of monads $\Mnd(\dc{D})$ is locally presentable -- \cref{thm:Mndlp}.

The first step, namely the existence of a free monad on an endomorphism in a double category, is based on the standard `free monoid construction' in the setting of a monoidal category; see \cite{FreeMonoids} for a general discussion of relevant conditions.
We briefly recall one version that suits our purposes: if $\ca{V}$ is a monoidal category with countable coproducts which are preserved by $\otimes$ in each variable, then the forgetful functor $\Mon(\ca{V})\to\ca{V}$ has a
left adjoint that maps any $A\in\ca{V}$ to the ``geometric series'' $\sum\limits_{n\in\mathbb{N}}A^{\otimes n}$.

\begin{prop}\label{prop:freemonadfunctorexistence1}
  Suppose that $\dc{D}$ is a fibrant double category with stable local countable coproducts. Then the
  forgetful functor $U\colon\Mnd(\dc{D})\to\End(\dc{D})$ has a left adjoint.
\end{prop}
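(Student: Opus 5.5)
The plan is to use the fibred adjoint result, \cref{thm:totaladjointthm}. By \cref{prop:MonComonfibred}, $\End(\dc{D})\to\dc{D}_0$ and $\Mnd(\dc{D})\to\dc{D}_0$ are fibrations. In both, the reindexing along $f\colon X\to Y$ is $\wc{f}\odot-\odot\wh{f}\colon\dcD{Y}{Y}\to\dcD{X}{X}$, and for monads this is restricted to monoids. Since $U$ is the inclusion, it commutes with the projections to $\dc{D}_0$ and preserves the chosen cartesian liftings, so $U$ is a fibred functor. Its fibre over $X$ is the forgetful functor
\[
U_X\colon\Mon(\dcD{X}{X},\odot,1_X)\to\dcD{X}{X}.
\]
By \cref{thm:totaladjointthm}, it then suffices to show that each $U_X$ has a left adjoint $L_X$.

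For the fibrewise adjoints, I would use the free monoid construction recalled just before the statement. The monoidal category $(\dcD{X}{X},\odot,1_X)$ has countable coproducts, because $\dc{D}$ has local countable coproducts. Stability, as in \cref{def: stable local}, says exactly that $M\odot-$ and $-\odot M$ preserve these coproducts for every endo-1-cell $M$ on $X$. The hypothesis covers 1-cells between arbitrary objects, so this special case certainly holds. Hence
\[
L_X(A)=\textstyle\sum_{n\in\mathbb{N}}A^{\odot n},\qquad A^{\odot 0}=1_X,
\]
is the free monoid on $A$. Its multiplication is induced via the isomorphisms $\big(\sum_m A^{\odot m}\big)\odot\big(\sum_n A^{\odot n}\big)\cong\sum_{m,n}A^{\odot m}\odot A^{\odot n}\to\sum_k A^{\odot k}$, which use stability twice, and its unit is the coprojection from $1_X$. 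Associators are only coherent isomorphisms here, so $A^{\odot n}$ should be fixed with some chosen bracketing, and the standard coherence argument handles the rest. This gives $L_X\dashv U_X$ for every $X$, and \cref{thm:totaladjointthm} assembles these into a global left adjoint $L\colon\End(\dc{D})\to\Mnd(\dc{D})$.

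The main obstacle is checking the fibredness of $U$ carefully. One must verify that the cartesian liftings in $\Mnd(\dc{D})$ are really sent to cartesian liftings in $\End(\dc{D})$, that is, that the cartesian 2-cell $\wc{f}\odot B\odot\wh{f}\Rightarrow B$ in $\End(\dc{D})$ is a monad morphism when $B$ is a monad. I would get this by citing the proof of \cref{prop:MonComonfibred}: there the fibration structure on $\Mnd(\dc{D})$ is obtained by restricting that of $\End(\dc{D})$, because $\wc{f}\odot-\odot\wh{f}$ is lax monoidal, with structure coming from the counit $\wh{f}\odot\wc{f}\Rightarrow 1_Y$. The lifting is then a monad map, and any monad map factoring through it factors by a monad map. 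This gives fibredness. The existence of the left adjoint then follows from the inspection of \cite[Prop.~8.4.2]{Handbook2} quoted in \cref{thm:totaladjointthm}.

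As a sanity check, $L(A)$ is concrete. For $A\colon X\bular X$ it is $\sum_n A^{\odot n}$ on $X$, with unit map $A\to A^{\odot 1}\hookrightarrow L(A)$ given by the coprojection. A 2-cell ${}^f\alpha^f\colon A\Rightarrow B$ with $B$ a monad factors uniquely through it, via the globular map into $\wc{f}\odot B\odot\wh{f}$ followed by the fibrewise universal property. This matches the expected free $\ca{V}$-category in the case $\dc{D}=\VMMat$.
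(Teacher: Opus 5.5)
Your proposal is correct and follows the paper's proof: show that $U$ is a fibred functor over $\dc{D}_0$ (the cartesian liftings of $\Mnd(\dc{D})$ being those of $\End(\dc{D})$, by \cref{prop:MonComonfibred}), reduce via \cref{thm:totaladjointthm} to the fibrewise forgetful functors $\Mon(\dcD{X}{X})\to\dcD{X}{X}$, and solve these with the geometric-series free monoid $\sum_n A^{\odot n}$, using stability of the local countable coproducts. Your extra details fill in steps the paper leaves implicit: the lax monoidal structure on $\wc{f}\odot-\odot\wh{f}$, whose unit constraint comes from the unit $1_X\Rightarrow\wc{f}\odot\wh{f}$ rather than the counit, and the explicit multiplication.
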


\begin{proof}
 Since $\dc{D}$ is fibrant, the forgetful functor $U\colon\Mnd(\dc{D})\to\End(\dc{D})$ is a fibred functor
  \begin{displaymath}
    \begin{tikzcd}
      \Mnd(\dc{D})\ar[rr,"U"]\ar[dr] && \End(\dc{D})\ar[dl] \\
      & \dc{D}_{0}
    \end{tikzcd}
  \end{displaymath}
  between the fibrations $\Mnd(\dc{D})\to\dc{D}_0$ and $\End(\dc{D})\to\dc{D}_0$. 
  Indeed, the triangle commutes and the cartesian liftings 
  for both fibrations are essentially the same, 
  see \cref{prop:MonComonfibred}.
  Thus, by \cref{thm:totaladjointthm}, for $U$ to have a left adjoint it suffices to show that for every $X\in\dc{D}_0$, the restriction
  $U_{X}\colon\Mnd(\dc{D})_{X}\to\End(\dc{D})_{X}$ has a left adjoint.

  Indeed, recall that the fiber $\End(\dc{D})_{X}=\dcD{X}{X}$ is a monoidal category with tensor product given by the horizontal
  composition $\odot$ of $\dc{D}$. Moreover, the functor $U_{X}\colon\Mnd(\dc{D})_{X}\to\End(\dc{D})_{X}$ is precisely the forgetful
  functor $\Mon(\dcD{X}{X})\to\dcD{X}{X}$. We know that $\dcD{X}{X}$ has countable coproducts which are preserved by tensoring in each variable by assumption (see \cref{def: stable local}).
  Thus, by the preceding discussion we deduce that under these conditions, $U_X$ has a left adjoint and the result is complete..
\end{proof}

Notice that based in the previous free monoid description, if
$\dc{D}$ is a double category as in the above proposition, then we see that the left adjoint $F\colon\End(\dc{D})\to\Mnd(\dc{D})$ will map any
$G\colon X\bular X\in\End(\dc{D})$ to the geometric series $\sum\limits_{n}{}_{X}G^{\odot n}$, where $\sum_{X}$ denotes the coproduct in the
category $\dcD{X}{X}$. Note that this coproduct is generally not the same as the coproduct in $\End(\dc{D})$ (or $\dc{D}_1$).

As was discussed earlier, the existence of free monoids in the context of a monoidal category $\ca{V}$ can be obtained via different means, depending on the surrounding assumptions on $\ca{V}$ itself. Accordingly, as is clear from the above proof, one could formulate corresponding different sets of conditions on a double category $\dc{D}$ that would guarantee the existence of free monads therein. We record here one such variation concerning the context of a locally presentable double category (\cref{defi:lp}).

\begin{prop}\label{prop:freemonadfunctorexistence2}
  Suppose that $\dc{D}$ is a locally presentable double category. Then the forgetful functor $U\colon\Mnd(\dc{D})\to\End(\dc{D})$ has a left adjoint.
\end{prop}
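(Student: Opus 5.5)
The plan follows the proof of \cref{prop:freemonadfunctorexistence1}. It reduces the problem to the fibres and then builds free monoids using local presentability in place of countable coproducts. As there, $U\colon\Mnd(\dc{D})\to\End(\dc{D})$ is a fibred functor between the fibrations of \cref{prop:MonComonfibred}, because the cartesian liftings in both are given by $\wc{f}\odot-\odot\wh{f}$. By \cref{thm:totaladjointthm}, it therefore suffices to show that each fibre functor $U_X\colon\Mon(\dcD{X}{X})\to\dcD{X}{X}$ has a left adjoint.

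For the fibres, I would use the following facts. By \cref{thm:End(D) lp}, $\dcD{X}{X}$ is locally presentable, say locally $\lambda$-presentable after enlarging $\lambda$ if necessary. By \cref{rmk:lpstable}, $\dc{D}$ has stable local filtered colimits, so $(\dcD{X}{X},\odot,1_X)$ is a locally presentable monoidal category whose tensor preserves $\lambda$-filtered colimits in each variable, after again possibly enlarging $\lambda$ so that the accessible functors $M\odot-$ and $-\odot M$ are all $\lambda$-accessible. A small point is that there are many such $M$. I would handle this by restricting to a set of $\lambda$-presentable objects $M$ and using that $\odot$ is cocontinuous-enough in the other variable. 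Alternatively, I would simply invoke the known result that for a monoidal category in which $\otimes$ preserves $\lambda$-filtered colimits in each variable (already given by the stable local filtered colimits), free monoids exist.

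To get those free monoids, I would use either of two standard routes. The first is Kelly's transfinite construction of free algebras for the well-pointed-like endofunctor $Y\mapsto 1_X+G\odot Y$, whose algebras are the objects of $\dcD{X}{X}$ equipped with an action of $G$. Since $G\odot-$ preserves $\lambda$-filtered colimits, the initial-algebra chain $0\to 1_X\to 1_X+G\odot 1_X\to\cdots$ converges at stage $\lambda$. Its colimit $T$ carries a monoid structure; the multiplication is obtained by showing $T\odot-$ is the free $(1_X+G\odot-)$-algebra functor, which uses that $-\odot Y$ preserves the transfinite colimits. Note that binary coproducts and the chain colimits exist in $\dcD{X}{X}$ because it is cocomplete.

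The second route, cleaner for this paper, is to show that $\Mon(\dcD{X}{X})$ is locally presentable and that $U_X$ preserves limits and $\lambda$-filtered colimits, and then apply the adjoint functor theorem for accessible functors between locally presentable categories. Limits are created by $U_X$ as usual. For $\lambda$-filtered colimits, I would use that $-\odot-$ preserves $\lambda$-filtered colimits jointly, since it does so separately and $\lambda$-filtered diagrams are sifted, so monoid structures pass to the colimit. Local presentability of $\Mon(\dcD{X}{X})$ follows by writing it as a category of algebras for an accessible endofunctor subject to equations, that is, as an iso-inserter/equifier of accessible functors between locally presentable categories, and invoking the limit theorem for accessible categories (Adámek--Rosický).

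I expect the main obstacle to be the uniform choice of $\lambda$ making all $M\odot-$ simultaneously $\lambda$-accessible. The fallback is the transfinite construction, which needs accessibility only of $G\odot-$ for the single given $G$, together with filtered-colimit preservation of $-\odot Y$. That version avoids the uniformity issue, so I would present it as the main argument.
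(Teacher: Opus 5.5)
Your reduction to the fibres via \cref{thm:totaladjointthm} is exactly the paper's. Your second route on the fibres is also sound: exhibit $\Mon(\dcD{X}{X})$ as a complete, accessible subcategory of algebras for $Z\mapsto Z\odot Z+1_X$, note that $U_X$ is continuous and accessible, and apply the adjoint functor theorem. This is the standard argument behind the result of \cite[\S~2.6]{MonComonBimon} that the paper invokes, and it is the same pattern as the paper's proof of \cref{thm:Mndlp}.

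The gap is in the route you chose to present as the main argument. Identifying the initial algebra $T$ of $Z\mapsto 1_X+G\odot Z$ with the free monoid on $G$ needs $-\odot Y$ to preserve binary coproducts. That preservation is what gives $(1_X+G\odot T)\odot Y\cong Y+G\odot(T\odot Y)$, hence $T\odot Y$ as the free $(G\odot-)$-algebra on $Y$, hence the multiplication $T\odot T\to T$. Your chain beginning $0\to 1_X$ also tacitly uses $G\odot 0\cong 0$. Neither is available: \cref{defi:lp} only makes $-\odot M$ and $M\odot-$ accessible, and the paper has to add stable local initial objects separately in \cref{thm:Cmdlp}.

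This is not a technicality. View $(\Set,+,\emptyset)$ as a one-object double category, like $\dc{A}\nc{b}$. It is locally presentable in the sense of \cref{defi:lp}, since $+$ preserves filtered (indeed connected) colimits in each variable. Every object has a unique monoid structure, the codiagonal, and every map is a monoid map, so the free monoid on $G$ is $G$ itself. But the initial algebra of $Z\mapsto\emptyset+(G+Z)$ is $\mathbb{N}\times G$, which is not free on $G$ once $G\neq\emptyset$. Without coproduct preservation one needs a more refined transfinite construction, of the kind discussed in \cite{FreeMonoids}, which you have not supplied. So make your second route the main argument.

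On the uniform choice of $\lambda$: your proposed repair (restrict to $\lambda$-presentable $M$ and use the other variable) is circular. Pushing a $\lambda$-filtered decomposition of $M$ through $-\odot N$ is exactly the $\lambda$-accessibility of $-\odot N$ that you are trying to obtain. The paper does not attempt such a repair. It reads \cref{rmk:lpstable} as saying that $\odot$ on $\dcD{X}{X}$ preserves all filtered colimits in each variable. With that reading, your sifted-diagonal argument directly gives accessibility of $Z\mapsto Z\odot Z+1_X$ and of $U_X$.
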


\begin{proof}
  In the exact same manner to the above proof, in the underlying fibrant setting the problem is reduced to proving the existence of fibrewise adjoints, namely adjoints to $U_{X}\colon\Mon(\dcD{X}{X})\to\dcD{X}{X}$. Since $\dc{D}$ is locally presentable, by \cref{thm:End(D) lp} each fiber $\dcD{X}{X}$ is locally presentable itself and moreover, by \cref{rmk:lpstable}, $\dcD{X}{X}$ has filtered colimits which are preserved by `tensoring' in each variable. Thus, \cite[\S~2.6]{MonComonBimon}\footnote{Although \cite{MonComonBimon} requires that the base monoidal category is symmetric, an inspection of the proofs shows that symmetry can be replaced with the tensor preserving filtered colimits in both entries, rather than in one.} can be applied to yield the required left adjoint.
  
\end{proof}

\begin{rmk}
 We should note here that questions concerning the existence of free monads in double categories have also been considered in \cite{Monadsindoublecats,Doubleadjunctionsandfreemonads}. However, the approach therein differs from ours in at least a couple of aspects.
 
 First of all, the authors of loc. cit. do not investigate conditions on the double category $\dc{D}$ that would directly imply the existence of free monads in $\dc{D}$. 
  Rather, they provide conditions under which, knowing that the horizontal bicategory $\ca{H}(\dc{D})$ admits free monads, one can deduce the same for $\dc{D}$. Second, their notion of ``free monad'' is a 2-dimensional one, which is to say that it must satisfy a stronger universal property than our ordinary one. In particular, they work with a double category $\dc{M}\nc{nd}(\dc{D})$ of monads in $\dc{D}$, whose vertical category is precisely our ordinary category of monads $\Mnd(\dc{D})$.
\end{rmk}

\begin{ex}
	Specializing \cref{prop:freemonadfunctorexistence1} to the double category $\dc{S}\nc{pan}(\ca{C})$ whose monads are internal categories as per \cref{ex:spanmonads}, we have that when $\ca{C}$ has (pullbacks and) universal countable coproducts, then the forgetful functor $\nc{Cat}(\ca{C})\to\nc{Grph}(\ca{C})$ from internal categories to internal graphs in $\ca{C}$ has a left adjoint. Alternatively, going via \cref{prop:freemonadfunctorexistence2}, it can be shown that said left adjoint exists as well when $\ca{C}$ is locally presentable, see \cref{ex:lpdouble}. 
\end{ex}

\begin{ex}
	Let $\ca{C}$ be a regular category which has universal countable coproducts. Then $\dc{R}\nc{el}(\ca{C})$ has stable local countable coproducts and so \cref{prop:freemonadfunctorexistence1} applies. 
	It is easy to see that in this case the left adjoint forms the usual ``reflexive-transitive hull'' of any given relation $R\colon X\bular X$ on an object $X$ in $\ca{C}$, i.e. $F(R)=\bigcup\limits_{n=0}^{\infty}R^{n}$.
\end{ex}

\begin{ex}
	For $\VMMat$, \cref{prop:freemonadfunctorexistence1} immediately applies to yield a left adjoint to $\VCat\to\VGrph$, since arbitrary coproducts  stable under tensoring with a fixed variable are already required in $\ca{V}$ in order to define the composition in $\VMMat$ (\cref{ex:VMMat}). 
	In the special case when $\ca{V}$ is symmetric monoidal closed, this recovers a result originally due to H. Wolff \cite[Corollary 2.3]{Wolff}.
\end{ex}

As in the case of monoids in a monoidal category, we have that $\Mnd(\dc{D})$ is monadic over $\End(\dc{D})$ as soon as free monads exist.

\begin{thm}\label{thm:Mnd(D)monadic}
	Let $\dc{D}$ be a fibrant double category such that the forgetful functor $U\colon\Mnd(\dc{D})\to\End(\dc{D})$ has a left adjoint. Then $\Mnd(\dc{D})$ is monadic over $\End(\dc{D})$.
\end{thm}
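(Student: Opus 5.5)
We plan to verify Beck's monadicity theorem for $U\colon\Mnd(\dc{D})\to\End(\dc{D})$, in its precise form: $U$ has a left adjoint by hypothesis, so it suffices to show that $U$ creates coequalizers of $U$-split pairs. Conservativity will come for free in that verification, but it is also directly visible. If a monad morphism ${}^f\alpha^f$ is invertible in $\End(\dc{D})$, its inverse ${}^{f^{-1}}\alpha^{-1\,f^{-1}}$ is a 2-cell. Pasting the axioms \eqref{monadhom} for $\alpha$ with $\alpha^{-1}$ shows that the inverse respects $\mu$ and $\eta$. For this we use that vertical composition of 2-cells is strictly associative and that $\odot$ is functorial on 2-cells, so $(\alpha\odot\alpha)^{-1}=\alpha^{-1}\odot\alpha^{-1}$.

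Now let $\alpha,\beta\colon A\rightrightarrows B$ be a parallel pair of monad morphisms with vertical components $f,g\colon X\rightrightarrows Y$. Suppose that in $\End(\dc{D})$ we have a split coequalizer
\[ {}^h\gamma^h\colon B\Rightarrow C,\qquad {}^s\sigma^s\colon C\Rightarrow B,\qquad {}^t\tau^t\colon B\Rightarrow A, \]
with $\gamma\sigma=1_C$, $\alpha\tau=1_B$ and $\beta\tau=\sigma\gamma$, where $C\colon Z\bular Z$. We define a multiplication and a unit on $C$ by
\[ \mu_C \coloneqq \gamma\circ\mu_B\circ(\sigma\odot\sigma),\qquad \eta_C\coloneqq \gamma\circ\eta_B\circ 1_h\ \text{ reindexed, i.e. } \eta_C=\gamma\eta_B\,\text{-composed with } 1_s . \]
More precisely, $\eta_C$ is the composite of $1_s\colon 1_Z\Rightarrow 1_Y$, then $\eta_B$, then $\gamma$. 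The vertical boundaries of these composites are $hs=\mathrm{id}_Z$, so $\mu_C$ and $\eta_C$ are globular, as required.

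The key technical step is to show that $\gamma$ is a monad morphism, namely
\[ \mu_C(\gamma\odot\gamma)=\gamma\mu_B \qquad\text{and}\qquad \eta_C\circ 1_h=\gamma\circ\eta_B . \]
This is the standard argument for monoids, carried out with 2-cells in $\dc{D}_1$. The pair $\gamma\odot\gamma$, $\sigma\odot\sigma$ (with $\tau\odot\tau$) is again a split fork, by functoriality of $\odot$ on composable pairs of 2-cells. We then compute
\[ \gamma\mu_B(\sigma\gamma\odot\sigma\gamma)=\gamma\mu_B(\beta\tau\odot\beta\tau)=\gamma\beta\mu_A(\tau\odot\tau)=\gamma\alpha\mu_A(\tau\odot\tau)=\gamma\mu_B(\alpha\tau\odot\alpha\tau)=\gamma\mu_B . \]
This uses only that $\alpha,\beta$ are monad morphisms and that $\gamma\alpha=\gamma\beta$. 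The subtle point is that mixed terms like $\sigma\gamma\odot 1$ do not appear directly, so we expand $\sigma\gamma\odot\sigma\gamma$ in one step as above.

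The unit condition is handled similarly, using $1_{hs}=1_{\mathrm{id}}$ and $\sigma\gamma\eta_B=\beta\tau\eta_B=\beta\eta_A 1_t$. Associativity and unitality of $(C,\mu_C,\eta_C)$ then follow because $\gamma\odot\gamma\odot\gamma$ is (split) epi in $\dc{D}_1$, and $\gamma$ is compatible with $\mu$ and with the associator and unitors by naturality.

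Next, $\gamma$ is a coequalizer in $\Mnd(\dc{D})$. Given a monad morphism $\delta\colon B\Rightarrow D$ with $\delta\alpha=\delta\beta$, the unique factorization $\delta=\bar\delta\gamma$ in $\End(\dc{D})$ is $\bar\delta=\delta\sigma$. This is a monad morphism because $\gamma\odot\gamma$ is epi and $\bar\delta\mu_C(\gamma\odot\gamma)=\delta\mu_B=\mu_D(\delta\odot\delta)=\mu_D(\bar\delta\odot\bar\delta)(\gamma\odot\gamma)$; the unit is handled analogously. Uniqueness of the monad structure on $C$ making $\gamma$ a morphism follows from the same epi argument, and this gives creation.

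The main obstacle we expect is bookkeeping with the non-identity vertical components $f,g,h,s,t$, together with the fact that $\odot$ is a functor on $\dc{D}_1\times_{\dc{D}_0}\dc{D}_1$ rather than on a fibre. We will check at each step that the composites have matching vertical boundaries, and that the globular 2-cells $\mu,\eta$ interact with 2-cells like $1_h$ correctly. Fibrancy is not strictly needed for this argument; it could alternatively be used to transport everything into the fibre $\dcD{Z}{Z}$ and reduce to the monoidal case.
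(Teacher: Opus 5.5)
Your overall strategy is the paper's: Beck's theorem, where a $U$-split coequalizer is lifted by giving $C$ the structure induced through the split forks $\gamma\odot\gamma$ and $1_h$. Your explicit formulas $\mu_C=\gamma\mu_B(\sigma\odot\sigma)$ and $\eta_C=\gamma\eta_B1_s$ are exactly what the universal property of those split coequalizers produces. Your check $hs=\id_Z$ replaces the paper's use of $h$ being epic to get globularity. The multiplication computation, the associativity check, the coequalizer property in $\Mnd(\dc{D})$ and conservativity are all fine.

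\textbf{The gap.} One step is false as written. In the unit check you use $\beta\tau\eta_B=\beta\eta_A1_t$, which amounts to $\tau\eta_B=\eta_A1_t$. But $\tau$ is only a morphism of $\End(\dc{D})$, not a monad morphism, and this fails in general. Already for monoids in $(\Set,\times,1)$, take the canonical presentation $FUFUB'\rightrightarrows FUB'\to B'$. There the splitting $\tau=\eta_{UFUB'}$ sends the empty word to the one-letter word whose letter is the empty word. Likewise $\sigma\gamma\eta_B\neq\eta_B$ there, so the route through $\sigma\gamma\eta_B$ cannot work.

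\textbf{The repair.} Use only the vertical shadow of the splitting, namely the split fork $(1_h,1_s,1_t)$ of $(1_f,1_g)$ obtained by applying $1_{(-)}\circ\Gr{s}$. This is how the paper handles the unit:
\[
\eta_C1_h=\gamma\eta_B1_{sh}=\gamma\eta_B1_g1_t=\gamma\beta\eta_A1_t=\gamma\alpha\eta_A1_t=\gamma\eta_B1_f1_t=\gamma\eta_B .
\]
This uses $sh=gt$ and $ft=\id_Y$ (the vertical parts of $\beta\tau=\sigma\gamma$ and $\alpha\tau=1_B$), the unit axioms for $\alpha$ and $\beta$, and $\gamma\alpha=\gamma\beta$. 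With this in place of your unit computation, the argument is complete.

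One further small correction. Unitality of $C$, uniqueness of $\eta_C$, and the unit condition for $\bar\delta$ should be checked against the split epis $1_h\odot\gamma$ and $1_h$, with sections $1_s\odot\sigma$ and $1_s$. They cannot be checked against $\gamma\odot\gamma\odot\gamma$.
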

\begin{proof}
We apply Beck's monadicity theorem. $U\colon\Mnd(\dc{D})\to \End(\dc{D})$ has a left adjoint by hypothesis, and it can be verified to reflect isomorphisms.
Lastly, if a pair or parallel monad maps
\[
\begin{tikzcd}
X \ar[ddrr,phantom,"{\Two \alpha, \beta}"]
\arrow[dd, "f"', shift right] \arrow[dd, "g", shift left] \arrow[rr, bul, "M"] & & X \arrow[dd, "f", shift left] \arrow[dd, "g"', shift right] \\
& & &  \\
Y \arrow[rr, bul, "N"'] & & Y
\end{tikzcd}
\] 
is such that $U\alpha_f,U\beta_g$ have a split coequalizer 
$\gamma_e\colon N_Y\Rightarrow E_Z$ in $\End(\dc{D})$, then we can give $E$ a monad structure that makes the same diagram a coequalizer in $\Mnd(\dc{D})$.

For the multiplication, applying $-\odot -$ to the above absolute colimit we get a (split) coequalizer $\gamma\odot\gamma$   
of $\alpha\odot \alpha, \beta\odot \beta$ in $\End(\dc{D})$. But $\gamma\circ \mu_N$ also coequalizes this pair via
\begin{displaymath}
\begin{tikzcd}
M\odot M\ar[r,shift left,"\alpha\odot\alpha"]\ar[r,shift right,"\beta\odot\beta"']\ar[d,"\mu_M"'] & N\odot N\ar[d,"\mu_N"] \ar[r,"\gamma\odot\gamma"] & E\odot E\ar[d,dashed,"\exists !\mu_E"]\\
M\ar[r,shift left,"\alpha"]\ar[r,shift right,"\beta"'] & N\ar[r,"\gamma"'] & E
\end{tikzcd}
\end{displaymath}
and therefore exists a unique 2-cell $\mu_E\colon E\odot E\Rightarrow E$ that commutes with the appropriate maps, and is moreover globular due to the vertical part of said commutativity -- since $e$ is the coequalizer of $f,g$ in $\dc{D}_0$ thus epic. To prove that $\mu_E$ is associative, 
we verify that $\mu_E\circ (1_E\odot \mu_E)\circ (\gamma\odot\gamma\odot \gamma)=\mu_E\circ (\mu_E\odot 1_E)\circ (\gamma\odot\gamma\odot \gamma)$ and use the fact that $\gamma\odot\gamma\odot \gamma$ is epic.  

For the unit, applying $1_{(-)}\circ \mathfrak{s}$ on the original split coequalizer yields a split coequalizer $1_e$ of $1_f, 1_g$ in $\dc{D}_1$ -- where recall that for example, $1_e$ is the 2-map $1_Y\Rightarrow 1_Z$ with vertical boundaries $e$. With similar arguments as before, $\gamma\circ \eta_N$ also coequalizes $1_f,1_g$ via
\begin{displaymath}
\begin{tikzcd}
1_X\ar[r,shift left,"1_f"]\ar[r,shift right,"1_g"']\ar[d,"\eta_M"'] & 1_Y\ar[d,"\eta_N"]\ar[r,"1_e"] & 1_Z\ar[d,dashed,"\exists !\eta_E"] \\
M\ar[r,shift left,"\alpha"]\ar[r,shift right,"\beta"'] & N\ar[r,"\gamma"'] & E
\end{tikzcd}
\end{displaymath}
thus obtaining a unique 2-cell $\eta_E\colon 1_E\Rightarrow E$ which, as before, commutes with the appropriate maps and is furthermore globular. For the left unitality of $\eta_E$, we 
verify that $\mu_E\circ (\eta_E\odot\id_E)\circ (1_e\odot \gamma)=\ell_E\circ (1_e\odot \gamma)$ and use the fact that $1_e\odot \gamma$ is epic. Similarily for the right unitality.

By their construction, $\mu_E$ and $\eta_E$ make $E$ into a monad and $\gamma_e$ a monad morphism, and then it can be seen that the original coequalizer remains such in $\Mnd(\dc{D})$.
\end{proof}

%

In particular, we have now isolated conditions on a double category under which the category of monads is monadic over the category of endomorphisms.

\begin{cor}\label{cor:monadicity}
	Let $\dc{D}$ be a fibrant double category which satisfies either one of the following conditions:
	\begin{enumerate}
		\item $\dc{D}$ has stable local countable coproducts.
		\item $\dc{D}$ is locally presentable.
	\end{enumerate}
	Then $\Mnd(\dc{D})$ is monadic over $\End(\dc{D})$.
\end{cor}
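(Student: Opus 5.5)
This corollary follows by combining \cref{thm:Mnd(D)monadic} with the two existence results for free monads. \cref{thm:Mnd(D)monadic} says that for a fibrant double category $\dc{D}$, monadicity of $U\colon\Mnd(\dc{D})\to\End(\dc{D})$ follows as soon as $U$ has a left adjoint. So all we need is that each of the two conditions produces such a left adjoint.

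\emph{Case (1).} Suppose $\dc{D}$ has stable local countable coproducts. By \cref{prop:freemonadfunctorexistence1}, $U$ has a left adjoint, given fibrewise by the geometric series $\sum_{n} {}_X G^{\odot n}$ and assembled via \cref{thm:totaladjointthm}. By our convention in \cref{rmk:term_conv}, this hypothesis includes countable coproducts in $\dc{D}_0$, but this is not needed here: the proposition only uses the fibres $\dcD{X}{X}$.

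\emph{Case (2).} Suppose $\dc{D}$ is locally presentable. It is fibrant by \cref{defi:lp}, and \cref{prop:freemonadfunctorexistence2} gives a left adjoint to $U$.

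In either case \cref{thm:Mnd(D)monadic} applies, so $\Mnd(\dc{D})$ is monadic over $\End(\dc{D})$.

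There is essentially no obstacle, since the substantive work was done in the two propositions and the theorem. The one point to check is that the hypotheses of \cref{thm:Mnd(D)monadic} are met. Its proof used only fibrancy, the existence of the left adjoint, and Beck's criterion, with split coequalizers created via the absolute-colimit argument. No colimit hypotheses on $\dc{D}$ enter beyond those needed for the left adjoint, so nothing further has to be verified.
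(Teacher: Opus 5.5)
Your proposal is correct and matches the paper: the paper obtains this corollary by feeding the left adjoints from \cref{prop:freemonadfunctorexistence1} (condition (1)) and \cref{prop:freemonadfunctorexistence2} (condition (2)) into \cref{thm:Mnd(D)monadic}. Your side remarks are also accurate: no colimit hypothesis on $\dc{D}_0$ is needed, and the Beck argument requires nothing beyond the left adjoint, because $U$-split coequalizers are absolute.
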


We next want to tackle the question of cocompleteness of $\Mnd(\dc{D})$, for which purpose it will suffice in our context to construct coequalizers. To accomplish this, we directly adapt the corresponding results in \cite{Varthrenr}, specifically Lemma 4 and Proposition 5 therein.

\begin{lem}\label{lem:BettiCarboniStreetWalters}
  Let $\dc{D}$ be a fibrant double category which has stable local coequalizers. Suppose that $\phi,\psi\colon A\to B$
  are morphisms in $\Mon(\dcD{X}{X},\odot,1_X)$ for an $X\in\dc{D}_0$, and let $\chi\colon B\to C$ be their coequalizer in $\dcD{X}{X}$. Then
  the following are equivalent:
  \begin{enumerate}
    \item C has a monad structure such that $\chi$ is a (globular) monad morphism.
    \item The morphism $\chi\mu_B\colon B\odot B\to C$ coequalizes both pairs of morphisms $
      \begin{tikzcd}A\odot B \ar[r,shift
        left=1ex,"\phi\odot\id_{B}"]\ar[r,shift right=1ex,"\psi\odot\id_{B}"'] & B\odot B
      \end{tikzcd}$ and $
      \begin{tikzcd}B\odot A\ar[r,shift
        left=1ex,"\id_{B}\odot\phi"]\ar[r,shift right=1ex,"\id_{B}\odot\psi"'] & B\odot B.
      \end{tikzcd}$
  \end{enumerate}
  Furthermore, in this case $\chi$ is the coequalizer of $(\phi,\psi)$ in $\Mnd(\dc{D})$. 
  
  In particular, if the initial pair $(\phi,\psi)$ has a common splitting in
  $\dcD{X}{X}$, condition (2) is satisfied\footnote{We thank Ross Street for clarifying this final part of the statement.}.
\end{lem}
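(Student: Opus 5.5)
The plan is to work entirely inside the monoidal category $(\dcD{X}{X},\odot,1_X)$ and adapt \cite[Lemma~4]{Varthrenr}. The key fact is that stable local coequalizers are preserved by $M\odot-$ and $-\odot M$. Hence, since reflexive-type arguments are not available, we use the standard ``$3\times 3$'' reasoning. The map $\chi\odot\chi\colon B\odot B\to C\odot C$ is the composite $(\chi\odot\id_C)(\id_B\odot\chi)$, and each factor is a coequalizer: $\id_B\odot\chi$ coequalizes $\id_B\odot\phi,\id_B\odot\psi$, and $\chi\odot\id_C$ coequalizes $\phi\odot\id_C,\psi\odot\id_C$. In particular $\chi\odot\chi$ and $\chi\odot\chi\odot\chi$ are epimorphisms. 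Moreover, a map $h\colon B\odot B\to D$ factors (uniquely) through $\chi\odot\chi$ exactly when it coequalizes both pairs in (2). To see this, first factor $h$ through $\id_B\odot\chi$ using the second pair. Then check that the induced map $B\odot C\to D$ coequalizes $\phi\odot\id_C,\psi\odot\id_C$; this follows after precomposing with the epimorphism $\id_A\odot\chi$ and using the first pair.

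With this in hand, (1)$\Rightarrow$(2) is immediate. If $\chi$ is a monad map then $\chi\mu_B=\mu_C(\chi\odot\chi)$. Composing with $\phi\odot\id_B$ gives $\mu_C(\chi\phi\odot\chi)=\mu_C(\chi\psi\odot\chi)$, and likewise on the other side. For (2)$\Rightarrow$(1), the factorization property yields a unique $\mu_C$ with $\mu_C(\chi\odot\chi)=\chi\mu_B$, and we set $\eta_C=\chi\eta_B$. Associativity and unitality of $C$ are then checked after precomposing with the epimorphisms $\chi\odot\chi\odot\chi$ and $1_X\odot\chi$, $\chi\odot 1_X$ (the latter are epic as images of a coequalizer under stable tensoring). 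These reduce to the corresponding laws for $B$ together with naturality of $a,\ell,r$. By construction $\chi$ is then a globular monad map.

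For the universal property in $\Mnd(\dc{D})$, take a monad map ${}^f\theta^f\colon B\Rightarrow D$ into $D\colon Y\bular Y$ with $\theta\phi=\theta\psi$ (these are monad maps with vertical part $\id_X$). This is the step I expect to need care, because $\chi$ is only a coequalizer in the fibre, not in $\End(\dc{D})$. I would transport $\theta$ along fibrancy to a globular 2-cell $\bar\theta\colon B\Rightarrow\wc{f}\odot D\odot\wh{f}$ in $\dcD{X}{X}$. Then $\bar\theta$ is a monoid map into the restricted monad $f^*D$, by \cref{prop:MonComonfibred}, using that the cartesian liftings of $\Mnd(\dc{D})$ and $\End(\dc{D})$ agree. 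This reduces the claim to the fibre. There $\bar\theta$ factors uniquely as $\bar\theta=\bar\kappa\chi$ in $\dcD{X}{X}$, and $\bar\kappa$ is a monoid map: one checks it after precomposing with the epimorphisms $\chi\odot\chi$ and $\id_{1_X}$. Transporting $\bar\kappa$ back along the cartesian lifting gives the unique monad map $C\to D$, with uniqueness coming from the fibration's bijection.

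Finally, for the footnoted statement, suppose there is a common splitting, namely $s\colon B\to A$ with $\phi s=\id_B$, together with a suitable $t\colon C\to B$ making it a split (absolute) coequalizer. Then $\chi\odot\id_B$ and $\id_B\odot\chi$ are again split coequalizers. I would verify (2) directly by a short computation: use $\phi s=\id_B$, the fact that $\phi$ and $\psi$ are monoid maps, so $\mu_B(\phi\odot\phi)=\phi\mu_A$, and $\chi\phi=\chi\psi$. This gives $\chi\mu_B(\phi\odot\id_B)=\chi\mu_B(\phi\odot\phi s)=\chi\phi\mu_A(\id_A\odot s)=\chi\psi\mu_A(\id_A\odot s)=\chi\mu_B(\psi\odot\psi s)$. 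It then remains to compare $\chi\mu_B(\psi\odot\psi s)$ with $\chi\mu_B(\psi\odot\id_B)$, where $\psi s$ replaces $\id_B$; this is exactly where the common splitting $\psi s=\id_B$ is used.
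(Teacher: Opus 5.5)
Your proposal is correct, and apart from one step it is the paper's proof. The following all match:
\begin{itemize}
\item the $3\times3$ factorization of $\chi\mu_B$, first through $\id_B\odot\chi$ and then through $\chi\odot\id_C$;
\item the choice $\eta_C=\chi\eta_B$;
\item the axiom checks against the epimorphisms $\chi\odot\chi\odot\chi$, $\id_{1_X}\odot\chi$ and $\chi\odot\id_{1_X}$;
\item the splitting computation.
\end{itemize}

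The step that differs is the universal property in $\Mnd(\dc{D})$. The paper notes, via \cref{rem:connected}, that coequalizers in the fibre $\dcD{X}{X}$ remain coequalizers in $\End(\dc{D})$, because they are connected colimits. This covers $\chi$ itself and the rows and columns of the $3\times3$ diagram. So a monad map $\beta$ with $\beta\phi=\beta\psi$ factors uniquely as $\gamma\chi$ in $\End(\dc{D})$. Then $\gamma$ is shown to be a monad map by cancelling $\chi\odot\chi$, which is epic in $\End(\dc{D})$.

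Your worry that $\chi$ is ``only a coequalizer in the fibre'' is therefore unfounded, but your workaround is sound. You factor $\theta$ through the cartesian lift of $D$ along $f$ in the fibration $\Mnd(\dc{D})\to\dc{D}_0$ of \cref{prop:MonComonfibred}. Cartesianness gives $\bar\theta\phi=\bar\theta\psi$. You then solve the problem in $\Mon(\dcD{X}{X})$, where only fibrewise epimorphicity of $\chi\odot\chi$ is needed.

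In effect you prove directly that a connected colimit in a fibre of a fibration is a colimit in the total category, applied to $\Mnd(\dc{D})$ rather than $\End(\dc{D})$. Your version is a little longer but self-contained: it needs neither coequalizers in $\dc{D}_0$ nor the remark. The paper's version is shorter given the remark.

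One cleanup in the last part: drop the auxiliary $t\colon C\to B$ and the appeal to split coequalizers, which you never use. A Beck-split coequalizer has $\psi s=t\chi$, and combined with $\psi s=\id_B$ that would force $\chi$ to be invertible. The hypothesis is just a common section $\phi s=\psi s=\id_B$. That is exactly what your computation (the same as the paper's) uses, with the other pair handled symmetrically.
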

\begin{proof}
  We will make use of the following $3\times 3$ diagram. 
  \begin{displaymath}
    \begin{tikzcd}[sep=4em]
      A\odot A\ar[r,shift left=1ex,"\id_{A}\odot\phi"]\ar[r,shift right=1ex,"\id_{A}\odot\psi"']\ar[d,shift left=1ex,"\phi\odot\id_{A}"]\ar[d,shift
      right=1ex,"\psi\odot\id_{A}"'] & A\odot B\ar[rr,two heads,"\id_{A}\odot\chi"]\ar[d,shift left=1ex,"\phi\odot\id_{B}"]\ar[d,shift
      right=1ex,"\psi\odot\id_{B}"'] && A\odot C\ar[d,shift left=1ex,"\phi\odot\id_{C}"]\ar[d,shift right=1ex,"\psi\odot\id_{C}"'] &  \\
      B\odot A\ar[r,shift left=1ex,"\id_{B}\odot\phi"]\ar[r,shift right=1ex,"\id_{B}\odot\psi"']\ar[dd,two heads,"\chi\odot\id_{A}"'] & B\odot
      B\ar[dr,"\mu_B"]\ar[rr,two
      heads,"\id_{B}\odot\chi"]\ar[dd,two heads,"\chi\odot\id_{B}"]  && B\odot C\ar[dr,dashed,"\xi"] & \\
       & & B\ar[rr,"\chi" near start] & & C \\
      C\odot A\ar[r,shift left=1ex,"\id_{C}\odot\phi"]\ar[r,shift right=1ex,"\id_{C}\odot\psi"'] & C\odot B\ar[rr,two heads,"\id_{C}\odot\chi"]  &&
      C\odot C\ar[ur,dashed,"\mu_C"]\ar[from=uu,crossing over,two heads,"\chi\odot\id_{C}"' near start]  & 
    \end{tikzcd}
  \end{displaymath}
  Observe that by our assumptions on $\odot$, all rows and columns are coequalizer diagrams, both in $\dcD{X}{X}$ and $\End(\dc{D})$. The former is included in \cref{def: stable local}
  of stable local coequalizers and the latter since coequalizers are connected colimits, hence the inclusion 
  $\dcD{X}{X}\to\End(\dc{D})$ 
  preserves them by \cref{rem:connected}.
  \vspace{3mm}

  \underline{(1)$\implies$(2):} If $C$ is a monad and $\chi$ is a monad morphism, then we must have $\mu_{C}(\chi\odot\chi)=\chi\mu_{B}$. This equality can be written as
  $\mu_{C}(\id_{C}\odot\chi)(\chi\odot\id_{B})=\chi\mu_{B}$ and similarly as $\mu_{C}(\chi\odot\id_{C})(\id_{B}\odot\chi)=\chi\mu_{B}$, using the interchange law due to globularity of $\chi$. From these, by precomposing with both indicated pairs, it follows immediately that $\chi\mu_{B}$ coequalizes them.
  \vspace{3mm}

  \underline{(2)$\implies$(1):} First, by the coequalizer property of the second row, there is a unique $\xi\colon B\odot C\to C\in\dcD{X}{X}$
  such that $\xi(\id_{B}\odot\chi)=\chi\mu_{B}$. Now the equality $\chi\mu_{B}(\phi\odot\id_{B})=\chi\mu_{B}(\psi\odot\id_{B})$, which we have by
  assumption, can be rewritten as $\xi(\id_{B}\odot\chi)(\phi\odot\id_{B})=\xi(\id_{B}\odot\chi)(\psi\odot\id_{B})$ and then further as
  $\xi(\phi\odot\id_{C})(\id_{A}\odot\chi)=\xi(\psi\odot\id_{C})(\id_{A}\odot\chi)$. But notice that $\id_{A}\odot\chi$ is an epimorphism in $\dcD{X}{X}$ since $A\odot-$ preserves coequalizers by assumption, so that we can deduce that $\xi(\phi\odot\id_{C})=\xi(\psi\odot\id_{C})$. Now using the coequalizer property of the third
  column, $(\exists!\mu_{C}\colon C\odot C\to C\in\dcD{X}{X})\mu_{C}(\chi\odot\id_{C})=\xi$. We also define $\eta_{C}\coloneqq\chi\eta_{B}$ and
  claim that $(C,\mu_{C},\eta_{C})$ is a monad in $\dc{D}$.

  Note that the proposed definition of $\eta_{C}$ is forced upon us if $\chi$ is to be a monad morphism, as in that case we must have
  $\chi\eta_{B}=\eta_{C}1_{\id_{X}}=\eta_{C}\id_{1_X}=\eta_{C}$. We also have
  $\mu_{C}(\chi\odot\chi)=\mu_{C}(\chi\odot\id_{C})(\id_{B}\odot\chi)=\xi(\id_{B}\odot\chi)=\chi\mu_{B}$. So $\chi$ will indeed be a monad morphism, as
  soon as we show that $C$ is a monad.

  For one of the unit axioms for $C$, we argue as follows:
  \begin{align*}
    \mu_{C}(\eta_{C}\odot\id_{C})(\id_{1_X}\odot\chi)
    &=\mu_{C}((\chi\mu_{B})\odot\id_{C})(\id_{1_X}\odot\chi)=\mu_{C}(\chi\odot\id_{C})(\eta_{B}\odot\id_{C})(\id_{1_X}\odot\chi) \\
    &=\xi(\id_{B}\odot\chi)(\eta_{B}\odot\id_{B})=\chi\mu_{B}(\eta_{B}\odot\id_{B})=\chi\lambda_{B}=\lambda_{C}(\id_{1_X}\odot\chi)
  \end{align*}
  and since $\id_{1_X}\odot\chi$ is an epimorphism, 
  we deduce that
  $\mu_{C}(\eta_{C}\odot\id_{C})=\lambda_{C}$; similarly for the other unit axiom.
  Finally, for the associativity axiom we argue as follows:
  \begin{align*}
    \mu_{C}(\id_{C}\odot\mu_{C})(\chi\odot\chi\odot\chi) &=\mu_{C}(\id_{C}\odot\mu_{C})(\chi\odot\id_{C^{2}})(\id_{B}\odot\chi\odot\chi) \\
    &=\mu_{C}(\chi\odot\id_{C})(\id_{B}\odot\mu_{C})(\id_{B}\odot\chi\odot\chi) \\
    &=\xi(\id_{B}\odot\mu_{C})(\id_{B}\odot\chi\odot\chi)=\xi(\id_{B}\odot\mu_{C}(\chi\odot\chi)) \\
    &=\xi(\id_{B}\odot\chi\mu_{B})=\xi(\id_{B}\odot\chi)(\id_{B}\odot\mu_{B})=\chi\mu_{B}(\id_{B}\odot\mu_{B}) \\
    &=\chi\mu_{B}(\mu_{B}\odot\id_{B})=\mu_{C}(\chi\odot\chi)(\mu_{B}\odot\id_{B}) \\
    &=\mu_{C}(\chi\mu_{B}\odot\chi)=\mu_{C}(\mu_{C}(\chi\odot\chi)\odot\chi) \\
    &=\mu_{C}(\mu_{C}\odot\id_{C})(\chi\odot\chi\odot\id_{C})(\id_{B^{2}}\odot\chi) \\
    &=\mu_{C}(\mu_{C}\odot\id_{C})(\chi\odot\chi\odot\chi)
  \end{align*}
  and again observe that $\chi\odot\chi\odot\chi$ is an epimorphism by the assumptions on $\odot$.
  
  In order to check that $C$ remains the coequalizer in $\Mnd(\dc{D})$, suppose that 
  \begin{displaymath}
    \begin{tikzcd}
      X\ar[r,bul,"B"]\ar[d,"b"']\ar[dr,phantom,"\Two\beta"] & X\ar[d,"b"] \\
      Y\ar[r,bul,"D"'] & Y
    \end{tikzcd}
  \end{displaymath}
  is a morphism in $\Mnd(\dc{D})$ such that $\beta\phi=\beta\psi$. By the coequalizer property in $\End(\dc{D})$ there is a unique $\gamma\colon
  C\to D\in\End(\dc{D})$ such that $\gamma\chi=\beta$. It is then easy to verify that $\gamma$ is a monad morphism, using that $\beta$ is such and that $\chi\odot\chi$ is an epimorphism.


  Finally, assume there exists a splitting namely $\sigma\colon B\to A\in\dcD{X}{X}$ such that $\phi\sigma=\id_{B}=\psi\sigma$. Then 
  \begin{align*}
    \chi\mu_{B}(\phi\odot\id_{B}) &=\chi\mu_{B}(\phi\odot(\phi\sigma))=\chi\mu_{B}(\phi\odot\phi)(\id_{A}\odot\sigma) \\
    &=\chi\phi\mu_{A}(\id_{A}\odot\sigma)=\chi\psi\mu_{A}(\id_{A}\odot\sigma) \\
    &=\chi\mu_{B}(\psi\odot\psi)(\id_{A}\odot\sigma)=\chi\mu_{B}(\psi\odot(\psi\sigma)) \\
    &=\chi\mu_{B}(\psi\odot\id_{B})
  \end{align*}
  A similar calculation yields $\chi\mu_{B}(\id_{B}\odot\phi)=\chi\mu_{B}(\id_{B}\odot\psi)$, so condition (2) indeed holds.
\end{proof}


\begin{prop}\label{prop:Mnd(D)hascoequalizers}
  Let $\dc{D}$ be a fibrant double category with stable local colimits. Then the category $\Mnd(\dc{D})$ has all
  coequalizers.
\end{prop}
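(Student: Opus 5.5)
The plan is to reduce the problem to coequalizers of monoids in a single fibre $\Mon(\dcD{Z}{Z})$, where \cref{lem:BettiCarboniStreetWalters} applies. I would handle the change of base using the fibration $\Mnd(\dc{D})\to\dc{D}_0$ of \cref{prop:MonComonfibred} together with free monads, which exist by \cref{prop:freemonadfunctorexistence1}.

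\emph{Step 1: coequalizers inside a fibre.} Fix $Z$ and a pair $\phi,\psi\colon A\to B$ in $\Mon(\dcD{Z}{Z})$. I would adapt Proposition 5 of \cite{Varthrenr} by building a chain $B=B_0\to B_1\to B_2\to\cdots$ of epimorphisms in $\dcD{Z}{Z}$, constructed as follows.
\begin{itemize}
\item $B_1$ is the coequalizer in $\dcD{Z}{Z}$ of $\phi,\psi$.
\item $B_{n+1}$ is the joint coequalizer of the pairs that measure the failure of condition (2) of the lemma at stage $n$. These are the images under $B_n\odot B_n\to B_n$ (the multiplication inherited only partially) of $\phi\odot\id$, $\psi\odot\id$ and $\id\odot\phi$, $\id\odot\psi$. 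Equivalently, one quotients $B_n$ by the relations generated by $\chi_n\mu_B$ on the images of these pairs.
\end{itemize}
Let $C=\mathrm{colim}_n B_n$, an $\omega$-chain colimit in $\dcD{Z}{Z}$. Because local colimits are stable, $C\odot C$ is the colimit of $B_n\odot B_n$. Hence the induced map $B\odot B\to C$ coequalizes both pairs in condition (2), since any relation is already killed at some finite stage. \cref{lem:BettiCarboniStreetWalters} then shows that $C$ is a monad and that $B\to C$ is the coequalizer in $\Mnd(\dc{D})$, hence also in the fibre. The universal property of $C$ would be checked stage by stage: any monoid map $B\to D$ coequalizing $\phi,\psi$ factors uniquely through each $B_n$, because it is multiplicative.

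\emph{Step 2: pushforward of monads.} For $e\colon Y\to Z$, the reindexing $e^*=\wc{e}\odot-\odot\wh{e}$ restricts to monoids, and I want a left adjoint $e_!\colon\Mon(\dcD{Y}{Y})\to\Mon(\dcD{Z}{Z})$. For a monoid $B$, I would take $e_!B$ to be the coequalizer, computed as in Step 1, of the two evident maps
\[
F(\wh{e}\odot UFUB\odot\wc{e})\rightrightarrows F(\wh{e}\odot UB\odot\wc{e}),
\]
where $F$ is the fibrewise free monoid. One map comes from the counit $FUB\to B$; the other comes from the transpose of the composite $UFUB\to U e^*e_{!}^{\mathrm{free}}$ built from the unit of $\wh{e}\odot-\odot\wc{e}\dashv e^*$ in \eqref{eq:adj}. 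This is the usual construction of a left adjoint between categories of algebras over coequalizers (Linton/Butler adjoint lifting). It needs only coequalizers in the target fibre, supplied by Step 1.

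\emph{Step 3: assembling the coequalizer.} Take monad maps $\alpha,\beta\colon A\to B$ with vertical parts $f,g\colon X\to Y$, and let $e\colon Y\to Z$ be their coequalizer in $\dc{D}_0$. Since $\Mnd(\dc{D})$ is fibred, a monad map from $B$ to $D$ over $e$ is the same as a globular monoid map $B\to e^*D$, i.e.\ a map $e_!B\to D$. Likewise, $\alpha$ and $\beta$ transpose to globular monoid maps $(ef)_!A\rightrightarrows e_!B$ in the fibre over $Z$, where $ef=eg$. Coequalize these in $\Mon(\dcD{Z}{Z})$ by Step 1 to obtain $E$, and equip it with the composite $B\to e_!B\to E$ lying over $e$. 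Checking the universal property is then a routine transpose argument. Any monad map $B\to D'$ over some $h$ that coequalizes $\alpha,\beta$ has $hf=hg$, so $h$ factors uniquely as $h=ke$. Reindexing along $k$ then reduces the required factorization to the fibre over $Z$.

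The main obstacle is Step 1. The subtle points are to formulate the $n$-th stage correctly, since $B_n$ carries no monoid structure yet, and to justify that condition (2) holds in the colimit. The latter uses stability of $\omega$-colimits under $\odot$ in both variables. I would try first to phrase the chain as the iterated coequalizers in \cite[Prop.~5]{Varthrenr}, replacing matrices by $\dcD{Z}{Z}$ verbatim.
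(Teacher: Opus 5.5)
Your Steps 2 and 3 are sound. Once every fibre $\Mon(\dcD{Z}{Z})$ has coequalizers, adjoint lifting gives $e_!\dashv e^*$, so $\Mnd(\dc{D})\to\dc{D}_0$ becomes a bifibration. Coequalizers then assemble from those in $\dc{D}_0$ and in the fibres as you describe.

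The gap is in Step 1, which as written does not produce a monad, for two reasons.
First, \cref{lem:BettiCarboniStreetWalters} needs $\chi\colon B\to C$ to be the coequalizer in $\dcD{Z}{Z}$ of the very pair for which condition (2) is tested. Your $C=\mathrm{colim}_n B_n$ is a proper quotient of the coequalizer of $(\phi,\psi)$, so verifying (2) for $(\phi,\psi)$ against $C$ does not let you invoke the lemma.
Second, suppose stage $n+1$ only kills the images of $\phi\odot\id_B,\psi\odot\id_B$ and $\id_B\odot\phi,\id_B\odot\psi$ under $\chi_n\mu_B$. These relations already hold in $B_2$, so the chain stabilises at $B_2$. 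Only left and right multiples of the relations have then been killed, never two-sided ones, and $B_2$ need not carry any monoid structure.
Concretely, in $\dc{A}\nc{b}$ take $B=\mathbb{Z}\langle x,y,z\rangle$, $A=\mathbb{Z}[t]$, $\phi(t)=x$, $\psi(t)=0$. Then $B_2=B/(xB+Bx)$, and $xB+Bx$ is not a two-sided ideal since $yxz\notin xB+Bx$. The actual coequalizer is $B/(x)$.

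There are two ways to repair Step 1.
\begin{itemize}
\item Let the relations themselves grow. Most simply, take a single coequalizer $\chi$ in $\dcD{Z}{Z}$ of the pair $u,v\colon\sum_{m,n\geq 0}B^{\odot m}\odot A\odot B^{\odot n}\rightrightarrows B$, built from $\phi$ (resp.\ $\psi$) and iterated multiplication. Stability of coproducts plus associativity show that $\chi\mu_B$ coequalizes $(u\odot\id_B,v\odot\id_B)$ and $(\id_B\odot u,\id_B\odot v)$. The proof of (2)$\Rightarrow$(1) in the lemma, and its universality clause, never use that the pair consists of monoid maps, so the lemma's argument goes through for $(u,v)$.
\item Use the lemma's last clause, which already handles commonly split pairs; this covers the reflexive pair in your Step 2. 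Linton's theorem for the monadic $\Mon(\dcD{Z}{Z})\to\dcD{Z}{Z}$ then gives all fibrewise coequalizers.
\end{itemize}
Note also that the argument the paper adapts from \cite{Varthrenr} is not an iterated quotient, so your fallback there will not supply the missing chain.

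The paper needs neither general fibrewise coequalizers nor the opfibration structure. It coequalizes the underlying endo-cells $(U\phi,U\psi)$ and $(UFU\phi,UFU\psi)$ in $\End(\dc{D})$, where parallel colimits absorb the change of base: both coequalizers lie over the coequalizer $q$ of $f,g$. It then applies the global free functor $F$ and uses the canonical free presentations of $A$ and $B$. The only pair that must be coequalized inside a fibre is the globular pair $\xi,\zeta\colon FD\rightrightarrows FC$, which has a common splitting, so the lemma applies verbatim and a $3\times3$ argument concludes. Your repaired route is longer, but it proves more: that $\Mnd(\dc{D})\to\dc{D}_0$ is also an opfibration.
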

\begin{proof}
  Throughout the proof, $U$ will denote the forgetful functor $\Mnd(\dc{D})\to\End(\dc{D})$ and $F\colon\End(\dc{D})\to\Mnd(\dc{D})$ will
  be its left adjoint, whose existence was established in \cref{prop:freemonadfunctorexistence1} under these assumptions. The counit of this adjunction will be denoted by
  $\epsilon$. Observe that, since the adjoint $F$ was established by an appeal to \cref{thm:totaladjointthm}, by an inspection of the proof we also have that the components of
  $\epsilon$ are globular morphisms.

  Consider a pair of monad morphisms in $\dc{D}$ as follows
  \begin{displaymath}
    \begin{tikzcd}
      X\ar[r,bul,"A"]\ar[d,"f"']\ar[dr,phantom,"\Two\phi"] & X\ar[d,"f"] \\
      Y\ar[r,bul,"B"'] & Y
    \end{tikzcd}
    \qquad
    \begin{tikzcd}
      X\ar[r,bul,"A"]\ar[d,"g"']\ar[dr,phantom,"\Two\psi"] & X\ar[d,"g"] \\
      Y\ar[r,bul,"B"'] & Y
    \end{tikzcd}
  \end{displaymath}
  Then consider the coequalizers $\gamma$ and $\delta$ in $\End(\dc{D})$ of the pairs $(U\phi,U\psi)$ and $(UFU\phi,UFU\psi)$ respectively
  \begin{displaymath}
    \begin{tikzcd}
      Y\ar[r,bul,"UB"]\ar[d,"q"']\ar[dr,phantom,"\Two\gamma"] & Y\ar[d,"q"] \\
      Q\ar[r,bul,"C"'] & Q
    \end{tikzcd}
    \qquad
    \begin{tikzcd}
      Y\ar[r,bul,"UFUB"]\ar[d,"q"']\ar[dr,phantom,"\Two\delta"] & Y\ar[d,"q"] \\
      Q\ar[r,bul,"D"'] & Q
    \end{tikzcd}
  \end{displaymath}
We will use the free image of those two coequalizers in order to construct the coequalizer $E$ of $\phi$ and $\psi$ in $\Mnd(\dc{D})$. 
In the process, we will make use of the above lemma that, however, refers only to \emph{globular} monad maps.

We should comment here on why we can assume that $\gamma$ and $\delta$ have the same vertical components. First, recall that having stable local colimits implies the existence of parallel colimits by \cref{rmk:term_conv}. Now, as in the proof of \cref{lem:(co)limits in End(D)}, not only does $\End(\dc{D})\to\dc{D}_0$ preserve colimits, namely the vertical component of both 2-cells must be a coequalizer for $(f,g)$ in $\dc{D}_0$, but also it can be thought of strictly doing so due to the opfibration structure of source/target, hence we can use the same $q$.  

We now obtain the following commutative diagram in $\Mnd(\dc{D})$. In more detail, the top two rows are coequalizers in $\Mnd(\dc{D})$ because $F$ is a left adjoint, and $\xi$ and $\zeta$ are induced by the
coequalizer property of $F\delta$ due to the commutativities involving the pairs
 $(\epsilon_{FUA},\epsilon_{FUB})$ and $(FU\epsilon_{A},FU\epsilon_{B})$ respectively.
Moreover, the first two columns lie in
  $\Mon(\dcD{X}{X},\odot,1_X)$ and $\Mon(\dcD{Y}{Y},\odot,1_Y)$ respectively, and are in fact the canonical presentations of $A$ and $B$ as
  Eilenberg-Moore algebras in these monadic categories over $\dcD{X}{X}$ and $\dcD{X}{X}$ respectively. In particular, these
  coequalizers are preserved by $U$.
  \begin{equation}\label{eq:3x3MndD}
    \begin{tikzcd}[sep=4em]
      FUFUA\ar[r,shift left=1ex,"FUFU\phi"]\ar[r,shift right=1ex,"FUFU\psi"']\ar[d,shift left=1ex,"\epsilon_{FUA}"]\ar[d,shift
      right=1ex,"FU\epsilon_{A}"'] & FUFUB\ar[d,shift left=1ex,"\epsilon_{FUB}"]\ar[d,shift right=1ex,"FU\epsilon_{B}"']\ar[r,two heads,"F\delta"] &
      FD\ar[d,shift left=1ex,"\xi"]\ar[d,shift right=1ex,"\zeta"']  \\
      FUA\ar[r,shift left=1ex,"FU\phi"]\ar[r,shift right=1ex,"FU\psi"']\ar[d,two heads,"\epsilon_{A}"'] & FUB\ar[d,two
      heads,"\epsilon_{B}"]\ar[r,two heads,"F\gamma"'] & FC \\
      A\ar[r,shift left=1ex,"\phi"]\ar[r,shift right=1ex,"\psi"'] & B &
    \end{tikzcd}
  \end{equation}

  We also observe that the morphisms $\xi,\zeta$ are globular. To see this, suppose that the vertical component of $\xi$ is $\xi_0\colon Q\to
  Q\in\dc{D}_0$. Since $\xi\circ F\delta=F\gamma\circ\epsilon_{FUB}$ and $\epsilon_{FUB}$ is itself globular, taking vertical components yields
  $\xi_{0}q=q$. But $q$ is a coequalizer in $\dc{D}_0$, so in particular an epimorphism, whence we deduce that $\xi_0=\id_{Q}$. A similar argument
  applies to $\zeta$, this time using that $FU\epsilon_{B}$ is globular.

  Applying the forgetful functor $U$ to the above diagram and considering the coequalizer $E$ of $U\zeta$ and $U\xi$, we have the following diagram in $\End(\dc{D})$, where the morphism $\theta$ is induced by 
  the coequalizer property of the second column.
  \begin{displaymath}
    \begin{tikzcd}[sep=4em]
      UFUFUA\ar[r,shift left=1ex,"UFUFU\phi"]\ar[r,shift right=1ex,"UFUFU\psi"']\ar[d,shift left=1ex,"U\epsilon_{FUA}"]\ar[d,shift
      right=1ex,"UFU\epsilon_{A}"'] & UFUFUB\ar[d,shift left=1ex,"U\epsilon_{FUB}"]\ar[d,shift right=1ex,"UFU\epsilon_{B}"']\ar[r,"UF\delta"] &
      UFD\ar[d,shift left=1ex,"U\xi"]\ar[d,shift right=1ex,"U\zeta"']  \\
      UFUA\ar[r,shift left=1ex,"UFU\phi"]\ar[r,shift right=1ex,"UFU\psi"']\ar[d,two heads,"U\epsilon_{A}"'] & UFUB\ar[d,two
      heads,"U\epsilon_{B}"]\ar[r,"UF\gamma"'] & UFC\ar[d,two heads,"e"] \\
      UA\ar[r,shift left=1ex,"U\phi"]\ar[r,shift right=1ex,"U\psi"'] & UB\ar[r,dashed,"\theta"'] & E
    \end{tikzcd}
  \end{displaymath}

We will now show that $E$ is the coequalizer of the original morphisms $\phi$ and $\psi$, namely that it obtains a monad structure. For that, we will apply \cref{lem:BettiCarboniStreetWalters} to the globular $\zeta$ and $\xi$, by showing that they have a common splitting in $\dcD{X}{X}$. First of all,  notice that the pairs of morphisms in the first two columns above both have globular common splittings in $\Mnd(\dc{D})$, given respectively by $F\eta_{UA}$ and $F\eta_{UB}$. Moreover, by naturality of $\eta$ we have
  \begin{align*}
    F\delta\circ F\eta_{UB}\circ FU\phi &=F(\delta\circ\eta_{UB}\circ U\phi)=F(\delta\circ UFU\phi\circ\eta_{UA})=F(\delta\circ UFU\psi\circ\eta_{UA})=F(\delta\circ\eta_{UB}\circ U\psi) \\
    &=F\delta\circ F\eta_{UB}\circ FU\psi
  \end{align*}
  By the coequalizer property of $FC$ this implies the existence of a unique $\sigma\colon FC\to FD\in\Mnd(\dc{D})$ such that $\sigma\circ
  F\gamma=F\delta\circ F\eta_{UB}$. Then we have that
  \begin{displaymath}
    \xi\circ\sigma\circ F\gamma=\xi\circ F\delta\circ F\eta_{UB}=F\gamma\circ\epsilon_{FUB}\circ F\eta_{UB}=F\gamma\circ\id_{FUB}=F\gamma
  \end{displaymath}
  \begin{displaymath}
    \zeta\circ\sigma\circ F\gamma=\zeta\circ F\delta\circ F\eta_{UB}=F\gamma\circ FU\epsilon_{B}\circ F\eta_{UB}=F\gamma\circ\id_{FUB}=F\gamma
  \end{displaymath}
  and because $F\gamma$ is an epimorphism in $\Mnd(\dc{D})$ we conclude that $\xi\sigma=\id_{FC}=\zeta\sigma$, hence $\sigma$ is a common splitting of $\zeta$ and $\xi$ also in $\dcD{X}{X}$. 

As a result, \cref{lem:BettiCarboniStreetWalters} applies and endows $E$ with a monad structure such that $e$ is the coequalizer of $\xi,\zeta$ in $\Mnd(\dc{D})$. Going back to \cref{eq:3x3MndD} in $\Mnd(\dc{D})$, which is now completed with $E$, we have that all three columns and the first two rows are coequalizers. Thus, it follows that the bottom row is also a coequalizer in $\Mnd(\dc{D})$.
\end{proof}

By well-known results of Linton \cite{Linton}, a category of Eilenberg-Moore algebras over a cocomplete base category is itself cocomplete as
soon as it has (reflexive) coequalizers. Hence, we immediately deduce the following.

\begin{cor}\label{cor:Mnd(D)cocomplete}
	Let $\dc{D}$ be a fibrant double category which has stable local colimits. Then the global category of monads
	$\Mnd(\dc{D})$ is cocomplete.
\end{cor}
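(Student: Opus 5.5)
The plan is to combine the monadicity result with Linton's theorem, as the sentence preceding the statement suggests. Since $\dc{D}$ has stable local colimits, in particular stable local countable coproducts, \cref{prop:freemonadfunctorexistence1} gives a left adjoint $F\dashv U\colon\Mnd(\dc{D})\to\End(\dc{D})$. \cref{thm:Mnd(D)monadic} (equivalently \cref{cor:monadicity}(1)) then identifies $\Mnd(\dc{D})$, up to equivalence, with the Eilenberg--Moore category of algebras for the monad $UF$ on $\End(\dc{D})$.

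Next I will check that the base $\End(\dc{D})$ is cocomplete. By the convention of \cref{rmk:term_conv}, stable local colimits include colimits in $\dc{D}_0$, and together these yield parallel colimits of every small shape. \cref{lem:(co)limits in End(D)} then gives all small colimits in $\End(\dc{D})$.

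Finally I will invoke Linton's result: a category monadic over a cocomplete category is cocomplete as soon as it has coequalizers of reflexive pairs. \cref{prop:Mnd(D)hascoequalizers} provides all coequalizers in $\Mnd(\dc{D})$ under exactly our hypotheses, so these suffice. Concretely, coproducts of monads $A_i$ are built as the coequalizer in $\Mnd(\dc{D})$ of the reflexive pair $F(\sum_i UFUA_i)\rightrightarrows F(\sum_i UA_i)$. One map of the pair is induced by the counits $\epsilon_{A_i}$. The other is induced by $\epsilon_{F(\sum_i UA_i)}$ precomposed with $F$ applied to the coproduct of the maps $UFUA_i\to UF(\sum_i UA_i)$. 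Since coproducts together with coequalizers give all small colimits, this finishes the argument.

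The main obstacle has already been isolated earlier in the paper, namely the construction of coequalizers in \cref{prop:Mnd(D)hascoequalizers}. What remains is a bookkeeping check: the equivalence with Eilenberg--Moore algebras must transport colimits, which is automatic for an equivalence of categories. I also need the base to have all colimits and not merely the connected ones. For this I rely on \cref{lem:(co)limits in End(D)} applied to parallel colimits of arbitrary small shape, rather than on the fibrewise colimits in $\dcD{X}{X}$, which would not agree with coproducts in $\End(\dc{D})$.
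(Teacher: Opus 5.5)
Your proposal is correct and follows the paper's own argument. You get cocompleteness of $\End(\dc{D})$ from \cref{lem:(co)limits in End(D)} via parallel colimits, use monadicity of $\Mnd(\dc{D})$ over $\End(\dc{D})$, take coequalizers from \cref{prop:Mnd(D)hascoequalizers}, and conclude with Linton's theorem. Your explicit reflexive-pair construction of coproducts simply unpacks the Linton argument that the paper cites without detail.
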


\begin{proof}
	By \cref{lem:(co)limits in End(D)}, under these assumptions\footnote{Recall that due to our terminology convention of \cref{rmk:term_conv}, the existence of stable local colimits implies the existence of parallel colimits.} $\End(\dc{D})$ is cocomplete, and the monadic $\Mnd(\dc{D})$ has all coequalizers by  \cref{prop:Mnd(D)hascoequalizers}.
\end{proof}


\begin{ex}
	If $\ca{C}$ is a category with pullbacks and universal small colimits, then $\dc{S}\nc{pan}(\ca{C})$ has stable local colimits. So for such a $\ca{C}$, by \cref{thm:Mnd(D)monadic} and \cref{cor:Mnd(D)cocomplete} we have the well-known fact that the category of internal categories $\Cat(\ca{C})$ is cocomplete and monadic over the category of internal graphs $\nc{Grph}(\ca{C})$.
\end{ex}


\begin{ex}
	Consider a monoidal category $\ca{V}$ which is cocomplete and such that colimits are preserved by $\otimes$ in each variable. Then $\VMMat$ has stable local colimits, so the preceding two results again apply to yield that $\VCat$ is cocomplete and monadic over $\VGrph$. Once more, in the special case when $\ca{V}$ is symmetric monoidal closed, these recover results of H. Wolff \cite[Theorem 2.13, Corollary 2.14]{Wolff}.
\end{ex}

Before moving on to discussing local presentability of the category $\Mnd(\dc{D})$, we record also the following general observation concerning colimits in the category of monads. This gives a different type of assumptions on $\dc{D}$ that would ensure the existence of a particular type of colimits in $\Mnd(\dc{D})$ and should be compared to the discussion in \cref{rmk: on lp defi} (regarding preservation of filtered colimits).

\begin{prop}\label{MndColimits}
	If a fibrant double category $\dc{D}$ has parallel $\ca{I}$-colimits and the functors $-\odot-\colon\End(\dc{D})\to\End(\dc{D})$ and $1\colon\dc{D}_0\to\dc{D}_1$ preserve $\ca{I}$-colimits, then $\Mnd(\dc{D})$ has all $\ca{I}$-colimits
	and the inclusion $\Mnd(\dc{D})\to\End(\dc{D})$ creates them.
\end{prop}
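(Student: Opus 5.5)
The plan is to mirror the standard argument that monoids in a monoidal category inherit any colimits preserved by the tensor product. Here the tensor is replaced by the functor $-\odot-\colon\End(\dc{D})\to\End(\dc{D})$ and the unit object by $1\colon\dc{D}_0\to\dc{D}_1$.

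Start with a diagram $D\colon\ca{I}\to\Mnd(\dc{D})$ with $D_i=(A_i\colon X_i\bular X_i,\mu_i,\eta_i)$. By \cref{lem:(co)limits in End(D)}, the composite $UD$ has a colimit in $\End(\dc{D})$, created from $\dc{D}_1$. Write it as $q_i\colon A_i\Rightarrow A$ with $A\colon X\bular X$, where the vertical components $x_i\colon X_i\to X$ form a colimit cocone in $\dc{D}_0$. By hypothesis, $(q_i\odot q_i)_i$ exhibits $A\odot A$ as the colimit of $(A_i\odot A_i)_i$ in $\End(\dc{D})$. Similarly $(1_{x_i})_i$ exhibits $1_X$ as the colimit of $(1_{X_i})_i$. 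Since $1$ lands in $\End(\dc{D})$ and $\End(\dc{D})\to\dc{D}_1$ creates these colimits, this colimit can be read in either category.

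The cocone $(q_i\circ\mu_i)_i\colon A_i\odot A_i\Rightarrow A$ is compatible with the diagram, because each transition map of $D$ is a monad morphism. So there is a unique $\mu\colon A\odot A\Rightarrow A$ in $\End(\dc{D})$ with $\mu\circ(q_i\odot q_i)=q_i\circ\mu_i$. Likewise there is a unique $\eta\colon 1_X\Rightarrow A$ with $\eta\circ 1_{x_i}=q_i\circ\eta_i$.

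These 2-cells must be globular. The vertical component of $\mu$, call it $m\colon X\to X$, satisfies $m\circ x_i=x_i$ for all $i$, and the $x_i$ are jointly epimorphic as a colimit cocone in $\dc{D}_0$. Hence $m=\id_X$, and the same argument applies to $\eta$.

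Associativity and the unit laws are then checked by precomposing with the cocones. For associativity one uses $q_i\odot q_i\odot q_i$. This is jointly epic because $(-\odot-)$ preserves the colimit, and applying that twice shows that $((A_i\odot A_i)\odot A_i)_i$ has colimit $(A\odot A)\odot A$. The associator and unitors are natural in $\dc{D}_1$, so they commute with the cocone maps. The verification then reduces to the axioms for each $A_i$. For the unit law one uses that $(1_{x_i}\odot q_i)_i$ is a colimit cocone, which follows from the two preservation hypotheses together with $\odot$ being a functor on $\End(\dc{D})$. The defining equations make each $q_i$ a monad morphism.

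It remains to show that $(q_i)$ is a colimit cocone in $\Mnd(\dc{D})$. Given a cocone of monad morphisms $\beta_i\colon A_i\Rightarrow B$, take the unique $\beta\colon A\Rightarrow B$ in $\End(\dc{D})$ with $\beta q_i=\beta_i$. Compatibility of $\beta$ with multiplication and unit follows by checking after precomposing with the jointly epic families $q_i\odot q_i$ and $1_{x_i}$. Uniqueness of the monad structure making the $q_i$ monad morphisms is forced by the same joint epicness. Together these give creation of colimits.

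The main obstacle I expect is the technical point of whether the colimits in $\End(\dc{D})$ of $A_i\odot A_i$ and of three-fold composites are really given by $q_i\odot q_i$ and $q_i\odot q_i\odot q_i$. This needs care about how the hypothesis ``$-\odot-$ preserves $\ca{I}$-colimits'' is interpreted: it should be read for the diagonal $i\mapsto A_i\odot A_i$, which is the sense in which it is used here. It also needs care with mixing the unit functor into the composites.
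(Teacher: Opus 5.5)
\paragraph{Overall.} Your proposal follows the paper's proof essentially step for step. You take the colimit of the underlying endo-1-cells, induce $\mu$ and $\eta$ from the preserved colimits $(q_i\odot q_i)_i$ and $(1_{x_i})_i$, and check the axioms and the universal property against jointly epic families. Your globularity argument for $\mu,\eta$ makes explicit a point the paper leaves implicit.

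\paragraph{The gap.} One step does not go through as justified. Under the diagonal reading of the hypothesis, which both you and the paper adopt ($T(M)=M\odot M$ preserves $\ca{I}$-colimits), applying it twice shows that $\big((q_i\odot q_i)\odot(q_i\odot q_i)\big)_i$ is a colimit cocone. That is a four-fold composite, not $\big((q_i\odot q_i)\odot q_i\big)_i$. Likewise, neither hypothesis says anything directly about mixed composites such as $1_{X_i}\odot A_i$. So the joint epicness on which your associativity check rests is not yet established. The paper's proof passes over this same point silently.

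\paragraph{How to repair it.} You can fix this using the monad structure itself.
\begin{itemize}
\item The unit laws need no preservation hypothesis. Naturality of the unitors gives $1_{x_i}\odot q_i=\ell_A^{-1}\circ q_i\circ\ell_{A_i}$ and $q_i\odot 1_{x_i}=r_A^{-1}\circ q_i\circ r_{A_i}$. These families are jointly epic because the $q_i$ are, so prove the unit laws first.
\item The unit law $\mu\circ(\id_A\odot\eta)=r_A$ then exhibits $(A\odot A)\odot A$ as a retract of $(A\odot A)\odot(A\odot A)$. Take $p=\id_{A\odot A}\odot\mu$ and $s=\id_{A\odot A}\odot\big((\id_A\odot\eta)\circ r_A^{-1}\big)$; then $p\circ s=\id$.
\item By interchange, $p\circ\big((q_i\odot q_i)\odot(q_i\odot q_i)\big)=\big((q_i\odot q_i)\odot q_i\big)\circ(\id_{A_i\odot A_i}\odot\mu_i)$.
\item Now suppose two 2-cells $f,g\colon(A\odot A)\odot A\Rightarrow B$ in $\End(\dc{D})$ agree on every $(q_i\odot q_i)\odot q_i$. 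Then $fp$ and $gp$ agree on the four-fold cocone. That cocone is jointly epic, being $T$ applied twice to $(q_i)_i$, so $fp=gp$. Hence $f=fps=gps=g$.
\end{itemize}
So $\big((q_i\odot q_i)\odot q_i\big)_i$ is jointly epic, and your associativity computation goes through unchanged.
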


\begin{proof}
	Suppose $D:\ca{I}\to \Mnd(\dc{D})$ is a diagram in $\Mnd(\dc{D})$. For any $i\in \ca{I}$ we get a monad
	\[
	(A_i:X_i\bular X_i,\mu_i,\eta_i)
	\]
	and for each arrow $\alpha:i\to j$ in $\ca{I}$, a morphism of monads
	\[
	\begin{tikzcd}
		X_i \arrow[r, bul, "A_i" {name=Di}] \arrow[d, "\mathfrak{s}(D\alpha)"']\ar[dr, phantom, "\Two D\alpha"] & X_i \arrow[d, "\mathfrak{t}(D\alpha)"] \\
		X_j \arrow[r, bul, "A_j"' {name=Dj}] & X_j
	\end{tikzcd}
	\]
	We will construct the colimit $(A:X\bular X,\mu,\eta)$ of $D$ in three steps.
	
	First consider $D$ as a diagram in $\End(\dc{D})$, i.e. take $\ca{I}\to\Mnd(\dc{D})\to \End(\dc{D})$.
	Since $\dc{D}$ is fibrant, the new diagram has a colimit by \cref{lem:(co)limits in End(D)}, say $(A:X\bular X, q_i:A_i\Rightarrow A)$. As in said proof, $\dc{D}$ having parallel colimits implies that $X$ actually \textit{is} the colimit of the diagram $\ca{I}\to\Mnd(\dc{D})\to \End(\dc{D})\to \dc{D}_0$ where $\End(\dc{D})\to\dc{D}_0$ is either $\mathfrak{s}$ or $\mathfrak{t}$.
	
	We can endow $A$ with a monad structure as follows. Since $-\odot -$ preserves $\ca{I}$-colimits, $(A\odot A, q_i\odot q_i)$ is a colimiting cocone of the diagram $\ca{I}\to \Mnd(\dc{D})\to \End(\dc{D})\xrightarrow{-\odot-} \End(\dc{D})$. But using each $A_i$'s multiplication, $(A:X\bular X,q_i\circ \mu_i)$ forms another cocone for this diagram since for any $\alpha:i\to j$ in $\ca{I}$ we get
	\begin{alignat*}{2}
		q_j\circ \mu_j\circ (D\alpha\odot Da)&= q_j\circ D_a \circ \mu_i\ \ \ &&\text{as $Da$ is a monad morphism}\\
		&=q_i\circ \mu_i\ \ \ &&\text{since $(A,q_i)$ is a cocone of $D$.}
	\end{alignat*}
	By universality of $A\odot A$, there is a unique $\mu: A\odot A\Rightarrow A$ such that $\mu (q_i\odot q_i)=q_i \mu_i$. We can verify that $\mu$ is associative using the associativity of the $\mu_i$'s and by definition of $\mu$. 
	
	Similarly, since $1:\dc{D}_0\to\dc{D}_1$ preserves $\ca{I}$-colimits, $(1_X:X\bular X,1_{q_i})$ is a colimiting cocone of the diagram $\ca{I}\to \Mnd(\dc{D})\to \End(\dc{D})\to \dc{D}_0\to \dc{D}_1$. But $(A:X\bular X, q_i \eta_i)$ is another cocone since
	\begin{alignat*}{2}
		q_i\circ \eta_j\circ 1_{\mathfrak{s}(D\alpha)}&= q_j\circ D\alpha\circ \eta_i\ \ \ &&\text{as $D\alpha$ is a monad morphism}\\
		&=q_i\circ \eta_i\ \ \ &&\text{since $(A,q_i)$ is a cocone of $D$.}
	\end{alignat*}
	By universality of $1_X$ there is a unique 2-cell $\eta:1_X\Rightarrow A$ so that $q_i \eta_i=\eta 1_{\mathfrak{s}(q_i)}$.
	As before, unitality is shown using its universal property and the unitality of the $\eta_i$'s, hence  $\eta$ is the required unit. 
	
	So $(A,\mu,\eta)$ is a monad, and furthermore the legs $q_i$'s of the colimiting cone in $\End(\dc{D})$ are morphisms of monads, by the above factorizations. Hence $(A,\mu,\eta)$ is a cocone in $\Mnd(\dc{D})$ of the initial diagram $D$. To show that $(A,\mu,\eta)$ is colimiting therein, take another cocone $((B:Y\bular Y,\mu_B,\eta_B),\phi_i)$, which constitutes a cocone in $\End(\dc{D})$ as well. Therefore there is some 2-cell $\phi:A\Rightarrow B$ in $\End(\dc{D})$. Since $-\odot -$ preserves colimits, $(A\odot A,q_i\odot q_i)$ is a colimiting cocone of $(-\odot -)D$ thus its legs $q_i\odot q_i$ are jointly epic. Therefore, from
	\[
	\mu_B(\phi\odot \phi)(q_i\odot q_i)= \mu_B(\phi q_i\odot \phi q_i) = \mu_B(\phi_i\odot \phi_i)=\phi_i\mu_i=\phi q_i\mu_i=\phi\mu (q_i\odot q_i)
	\]
	we get that $\mu_B(\phi\odot \phi)=\phi\mu$ which means that $\phi$ respects multiplication. With similar arguments, we get that $\phi$ also respects the units, thus $\phi$ is a monad map.
	
	The fact that the forgetful functor creates these colimits is evident from the above construction.
\end{proof}

\begin{rmk}
	To see that the assumptions in \cref{prop:Mnd(D)hascoequalizers} and \cref{MndColimits} are indeed of a different nature, consider $\dc{D}=\dc{S}\nc{pan}(\ca{C})$ for a category with pullbacks $\ca{C}$. In this case the condition that $\dc{D}$ have stable local $\ca{I}$-colimits translates to $\ca{C}$ having $\ca{I}$-colimits which are stable under pullback. On the other hand, $-\odot-\colon\nc{Grph}(\ca{C})\to\nc{Grph}(\ca{C})$ preserving $\ca{I}$-colimits would require the latter to \emph{commute} with pullbacks in $\ca{C}$, a property which in general is distinct from pullback-stability.
\end{rmk}

We close this section by establishing local presentability for the category of monads $\Mnd(\dc{D})$, for a locally presentable $\dc{D}$ (\cref{defi:lp}). Towards this end, we will need to impose extra conditions on $\dc{D}$ concerning the preservation of filtered colimits by the unit and horizontal composition functors. 
The following proof essentially generalizes the approach for the category of monoids on a locally presentable and monoidal category, see \cite[2.6]{MonComonBimon}.

\begin{thm}\label{thm:Mndlp}
  Let $\dc{D}$ be a locally presentable double category where $-\odot-\colon\End(\dc{D})\to\End(\dc{D})$ and $1\colon\dc{D}_0\to\dc{D}_1$ preserve filtered colimits. Then the category of monads $\Mnd(\dc{D})$ is locally presentable.
\end{thm}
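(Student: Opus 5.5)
The plan is to realise $\Mnd(\dc{D})$ as the category of algebras for a finitary-type (accessible) monad on a locally presentable category. Then I would invoke the standard fact that for an accessible monad $T$ on a locally presentable category $\ca{K}$, the Eilenberg--Moore category $\ca{K}^T$ is locally presentable (Adámek--Rosický, 2.78 and Remark 2.78). The base will be $\ca{K}=\End(\dc{D})$, which is locally presentable by \cref{thm:End(D) lp}. By \cref{prop:freemonadfunctorexistence2} the forgetful functor $U\colon\Mnd(\dc{D})\to\End(\dc{D})$ has a left adjoint $F$, and by \cref{thm:Mnd(D)monadic} (equivalently \cref{cor:monadicity}(2)) $U$ is monadic. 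So $\Mnd(\dc{D})\simeq\End(\dc{D})^{UF}$, and it remains to show that the monad $T=UF$ is accessible.

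Accessibility of $T$ follows once $U$ preserves $\lambda$-filtered colimits for some regular $\lambda$, because $F$ is a left adjoint and preserves all colimits. For this I would use \cref{MndColimits} with $\ca{I}$ ranging over filtered (or $\lambda$-filtered) small categories. Its hypotheses hold here. By \cref{rmk:lpstable} a locally presentable double category is parallel cocomplete, so in particular $\dc{D}$ has parallel filtered colimits. The assumptions of the theorem say exactly that $-\odot-\colon\End(\dc{D})\to\End(\dc{D})$ and $1\colon\dc{D}_0\to\dc{D}_1$ preserve filtered colimits. Hence $\Mnd(\dc{D})$ has filtered colimits and $U$ creates them, so in particular $U$ preserves them. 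Therefore $T=UF$ preserves filtered colimits and is finitary, hence $\lambda$-accessible for every $\lambda$.

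To finish, I apply the theorem on algebras for an accessible monad on a locally presentable category, choosing $\lambda$ large enough that $\End(\dc{D})$ is locally $\lambda$-presentable. If one prefers to avoid that citation, there is an alternative route. Cocompleteness of $\End(\dc{D})^T$ follows from Linton's theorem together with reflexive coequalizers; these can be built in the accessible setting by the usual transfinite argument, or taken directly from \cref{prop:Mnd(D)hascoequalizers} when stable local colimits hold, which \cref{rmk:lpstable} only guarantees for filtered colimits. For a strong generator one takes $\{FG\}$, where $G$ ranges over a set of $\lambda$-presentable generators of $\End(\dc{D})$. Each $FG$ is $\lambda$-presentable because $U$ preserves $\lambda$-filtered colimits, and the family is strongly generating because $U$ is faithful and conservative.

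I expect the main obstacle to be this last step, namely making sure that the ``algebras over an accessible monad are locally presentable'' result is applicable without separately constructing all colimits in $\Mnd(\dc{D})$. The cited theorem handles this: the category of algebras for an accessible monad on a locally presentable category is an accessible category that is also cocomplete, by Adámek--Rosický 2.78. The delicate verification is only that $T$ is indeed accessible, and that rests entirely on \cref{MndColimits} applying to filtered colimits. There one should double check that the filtered colimits in $\End(\dc{D})$ are computed in a way compatible with $\odot$ restricted to $\End(\dc{D})$, which is precisely what the hypothesis supplies.
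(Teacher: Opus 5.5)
Your argument is correct, but it takes a different route from the paper. You realise $\Mnd(\dc{D})$ as the Eilenberg--Moore category of the free-monad monad $T=UF$ on $\End(\dc{D})$, using \cref{prop:freemonadfunctorexistence2} and \cref{thm:Mnd(D)monadic}. You then use \cref{MndColimits} to see that $U$ creates filtered colimits, so $T$ is finitary, and conclude with the Ad\'amek--Rosick\'y theorem on algebras for accessible monads. Your fallback sketch via Linton's theorem is unnecessary, since that theorem already gives cocompleteness.

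The paper never uses free monads or monadicity. It takes the endofunctor $M\mapsto (M\odot M)+1_X$ on $\End(\dc{D})$, with the coproduct taken in $\End(\dc{D})$, so that algebra structure cells need not be globular. Because this endofunctor is finitary, $\Alg(F)$ is locally presentable. The paper then shows directly that $\Mnd(\dc{D})$ is a full subcategory of $\Alg(F)$ closed under limits and filtered colimits, and applies the reflection theorem. Globularity of the structure cells is checked via the jointly monic or jointly epic vertical components. The monad axioms are checked via jointly cancellable families, and the colimit half is essentially a rerun of \cref{MndColimits}.

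Your route is shorter and more modular. It also gives the slightly sharper conclusion that $\Mnd(\dc{D})$ is finitary monadic over $\End(\dc{D})$. Its cost is that it depends on the existence of free monads. In \cref{prop:freemonadfunctorexistence2} these are obtained fibrewise from Porst's result, which needs the accessibility of the functors $-\odot M$ and $M\odot-$ built into \cref{defi:lp}. The paper's argument, as the remark following the theorem notes, does not use those hypotheses at all. It even yields free monads as a by-product: the reflector $\Alg(F)\to\Mnd(\dc{D})$ composed with the free $F$-algebra functor gives a left adjoint to $U$.
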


\begin{proof}
  We define an endofunctor on the horizontal endo-1-cell category
 \begin{equation}\label{eq:defF1}
    F\colon\End(\dc{D})\xrightarrow{\phantom{AAAAAAAAAAAAA}}\End(\dc{D})\phantom{AAA}
  \end{equation}
  \begin{displaymath}
  \begin{tikzcd}
  X\ar[r,bul,"M"] & X
  \end{tikzcd}\quad\mapsto\quad\begin{tikzcd}(X\ar[r,bul,"M\odot M"] & X)\end{tikzcd}+\begin{tikzcd}(X\ar[r,bul,"1_X"] & X)\end{tikzcd}
  \end{displaymath}  
where ${}^f\alpha^f\mapsto(\alpha\circ\alpha)+1_f$. The category
  $\Alg(F)$ of $F$-algebras for this endofunctor has objects $A$ equipped with a morphism $FA\to A$ in $\End(\dc{D})$ (and no axioms are required to be satisfied). In other words, an $F$-algebra is a horizontal endo-1-cell $A\colon X\bular X$ equipped with two structure 2-cells
  \begin{displaymath}
    \begin{tikzcd}
      X\ar[drr,phantom,"\Two a_1"]\ar[d,"x_1"']\ar[r,bul,"A"] & X\ar[r,bul,"A"] & X\ar[d,"x_1"] \\
      X\ar[rr,bul,"A"'] && X
    \end{tikzcd}\quad
    \begin{tikzcd}
      X\ar[d,"x_2"']\ar[r,bul,"1_X"]\ar[dr,phantom,"\Two a_2"] & X\ar[d,"x_2"] \\
      X\ar[r,bul,"A"'] & X.
    \end{tikzcd}
  \end{displaymath}
  Morphisms are 2-cells ${}^f\alpha^f\colon A\Rightarrow B$ that commute with the above structure cells.
  It is immediate that this endofunctor is finitary, since both $-\odot -$ and $1_{(-)}$ are by assumptions. Moreover, since $\End(\dc{D})$ is locally presentable by \cref{thm:End(D) lp}, it follows that $\Alg(F)$ is also locally presentable, by standard facts about categories of algebras for an endofunctor, see e.g.~\cite[11.3.2]{InitialAlgebras}. 

 It is then the case that the category $\Mnd(\dc{D})$ of monads (\cref{Monadindoublecat}) can be viewed as a full subcategory of $\Alg(F)$: objects are $F$-algebras which on the one hand have globular structure 2-cells and on the other hand satisfy associativity and unitality axioms, and monad maps coincide with $F$-algebra maps. Hence, by showing that $\Mnd(\dc{D})$ is closed under limits and filtered colimits in $\Alg(F)$, the so-called ``reflection theorem'' \cite[2.48]{LocallyPresentable} will imply that $\Mnd(\dc{D})$ is itself a locally presentable category.
 

  We begin with limits. Consider a diagram $D\colon\ca{I}\to\Mnd(\dc{D})$ in $\Mnd(\dc{D})$
  and 
  take its limit 
  in $\Alg(F)$ 
  \begin{center}
    \begin{tikzcd}
      X\ar[d,"l_i"']\ar[r,bul,"L"]\ar[dr,phantom,"\Two \lambda_i"] & X\ar[d,"l_i"] \\
      X_i\ar[r,bul,"Di"'] & X_i
    \end{tikzcd}
  \end{center}
Since the locally presentable double category $\dc{D}$ has all parallel limits, and $\Alg(F)\to\End(\dc{D})\to\dc{D}_1$ creates all limits by standard endofunctor algebra facts and \cref{lem:(co)limits in End(D)},
the vertical components $(l_i\colon X\to X_i)_{\in\ca{I}}$ form a limit in $\dc{D}_0$ 
and in particular the $l_i$'s form a jointly monomorphic family. To show that $(L,\mu,\eta)$ is actually in $\Mnd(\dc{D})$, we must first show that $L$'s structure maps
  \begin{displaymath}
    \begin{tikzcd}
      X\ar[drr,phantom,"\Two \mu"]\ar[d,"x_1"']\ar[r,bul,"L"] & X\ar[r,bul,"L"] & X\ar[d,"x_1"] \\
      X\ar[rr,bul,"L"'] && X
    \end{tikzcd}\quad
    \begin{tikzcd}
      X\ar[d,"x_2"']\ar[r,bul,"1_X"]\ar[dr,phantom,"\Two \eta"] & X\ar[d,"x_2"] \\
      X\ar[r,bul,"L"'] & X
    \end{tikzcd}
  \end{displaymath}
  are globular. Since $\lambda_i$ are $F$-algebra maps, we have equalities
  \begin{displaymath}
    \begin{tikzcd}
      X\ar[drr,phantom,"\Two \mu"]\ar[d,"x_1"']\ar[r,bul,"L"] & X\ar[r,bul,"L"] & X\ar[d,"x_1"] \\
      X\ar[rr,bul,"L"']\ar[drr,phantom,"\Two\lambda_i"]\ar[d,"l_i"'] && X\ar[d,"l_i"] \\
      X_i\ar[rr,bul,"Di"'] && X_i
    \end{tikzcd}
    \quad
    =
    \quad
    \begin{tikzcd}
      X\ar[r,bul,"L"]\ar[d,"l_i"]\ar[dr,phantom,"\Two\lambda_i"] & X\ar[r,bul,"L"]\ar[d,"l_i"]\ar[dr,phantom,"\Two\lambda_i"] & X\ar[d,"l_i"] \\
      X_i\ar[d,equal]\ar[drr,phantom,"\Two\mu_i"]\ar[r,bul,"Di"'] & X_i\ar[r,bul,"Di"'] & X_i\ar[d,equal] \\
      X_i\ar[rr,bul,"Di"'] && X_i
    \end{tikzcd},\qquad
    \begin{tikzcd}
      X\ar[d,"x_2"']\ar[r,bul,"1_X"]\ar[dr,phantom,"\Two \eta"] & X\ar[d,"x_2"] \\
      X\ar[d,"l_i"']\ar[r,bul,"L"]\ar[dr,phantom,"\Two \lambda_i"] & X\ar[d,"l_i"] \\
      X_i\ar[r,bul,"Di"'] & X_i
    \end{tikzcd}
    \quad
    =
    \quad
    \begin{tikzcd}
      X\ar[d,"l_i"']\ar[r,bul,"1_X"]\ar[dr,phantom,"\Two 1_{l_i}"] & X\ar[d,"l_i"] \\
      X_i\ar[r,bul,"1_{X_i}"]\ar[dr,phantom,"\Two\eta_i"]\ar[d,equal] & X_i\ar[d,equal] \\
      X_i\ar[r,bul,"Di"'] & X_i
    \end{tikzcd}
  \end{displaymath}
that vertically imply that $l_{i}x_1=l_i$ and $l_{i}x_2=l_i$ for all $i\in\ca{I}$, thus $x_1=\id_X$ and $x_2=\id_X$.
To then show the associativity and unit axioms for $(L,\mu,\eta)$, it suffices to compose with the also jointly cancellable $\lambda_i$'s and use the corresponding axioms for each monad $(D_i,\mu_i,\eta_i)$ and that $\lambda_i$'s are algebra maps:
  \begin{align*}
    \lambda_i\mu(\mu\odot 1) &=\mu_i(\lambda_i\odot\lambda_i)(\mu\odot 1)=
    \mu_i(\mu_i\odot 1)(\lambda_i\odot\lambda_i\odot\lambda_i)=\mu_i(1\odot\mu_i)(\lambda_i\odot\lambda_i\odot\lambda_i) \\
    &
    =\mu_i(\lambda_i\odot\lambda_i)(1\odot\mu)=\lambda_i\mu(1\odot\mu)\\
    \lambda_i\mu(1\odot\eta) &=\mu_i(\lambda_i\odot\lambda_i)(1\odot\eta)=\lambda_i=\mu_i(\lambda_i\odot\lambda_i)(\eta\odot 1)=\lambda_i\mu(\eta\odot 1)
  \end{align*}
  where we suppressed horizontal composition associativity and unitality constraints.

  Now we turn to filtered colimits. Once again, consider a diagram $D\colon\ca{I}\to\Mnd(\dc{D})$
  with $\ca{I}$ filtered, and take its colimit in $\Alg(F)$
  \begin{equation}\label{gammai}
    \begin{tikzcd}
      X_i\ar[r,bul,"Di"]\ar[d,"q_i"']\ar[dr,phantom,"\Two\gamma_i"] & X_i\ar[d,"q_i"] \\
      Q\ar[r,bul,"C"'] & Q.
    \end{tikzcd}
  \end{equation}
Since $F$ preserves filtered colimits, again by standard endofunctor algebras facts we know that such colimits in $\Alg(F)$ are created by functor $\Alg(F)\to\End(\dc{D})$ (\cite{MonComonBimon}), 
and using \cref{lem:(co)limits in End(D)} and parallel colimits of $\dc{D}$, the vertical components $(q_i:X_i\to Q)$ again form a colimiting cocone in $\dc{D}_0$, and in particular form a jointly epimorphic family.

  To show that $(C,\mu,\eta)$ is in $\Mnd(\dc{D})$, one can use similar arguments as the ones above: first of all, the structure 2-cells of the colimit $(C,\mu,\eta)$ are globular like before.
 For the associativity and unit axioms for $(C,\mu,\eta)$, one can use a somewhat dual argument to the limit case, where in addition now we note that by assumptions, 
 composing the colimiting cocone \cref{gammai} horizontally with itself yields another colimit. In particular, the families of 2-cells 
 $\gamma_i\odot\gamma_i\odot\gamma_i$ are jointly epimorphic. So for example, associativity 
 can be deduced from the following calculation
  \begin{align*}
  \mu(\mu\odot 1)(\gamma_i\odot\gamma_i\odot\gamma_i) &=\mu(\gamma_i\odot\gamma_i)(\mu_i\odot 1)=\gamma_i\mu_i(\mu_i\odot 1)= \gamma_i\mu_i(1\odot\mu_i)=\mu(\gamma_i\odot\gamma_i)(1\odot\mu_i) \\
  &=\mu(1\odot\mu)(\gamma_i\odot\gamma_i\odot\gamma_i).
  \end{align*}
\end{proof}

\begin{rmk}
	We should note here that the assumption of preservation of filtered colimits by all functors $-\odot M$ and $M\odot-$, which is included in \cref{defi:lp} of a locally presentable double category, is not in fact needed in the above proof. Rather, for the purposes of this result, these conditions are substituted with the corresponding ones on $1\colon\dc{D}_0\to\dc{D}_1$ and $-\odot-\colon\End(\dc{D})\to\End(\dc{D})$, giving an example of what was discussed in \cref{rmk: on lp defi}. 
\end{rmk}

\begin{ex}
  The above assumptions hold in $\dc{D}=\VMMat$, for $\ca{V}$ a locally presentable monoidal category such that $\otimes$ preserves (coproducts and) filtered colimits in each variable. First of all, $\VMMat$ is then locally presentable by \cref{ex:lpdouble}. Moreover, the preservation of filtered colimits by $-\odot-\colon\VGrph\to\VGrph$ is essentially \cite[Lem.~3.2]{KellyLack}, while on can verify that $1\colon\Set\to\VMMat_1$ preserves all connected colimits. 
  
  We thus deduce that, under the stated assumptions on $\ca{V}$, the category of enriched categories $\VCat$ is locally presentable. 
In particular, this recovers \cite[Thm.~4.5]{KellyLack}.
\end{ex}

\begin{ex}
	Similarly, one can see that the result also applies to $\dc{D}=\dc{S}\nc{pan}(\ca{C})$, for $\ca{C}$ a locally presentable category. By \cref{ex:lpdouble}, $\dc{S}\nc{pan}(\ca{C})$ is locally presentable. The preservation of filtered colimits by the functor $1$ is immediate, while the corresponding property for $-\odot-\colon\nc{Grph}(\ca{C})\to\nc{Grph}(\ca{C})$ boils down to the commutativity of pullbacks and filtered colimits in $\ca{C}$. In this case, we deduce that $\ca{C}$ being locally presentable implies the same for $\Cat(\ca{C})$, the category of internal categories in $\ca{C}$, see e.g. \cite[Thm.~3.24]{Cosmoi}.
\end{ex}

\section{Comonads}

We now turn to the category $\Cmd(\dc{D})$ of comonads and comonad maps in double categories, recalled in \cref{Monadindoublecat}. It is known that even in the one-object case, namely comonoids in monoidal categories, the existence of cofree coalgebras is more demanding than its dual problem: for example, there does not exist a formula like the ``geometric series'' one for free monoids, used in the previous section. Inspired by Porst's approach \cite{MonComonBimon} concerning the existence of a right adjoint to $\Comon(\ca{V})\to\ca{V}$ for a monoidal category $\ca{V}$, we establish the existence of cofree comonads in double categories, dually to \cref{prop:freemonadfunctorexistence2}. 

\begin{prop}\label{prop:freecomonadexistence}
  Suppose that $\dc{D}$ is a locally presentable double category. Then the forgetful functor $U\colon\Cmd(\dc{D})\to\End(\dc{D})$ has a right adjoint.
\end{prop}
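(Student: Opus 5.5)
The plan is to dualize the fibrational reduction used for monads. The difficulty is that \cref{thm:totaladjointthm} gives left adjoints to fibred functors, whereas here we need a right adjoint. By \cref{prop:MonComonfibred}, $\Cmd(\dc{D})\to\dc{D}_0$ is an opfibration. $\End(\dc{D})\to\dc{D}_0$ is a bifibration, and its opcartesian liftings are given by $\wh{f}\odot-\odot\wc{f}$, which is the same reindexing used for comonads. So $U$ commutes with the projections to $\dc{D}_0$ and preserves opcartesian liftings, i.e. it is an opfibred functor between opfibrations over $\dc{D}_0$. The dual of \cref{thm:totaladjointthm}, recorded right after its statement, then says that $U$ has a right adjoint as soon as each fibrewise restriction $U_X\colon\Cmd(\dc{D})_X\to\End(\dc{D})_X$ has one. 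This fibrewise functor is exactly the forgetful functor $\Comon(\dcD{X}{X},\odot,1_X)\to\dcD{X}{X}$.

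It therefore remains to show that cofree comonoids exist in each monoidal fibre $\dcD{X}{X}$. By \cref{thm:End(D) lp}, $\dcD{X}{X}$ is locally presentable. By \cref{rmk:lpstable}, $\dcD{X}{X}$ has stable local filtered colimits, so $\odot$ preserves filtered colimits in each variable. This is exactly the setting of Porst \cite{MonComonBimon}, where the comonoid version is handled. We would present $\Comon(\dcD{X}{X})$ as a full subcategory of the category of coalgebras for the accessible endofunctor $M\mapsto (M\odot M)\times 1_X$ on $\dcD{X}{X}$. Coalgebras of an accessible endofunctor on a locally presentable category form a locally presentable category, and the forgetful functor is comonadic-like and cocontinuous. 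The comonoids are carved out by coassociativity and counit equations. These form an equationally defined full subcategory closed under colimits, since colimits of comonoids are created in $\dcD{X}{X}$ by the argument dual to \cref{(co)limits in (co)monads}, restricted to the fibre. They are also closed under $\lambda$-filtered colimits, and hence $\Comon(\dcD{X}{X})$ is locally presentable, being presentable as an inserter/equifier-type construction among accessible functors, as in \cite[2.6]{MonComonBimon}. The forgetful functor to $\dcD{X}{X}$ is then a cocontinuous functor between locally presentable categories. By the special adjoint functor theorem for locally presentable categories, it has a right adjoint.

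As in the footnote to \cref{prop:freemonadfunctorexistence2}, Porst assumes symmetry. We would point out that his proof only uses that the tensor preserves filtered colimits in each variable separately, which we have here. The main obstacle is this fibrewise step: verifying that $\Comon(\dcD{X}{X})$ is locally presentable, i.e. that the coassociativity and counit equations cut out an accessible, colimit-closed subcategory of coalgebras. We would handle this by expressing it as an equifier of two natural transformations between accessible functors on the locally presentable category of coalgebras and invoking the closure of locally presentable categories under such limits \cite{LocallyPresentable}. Once the fibrewise right adjoints exist, the dual of \cref{thm:totaladjointthm} assembles them into the global right adjoint of $U$.
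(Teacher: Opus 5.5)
Your proposal is correct and follows the paper's proof. You show that $U$ is an opfibred functor between opfibrations over $\dc{D}_0$, use the dual of \cref{thm:totaladjointthm} to reduce to cofree comonoids in each fibre $\dcD{X}{X}$ (locally presentable, with $\odot$ preserving filtered colimits in each variable), and then invoke Porst; your sketch of Porst's fibrewise argument (coalgebras for $M\mapsto (M\odot M)\times 1_X$, an equifier for accessibility plus cocompleteness, then the adjoint functor theorem) is detail the paper simply cites, and the right reference for comonoids is \cite[\S~2.7]{MonComonBimon} rather than \S 2.6.
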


\begin{proof}
  The forgetful $U\colon\Cmd(\dc{D})\to\End(\dc{D})$ constitutes an opfibred 1-cell
  \begin{displaymath}
    \begin{tikzcd}
      \Cmd(\dc{D})\ar[rr,"U"]\ar[dr] && \End(\dc{D})\ar[dl] \\
      & \dc{D}_{0}
    \end{tikzcd}
  \end{displaymath}
  between the opfibrations $\Cmd(\dc{D})\to\dc{D}_0$ and $\End(\dc{D})\to\dc{D}_0$, see \cref{prop:MonComonfibred}, since cocartesian liftings are essentially the same.
Thus by the dual of \cref{thm:totaladjointthm}, $U$ has a right adjoint when for every object $X\in\dc{D}_0$, the restrictions
$U_{X}\colon\Cmd(\dc{D})_{X}\to\End(\dc{D})_{X}$ to the fibers have a right adjoint.

Since $\End(\dc{D})_{X}=\dcD{X}{X}$ and $\Cmd(\dc{D})_{X}=\Comon(\dcD{X}{X})$, this reduces to the cofree comonoid existence on the monoidal category $(\dcD{X}{X},\odot,1_X)$. Since $\dc{D}$ is locally presentable,
$\dcD{X}{X}$ is locally presentable by \cref{thm:End(D) lp}, 
  and its tensor product $\odot$ preserves filtered colimits in each
  variable by \cref{rmk:lpstable}. Therefore by \cite[\S~2.7]{MonComonBimon}, $U_X$ has a right adjoint. 
\end{proof}

As for the dual case of monads in double categories (\cref{thm:Mnd(D)monadic}), it is the case that comonads are comonadic over endomaps as soon as cofree comonads exist, hence we obtain the following result.

\begin{thm}
Suppose that $\dc{D}$ is a double category such that $U\colon\Cmd(\dc{D})\to\End(\dc{D})$ has a right adjoint. Then $\Cmd(\dc{D})$ is comonadic over $\End(\dc{D})$.
\end{thm}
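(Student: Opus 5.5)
We dualize the proof of \cref{thm:Mnd(D)monadic} and apply the dual of Beck's monadicity theorem. By hypothesis $U\colon\Cmd(\dc{D})\to\End(\dc{D})$ has a right adjoint, so it remains to check two things. First, $U$ reflects isomorphisms. Second, $\Cmd(\dc{D})$ has, and $U$ preserves, equalizers of $U$-cosplit pairs.

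Reflection of isomorphisms is routine. If ${}^f\alpha^f\colon C\Rightarrow D$ is a comonad map invertible in $\End(\dc{D})$, its inverse ${}^{f^{-1}}\alpha^{-1}$ commutes with comultiplications and counits. For comultiplications this follows by pre- and post-composing the relation $\delta_D\alpha=(\alpha\odot\alpha)\delta_C$ with $\alpha^{-1}$ and $\alpha^{-1}\odot\alpha^{-1}$, using functoriality of $\odot$. For counits one argues similarly with $1_{f^{-1}}=(1_f)^{-1}$.

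For the equalizers, let $\alpha_f,\beta_g\colon C\rightrightarrows D$ be comonad maps between $C\colon X\bular X$ and $D\colon Y\bular Y$. Suppose $U\alpha,U\beta$ have a split equalizer $\gamma_e\colon E_Z\Rightarrow C_X$ in $\End(\dc{D})$. Split equalizers are absolute, so applying the functor $-\odot-$ (defined on $\End(\dc{D})$, or on $\dc{D}_1\times_{\dc{D}_0}\dc{D}_1$ restricted to the diagonal) yields a split equalizer $\gamma\odot\gamma$ of $\alpha\odot\alpha,\beta\odot\beta$. Likewise, applying $1_{(-)}\circ\mathfrak{s}$ yields a split equalizer $1_e$ of $1_f,1_g$; here the splittings are preserved because these are functors.

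The composite $\delta_C\gamma\colon E\Rightarrow C\odot C$ is equalized by $\alpha\odot\alpha$ and $\beta\odot\beta$, since these are comonad maps and $\alpha\gamma=\beta\gamma$. This gives a unique $\delta_E\colon E\Rightarrow E\odot E$ with $(\gamma\odot\gamma)\delta_E=\delta_C\gamma$. Similarly, $\epsilon_C\gamma$ factors uniquely through $1_e$, giving $\epsilon_E\colon E\Rightarrow 1_Z$.

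Globularity of $\delta_E,\epsilon_E$ is the one point that differs from the monad case. Taking vertical components, $e\circ(\delta_E)_0=e$. Now $e$ is a split equalizer in $\dc{D}_0$, being the image of an absolute limit under $\mathfrak{s}$, and is therefore monic. Hence $(\delta_E)_0=\id_Z$, and the same argument applies to $\epsilon_E$.

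Coassociativity and counitality then follow by postcomposing with the monomorphisms $\gamma\odot\gamma\odot\gamma$ and $1_e\odot\gamma$ (respectively $\gamma\odot 1_e$). These are split monos, being images of split monos under functors. The required equalities follow from the corresponding axioms for $C$ together with the defining equations of $\delta_E,\epsilon_E$.

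Finally, $\gamma$ is by construction a comonad map. For universality, take a comonad map $\theta\colon B\Rightarrow C$ with $\alpha\theta=\beta\theta$. It factors in $\End(\dc{D})$ as $\gamma\bar\theta$, and $\bar\theta$ respects comultiplication and counit after postcomposition with the monos $\gamma\odot\gamma$ and $1_e$. Hence $\gamma$ is an equalizer in $\Cmd(\dc{D})$ preserved by $U$, and dual Beck applies.

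The main obstacle is ensuring that applying $-\odot-$ and $1_{(-)}$ to $U$-split pairs gives split equalizers with the correct vertical boundaries. I expect this to be straightforward: both are genuine functors on $\End(\dc{D})$, and absoluteness does the rest. Unlike \cref{thm:Mnd(D)monadic}, no fibrancy is needed.
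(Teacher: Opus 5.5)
Your proposal is correct and follows the route the paper intends. The paper gives no separate argument; it only asserts that the result is dual to \cref{thm:Mnd(D)monadic}, and your dual-Beck argument carries out that dualization in detail: reflection of isomorphisms, lifting the comonad structure along the absolute images $\gamma\odot\gamma$ and $1_e$, and globularity from the fact that the split equalizer $e$ in $\dc{D}_0$ is monic. Your remark that fibrancy plays no role is also accurate, and the monad proof does not genuinely use it either.
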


In particular, combining the last two results yields the following.

\begin{cor}\label{cor:Cmd(D)comonadic}
	Let $\dc{D}$ be a locally presentable double category. Then $\Cmd(\dc{D})$ is comonadic over $\End(\dc{D})$.
\end{cor}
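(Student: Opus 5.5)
The plan is to combine \cref{prop:freecomonadexistence} with the unnumbered theorem immediately preceding this corollary. The only real work is to justify that theorem, namely the dual of \cref{thm:Mnd(D)monadic}, by the dual Beck argument.

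First, by \cref{prop:freecomonadexistence}, local presentability of $\dc{D}$ gives a right adjoint $R\colon\End(\dc{D})\to\Cmd(\dc{D})$ to the forgetful functor $U$. By Beck's comonadicity theorem it then remains to check two things. First, $U$ reflects isomorphisms. This holds because if a comonad morphism ${}^f\alpha^f$ is invertible in $\End(\dc{D})$, its inverse automatically respects the comultiplication and counit; one precomposes and postcomposes the comonad-morphism equations with $\alpha^{-1}$ and $\alpha^{-1}\odot\alpha^{-1}$. Second, $\Cmd(\dc{D})$ has, and $U$ preserves, equalizers of $U$-split pairs.

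For the second point I would dualize the argument of \cref{thm:Mnd(D)monadic}. Take a pair of comonad maps $\alpha,\beta\colon C\Rightarrow D$ whose images have a split equalizer $\kappa\colon E\Rightarrow C$ in $\End(\dc{D})$. Since split equalizers are absolute, three things follow.
\begin{itemize}
\item Applying $-\odot-$ gives a split equalizer $\kappa\odot\kappa$ of $\alpha\odot\alpha,\beta\odot\beta$.
\item Applying $1_{(-)}\circ\mathfrak{s}$ gives a split equalizer $1_k$ of $1_f,1_g$.
\item Applying $\mathfrak{s}$ shows that the vertical component $k$ is an equalizer in $\dc{D}_0$, hence monic.
\end{itemize}
The composite $\delta_C\kappa$ equalizes $\alpha\odot\alpha,\beta\odot\beta$, since $\alpha,\beta$ respect comultiplication. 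It therefore factors uniquely as $(\kappa\odot\kappa)\delta_E$. The vertical boundary of $\delta_E$ is an endomorphism $x$ of the apex with $kx=k$, so monicity of $k$ forces $\delta_E$ to be globular. Similarly $\epsilon_C\kappa$ factors through $1_k$ to give a globular counit $\epsilon_E$. Coassociativity and counitality follow by postcomposing with the monomorphisms $\kappa\odot\kappa\odot\kappa$ and $1_k\odot\kappa$, which are split monos, and using the axioms for $C$. Finally, a cone of comonad maps into the pair factors through $\kappa$ in $\End(\dc{D})$, and the factorization is a comonad map by cancelling the monic $\kappa\odot\kappa$ and $1_k$. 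So $\kappa$ is the equalizer in $\Cmd(\dc{D})$ and is preserved by $U$.

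I expect no serious obstacle. The one point needing care is the globularity of $\delta_E$ and $\epsilon_E$. That is where one must use that $\mathfrak{s}$ sends the split equalizer to a (split) equalizer in $\dc{D}_0$, so that the vertical component $k$ is monic. This is dual to the use of ``$e$ epic'' in \cref{thm:Mnd(D)monadic}. Combining the previous theorem with \cref{prop:freecomonadexistence} then yields the corollary.
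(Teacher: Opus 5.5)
Your proposal is correct and follows the paper's route. The paper obtains the corollary by combining \cref{prop:freecomonadexistence} with the preceding comonadicity theorem, whose proof it leaves as the dual of the Beck argument in \cref{thm:Mnd(D)monadic}; your dualization fills this in correctly, using the monicity of $k$ for globularity of $\delta_E,\epsilon_E$ and the split monos $\kappa\odot\kappa\odot\kappa$ and $1_k\odot\kappa$ for the comonad axioms.
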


Finally, we establish conditions on $\dc{D}$ under which the category of comonads $\Cmd(\dc{D})$ is locally presentable, namely the dual of \cref{thm:Mndlp} but with a different proof strategy. This again generalizes the fact that the category of comonoids in a locally presentable monoidal category $\ca{V}$ is itself locally presentable \cite[\S~2.7]{MonComonBimon}. The current proof adopts a similar, but eventually more involved technique, expressing the category of comonads as an equifier of certain natural transformations between accessible functors and then a limit theorem.

\begin{thm}\label{thm:Cmdlp}
Let $\dc{D}$ be a locally presentable double category where $-\odot-\colon\End(\dc{D})\to\End(\dc{D})$ and $1\colon\dc{D}_0\to\dc{D}_1$ preserve filtered colimits, and furthermore suppose that $\dc{D}$ has stable local initial objects\footnote{As will become clear from the proof, only functors of the form $\wc{g}\odot\mi\odot\wh{f}$ need to preserve initial objects. We note that this does not follow, since due to fibrancy, such functors have a \emph{left} adjoint -- analogously to \cref{eq:adj}.}. Then the category $\Cmd(\dc{D})$ of comonads is locally presentable.
\end{thm}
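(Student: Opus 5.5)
Following the hint before the statement, I would exhibit $\Cmd(\dc{D})$ as an iterated equifier (and inserter-like) construction built from accessible functors between locally presentable categories, and then invoke the limit theorem for accessible categories \cite[2.76--2.78]{LocallyPresentable}. The outcome is that $\Cmd(\dc{D})$ is accessible. Since it is cocomplete by \cref{(co)limits in (co)monads} (locally presentable double categories are parallel cocomplete), it is then locally presentable.

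\textbf{Step 1: the coalgebra category.} Dually to the proof of \cref{thm:Mndlp}, consider the endofunctor $G\colon\End(\dc{D})\to\End(\dc{D})\times\End(\dc{D})$ given by $M\mapsto(M\odot M,1_X)$. Form the inserter-type category $\Coalg(G)$ whose objects are endo-1-cells $C$ with 2-cells $\delta\colon C\Rightarrow C\odot C$ and $\epsilon\colon C\Rightarrow 1_X$, with possibly non-identity vertical components $x_1,x_2$. This category is the inserter of $\id$ and $G$ composed with diagonals, and $G$ is finitary by the assumptions on $\odot$ and $1$. Since $\End(\dc{D})$ is locally presentable by \cref{thm:End(D) lp}, $\Coalg(G)$ is accessible, with limit-theoretic presentation \cite[2.72]{LocallyPresentable}. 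Colimits are created by the forgetful functor because $G$ preserves filtered colimits, so $\Coalg(G)$ is even locally presentable.

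\textbf{Step 2: imposing globularity.} Next I cut down to globular structure cells, i.e. $x_1=x_2=\id_X$. Here I would not use a limit argument as in the monad case, since we now only have joint epis for colimits and not joint monos for limits. Instead I would express globularity as an equifier. The vertical components give two functors $\Coalg(G)\to\dc{D}_0^{\to}$, namely $C\mapsto x_i$ and $C\mapsto \id_X$, and I equify them; equivalently I use the inserter-equifier machinery on $\dc{D}_0$. Both functors are accessible, being built from $\mathfrak{s}$, which is cocontinuous, so the resulting full subcategory is accessible. A cleaner alternative would be to work fibrewise-in-total: replace $\End(\dc{D})\odot$-targets by the fibred version, using the globular part $\dcD{X}{X}$, and define $\delta$ as a map $C\Rightarrow C\odot C$ in $\End(\dc{D})$ that is required to lie over an identity. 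That condition is again an equifier of accessible functors into $\dc{D}_0$.

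\textbf{Step 3: the axioms.} Coassociativity and counitality are equalities of pairs of natural transformations between accessible functors out of the category from Step 2:
\begin{itemize}
\item for coassociativity, $a\circ(\delta\odot1)\delta$ versus $(1\odot\delta)\delta\colon U\Rightarrow (-)^{\odot 3}U$;
\item for counitality, $\ell\circ(\epsilon\odot 1)\delta$ versus $\id$.
\end{itemize}
These functors are accessible because $-\odot-$ and $1$ preserve filtered colimits. The equifier of such pairs is accessible, and it is exactly $\Cmd(\dc{D})$, since comonad morphisms are then precisely coalgebra morphisms.

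\textbf{Main obstacle.} I expect the main difficulty to be that the counit target $1_X$ interacts badly with non-globular cells. An arbitrary 2-cell $C\Rightarrow 1_X$ with vertical boundary $x_2$ factors through the cocartesian lift, which is $\wh{x_2}\odot 1\odot\wc{x_2}$-like. Expressing "vertical part is identity" as an accessible equifier therefore requires care. I would try to realize the identity condition via pullback along $\Delta\colon\dc{D}_0\to\dc{D}_0\times\dc{D}_0$-type constructions, and to show these are accessible. Stable local initial objects enter here: functors $\wc{g}\odot-\odot\wh{f}$ must preserve the initial object, so that the "empty" endo-cells in each fibre behave well. This is what keeps the relevant comparison functors accessible, and also what keeps $\Cmd(\dc{D})$ having a small dense generator, via the fibre-wise locally presentable $\Comon(\dcD{X}{X})$ from \cite[\S~2.7]{MonComonBimon} glued over the opfibration of \cref{prop:MonComonfibred}.
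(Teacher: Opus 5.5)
Your proposal is correct, and it departs from the paper exactly at the globularity step.

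\textbf{The paper's route.} The paper shows that the full subcategory $\overline{\Coalg(F)}$ of coalgebras with globular structure cells is accessible in two stages. It proves that this subcategory is closed in $\Coalg(F)$ under colimits and under subobjects, and then invokes the remark after \cite[Cor.~2.36]{LocallyPresentable}. Closure under subobjects rests on \cref{lem:mono}: a monomorphism of $F$-coalgebras has monic vertical part. That lemma is proved by building a left adjoint $\Coalg(P)\to\Coalg(F)$ out of the initial objects $I_X\in\dcD{X}{X}$. This is the only place where stable local initial objects are used.

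\textbf{Your route.} You obtain globularity as an equifier, which is the same tool the paper uses afterwards for the comonad axioms. You should phrase it via natural transformations rather than as ``equifying two functors into $\dc{D}_0^{\to}$'': agreement of two functors is a strict equalizer, which is not an accessible construction in general. Concretely, write $V\colon\Coalg(G)\to\End(\dc{D})$ for the forgetful functor.
\begin{itemize}
\item The vertical parts of the structure cells form natural transformations $x_1,x_2\colon\mathfrak{s}V\Rightarrow\mathfrak{s}V$. Naturality is just the vertical boundary $fx_i=y_if$ of the coalgebra-morphism equations.
\item Since $\mathfrak{s}V$ is cocontinuous, the equifier of $x_i$ and $\id_{\mathfrak{s}V}$, for $i=1,2$, is accessible and accessibly embedded by \cite[Lem.~2.76]{LocallyPresentable}.
\item Your Step 3, together with cocompleteness of $\Cmd(\dc{D})$ from \cref{(co)limits in (co)monads}, then finishes the proof as in the paper.
\end{itemize}
Consequently your final paragraph is unnecessary. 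There is no remaining difficulty with the counit target. Nothing in your Steps 1--3 uses stable local initial objects. The suggested dense generator, obtained by gluing the fibres $\Comon(\dcD{X}{X})$ over the opfibration, is neither justified as stated nor needed.

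\textbf{Comparison.} Your route is shorter and uniform, since every condition is imposed by an equifier. It also shows that the initial-object hypothesis is not needed for local presentability of $\Cmd(\dc{D})$. The paper's route yields the extra structural information of \cref{lem:mono}, for instance that monic $\ca{V}$-cofunctors are injective on objects, and it shows that $\overline{\Coalg(F)}$ is closed under subobjects.

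A minor point in Step 1: colimits in $\Coalg(G)$ are created by the forgetful functor for any $G$. What finitarity of $G$ gives you is accessibility, not creation of colimits.
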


\begin{proof}
  There is an endofunctor on the category of endomaps (which has products by \cref{lem:(co)limits in End(D)})
  \begin{equation}\label{eq:defF}
    F\colon\End(\dc{D})\xrightarrow{\phantom{AAAAAAAAAAAAAAAAAAAAA}}\End(\dc{D})\phantom{AAAAAAAA}
  \end{equation}
  \begin{displaymath}
    \bultwocell{X}{M}{X}{f}{Y}{N}{Y}{f}{\alpha}\quad\mapsto\quad
    \begin{tikzcd}
      (X\ar[r,bul,"M"]\ar[d,"f"']\ar[dr,phantom,"\Two\alpha"] &
        X\ar[r,bul,"M"]\ar[d,"f"']\ar[dr,bend right=8,phantom,"\Two\alpha"'] &
      X)\times(X\ar[r,bul,"1_X"]\ar[d,shift right=6,"f"]\ar[d,shift
      left=6,"f"']\ar[dr,bend left=8,phantom,"\Two1_f"] & X)\ar[d,"f"] \\
      (Y\ar[r,bul,"N"'] & Y\ar[r,bul,"N"'] & Y)\times
      (Y\ar[r,bul,"1_Y"'] & Y)
    \end{tikzcd}
  \end{displaymath}
 An object in the category of coalgebras $\Coalg(F)$ is $C\colon X\bular X$ together
  with a map $C\to FC$ in $\End(\dc{D})$ which breaks down to a pair of structure 2-cells
  \begin{equation}\label{eq:structureC}
    \begin{tikzcd}
      X\ar[rr,bul,"C"]\ar[drr,phantom,"\Two c_1"]\ar[d,"x_1"'] &&  X\ar[d,"x_1"] \\
      X\ar[r,bul,"C"'] & X\ar[r,bul,"C"'] & X
    \end{tikzcd}\;\;\mathrm{and}\;\;
    \bultwocell{X}{C}{X}{x_2}{X}{1_X}{X}{x_2}{c_2}
  \end{equation}
  and morphisms in  $\Coalg(F)$ respect these structure maps.
  This $F$-coalgebras category is locally presentable again by standard facts about endofunctor coalgebras, since $\End(\dc{D})$ is locally presentable by \cref{thm:End(D) lp} and $F$ is finitary: both $-\odot -$ and $1$ preserve filtered colimits by our assumptions and therefore so does $F$ as finite limits commute with filtered colimits in any locally presentable category.

Similarly to the proof of \cref{thm:Mndlp}, this category contains the category of comonads $\Cmd(\dc{D})$: on the one hand the structure 2-cells $c_1,c_2$ are not globular, and on the other hand there are no coassociativity nor counitality axioms present. 
To this end, we consider the full subcategory $\overline{\Coalg(F)}$ of $\Coalg(F)$ spanned by objects with globular structure maps, namely of the form
  \begin{equation}\label{eq:globstr}
    \begin{tikzcd}
      X\ar[rr,bul,"C'"]\ar[d,equal]\ar[drr,phantom,"\Two\delta"] && X\ar[d,equal] \\
      X\ar[r,bul,"C'"'] & X\ar[r,bul,"C'"'] & X
    \end{tikzcd}\qquad
    \globtwocell{X}{C'}{X}{X.}{1_X}{X}{\epsilon}
  \end{equation}
  This intermediate category is important, because the category of comonads in $\dc{D}$ shall be expressed as an equifier category of natural transformations (expressing the comonad axioms) between functors with domain $\overline{\Coalg(F)}$ which will in the end imply local presentability.
Thus, before describing the relevant transformations, we need to verify that $\overline{\Coalg(F)}$ is also locally presentable.

First of all, it can be verified that $\overline{\Coalg(F)}$ is accessibly embedded 
in $\Coalg(F)$ -- and is in fact closed under all colimits, thus cocomplete. We will now show that it is also closed under all subobjects and thus, by a remark under \cite[Cor.~2.36]{LocallyPresentable}, accessible and therefore locally presentable. This requires the following lemma,
whose proof we postpone so as to not interrupt the main argument, by also noting that here is where theorem's assumption regarding stable local initial objects is only used.
 
  \begin{lem}\label{lem:mono}
    Under the assumptions of \cref{thm:Cmdlp}, any monomorphism $\alpha_f\colon C_X\to D_Y$ in $\Coalg(F)$ has the property that its source/target vertical map $f\colon X\to Y$ is a monomorphism in $\dc{D}_0$.
  \end{lem}

In order to show the required closedness under subobjects, we now consider an object $C'\in\overline{\Coalg(F)}$ and a monomorphism $\alpha\colon A\to C'$ in $\Coalg(F)$, aiming to show that in fact $A\in\overline{\Coalg(F)}$. Since $\alpha$ is an $F$-coalgebra map, we have
that
  \begin{displaymath}
    \begin{tikzcd}
      Z\ar[rr,bul,"A"]\ar[d,"z_1"']\ar[drr,phantom,"\Two a_1"] && Z\ar[d,"z_1"] \\
      Z\ar[r,bul,"A"]\ar[d,"f"']\ar[dr,phantom,"\Two\alpha"] &
      Z\ar[r,bul,"A"]\ar[d,"f"']\ar[dr,phantom,"\Two\alpha"] & Z\ar[d,"f"] \\
      X\ar[r,bul,"C'"'] & X\ar[r,bul,"C'"'] & X
    \end{tikzcd}=
    \begin{tikzcd}
      Z\ar[rr,bul,"A"]\ar[d,"f"']\ar[drr,phantom,"\Two\alpha"] && Z\ar[d,"f"]\\
      X\ar[rr,bul,"C'"]\ar[d,equal]\ar[drr,phantom,"\Two \delta"] && X\ar[d,equal] \\
      X\ar[r,bul,"C'"'] & X\ar[r,bul,"C'"'] & X
    \end{tikzcd}\;\;\mathrm{and}\;\;
    \begin{tikzcd}
      Z\ar[r,bul,"A"]\ar[d,"f"']\ar[dr,phantom,"\Two\alpha"] & Z\ar[d,"f"] \\
      X\ar[r,bul,"C'"]\ar[d,equal]\ar[dr,phantom,"\Two \epsilon"] & X\ar[d,equal] \\
      X\ar[r,bul,"1_X"'] & X
    \end{tikzcd}=
    \begin{tikzcd}
      Z\ar[r,bul,"A"]\ar[d,"z_2"']\ar[dr,phantom,"\Two a_2"] & Z\ar[d,"z_2"] \\
      Z\ar[r,bul,"1_Z"]\ar[d,"f"']\ar[dr,phantom,"\Two 1_f"] & Z\ar[d,"f"] \\
      X\ar[r,bul,"1_X"'] & X
    \end{tikzcd}
  \end{displaymath}
and since $\alpha$ is a monomorphism in $\Coalg(F)$, the above lemma implies that $f$ is a monomorphism in $\dc{D}_0$. Hence from the left-hand side equation for the boundaries we get that $fz_1=f$ and so $z_1=\id_Z$, and from the right-hand side equation for the boundaries we get that $fz_2=f$ and so $z_2=\id_Z$. As a result, the $F$-coalgebra $A$ has globular structure maps, hence $\overline{\Coalg(F)}$ is closed under subobjects in $\Coalg(F)$.

  We can now express $\Cmd(\dc{D})$ as an equifier of natural transformations between finitary functors emanating from the locally presentable $\overline{\Coalg(F)}$ as follows: if $U$ is the forgetful functor, define
  \[
    \begin{tikzcd}[column sep=5pt,row sep=10pt]
      \overline{\Coalg(F)} \arrow[rr, "U", bend left=49] \arrow[rd, "U"', bend right] & {\Two \phi_1,\psi_1}                      & \End(\dc{D}) & \overline{\Coalg(F)} \arrow[rr, "U", bend left=49] \arrow[rd, "U"', bend right] & {\Two \phi_2, \psi_2}                            & \End(\dc{D}) & \overline{\Coalg(F)} \arrow[rr, "U", bend left=49] \arrow[rd, "U"', bend right] & {\Two \phi_3, \psi_3}                            & \End(\dc{D}) \\
      & \End(\dc{D}) \arrow[ru, "\odot^3"', bend right] &              &                                                                                 & \End(\dc{D}) \arrow[ru, "-\odot 1_{(-)}"', bend right] &              &                                                                                 & \End(\dc{D}) \arrow[ru, "1_{(-)}\odot -"', bend right] &
    \end{tikzcd}
  \]
  where 
  \[
    (\phi_1)_C=(\delta*\id_C)\circ\delta:C\to (C\odot C)\odot C\ \ \text{and}\ \ 
    (\psi_1)_C=(\id_C*\delta)\circ\delta:C\to C\odot (C\odot C)
  \]
  will be used to force coassociativity,  
  \[
    (\phi_2)_C=(\epsilon*\id_C)\circ\delta:C\to 1_{\mathfrak{s}(C)}\odot C\ \ \text{and}\ \ 
    (\psi_2)_C=\ell_C:C\to 1_{\mathfrak{s}(C)}\odot C
  \]
  for left counitality, and similarly
  \[
  (\phi_3)_C=(\id_C*\epsilon)\circ\delta:C\to C\odot 1_{\mathfrak{t}(C)}\ \ \text{and}\ \ (\psi_3)_C=r_C:C\to C\odot 1_{\mathfrak{t}(C)}
  \]
  for right counitality. The equifier of these natural transformations, namely the subcategory spanned by the objects $C$ such that $(\phi_i)_C=(\psi_i)_C$ for all $i=1,2,3$, is precisely $\Cmd(\dc{D})$, and all categories and functors involved are accessible by our assumptions and constructions. In particular, the forgetful functor preserves all (filtered) colimits because it is the composite $\overline{\Coalg(F)}\hookrightarrow\Coalg(F)\hookrightarrow\End(\dc{D})$ where both arrows in fact create all colimits -- the first one as seen above and the second one due as is the case for any endofunctor coalgebras category. 
  Hence by \cite[Lem.~2.76]{LocallyPresentable}, $\Cmd(\dc{D})$ is an accessible category.

  Finally, $\Cmd(\dc{D})$ is cocomplete by \cref{(co)limits in (co)monads} and is therefore locally presentable as required.
\end{proof}

We now return to the proof of the earlier lemma. Before showing the claim, let us discuss its meaning. Recall that although colimits are created by the forgetful $\Coalg(F)\to\End(\dc{D})$ for a
category of coalgebras for any endofunctor $F\colon\End(\dc{D})\to\End(\dc{D})$, limits in categories of coalgebras are in general not preserved by that forgetful functor. One case this
is true (e.g. dual of \cite[Prop.~4.6]{AdamekPorst}) is when $F$ preserves said limits; since $F$ in this case is given by \cref{eq:defF}, and horizontal composition 
is not expected to preserve e.g. pullbacks, monomorphisms are not in general preserved by $F$.
However, we can at least verify that the \emph{vertical} part of a monorphism in $\Coalg(F)$ is in fact a $\dc{D}_0$-monomorphism,
which turns out to be precisely what was needed in the above proof.

\begin{proof}[Proof of \cref{lem:mono}]
We will show that $\Coalg(F)\to\dc{D}_0$ preserves monomorphisms. In order to accomplish this, it will be convenient to factor this functor as a composition $\Coalg(F)\to\Coalg(P)\to\dc{D}_0$, where $P$ is the endofunctor
\[
P\colon\dc{D}_0\xrightarrow{\phantom{AA}}\dc{D}_0\phantom{aa}
\]
\[
\begin{tikzcd}[row sep=5pt, column sep=0pt]
	X \arrow[dd, "f"'] &         & X\times X \arrow[dd, "f\times f"] \\
	& \mapsto &                                   \\
	Y                  &         & Y\times Y
\end{tikzcd}
\]
and $\Coalg(F)\to\Coalg(P)$ is defined by mapping $(C,c_1,c_2)\mapsto (X,(x_1,x_2)\colon X\to X\times X)$, namely for an $F$-coalgebra $C$ as in \cref{eq:structureC} this functor discards the structure 2-maps and only keeps track of their vertical boundary maps, which can then be grouped as a single map into a product. Since $P$ clearly preserves pullbacks, the forgetful $\Coalg(P)\to\dc{D}_0$ creates pullbacks and hence in particular preserves monomorphisms. So it suffices to show that $\Coalg(F)\to\Coalg(P)$ also has this property. For the latter, we now show that this functor -- call it $U$ -- has a left adjoint $L\colon\Coalg(P)\to\Coalg(F)$.

Let us denote by $I_Z$ the initial object of $\dcD{Z}{Z}$ for any $Z\in\dc{D}_0$. The fundamental observation we will make use of is the following: if vertical morphisms $f\colon  Z\to X$ and $g\colon Z\to Y$ have been chosen, then there is a uniquely determined 2-cell
\begin{displaymath}
\begin{tikzcd}
	Z\ar[d,"f"']\ar[r,bul, "I_Z"]\ar[dr,phantom,"\Two"] & Z\ar[d,"g"] \\
	X\ar[r,bul,"M"'] & Y
\end{tikzcd}
\end{displaymath}
for any $M\colon X\bular Y$. The reason for this is that, because $I_Z$ is initial, there is a unique globular 2-cell 
\begin{displaymath}
\begin{tikzcd}
	Z\ar[d,equal]\ar[drrr,phantom,"\Two"]\ar[rrr,bul,"I_Z"] & & & Z\ar[d,equal] \\
	Z\ar[r,bul,"\wh{f}"'] & X\ar[r,bul,"M"'] & Y\ar[r,bul,"\wc{g}"'] & Z
\end{tikzcd}
\end{displaymath}
which then by fibrancy of $\dc{D}$ uniquely corresponds to a non-globular one of the previous form.

Now given a $P$-coalgebra $(X,x_1,x_2)$, we define $L(X,x_1,x_2)$ to be the $F$-coalgebra whose underlying 1-cell is $I_X$, with structure 2-cells 
\begin{displaymath}
\begin{tikzcd}
	X\ar[d,"x_1"']\ar[rr,bul,"I_X"]\ar[drr,phantom,"\Two"] && X\ar[d,"x_1"] \\
	X\ar[r,bul,"I_X"'] & X\ar[r,bul,"I_X"'] & X,
\end{tikzcd}
\qquad
\begin{tikzcd}
	X\ar[d,"x_2"']\ar[r,bul,"I_X"]\ar[dr,phantom,"\Two"] & X\ar[d,"x_2"] \\
	X\ar[r,bul,"1_X"'] & X
\end{tikzcd}
\end{displaymath}
the ones uniquely determined by their vertical components, as described above. Similarly, given a morphism $f\colon(X,x_1,x_2)\to(Y,y_1,y_2)$ of $P$-coalgebras, we define $Lf\colon L(X,x_1,x_2)\to L(Y,y_1,y_2)$ to be the uniquely determined 2-cell $I_X\Rightarrow I_Y$ with vertical components both equal to $f\colon X\to Y$. The fact that this is indeed an $F$-coalgebra morphism follows once more from the fact that morphisms beginning from $I_X$ are uniquely determined by their vertical components, where the desired equalities hold by the assumption that we have a $P$-coalgebra morphism to begin with. The functoriality also follows by uniqueness.

Finally consider an $F$-coalgebra morphism $L(Z,z_1,z_2)\Rightarrow (C,c_1,c_2)$ as below
\begin{displaymath}
\begin{tikzcd}
	Z\ar[d,"h"']\ar[r,bul,"I_Z"]\ar[dr,phantom,"\Two"] & Z\ar[d,"h"] \\
	X\ar[r,bul,"C"'] & X
\end{tikzcd}
\end{displaymath}
where $(C,c_1,c_2)$ is as in (\ref{eq:structureC}). Once again by the property of $I_Z$, the equalities making this 2-cell a morphism of $F$-coalgebras are valid iff they are so vertically, i.e. iff the pair of equalities $hz_1=x_1h$, $hz_2=x_2h$ hold. But the latter is precisely the statement that $h\colon Z\to X$ is a morphism of $P$-coalgebras. This shows that there is a bijection between $F$-coalgebra morphisms $L(Z,z_1,z_2)\Rightarrow (C,c_1,c_2)$ and $P$-coalgebra morphisms $(Z,z_1,z_2)\to U(C,c_1,c_2)$, which by similar arguments is easily seen to be natural.

\end{proof}

\begin{rmk}
Since $\VMMat$ is a locally presentable double category under the conditions of \cref{ex:lpdouble}, and in fact all colimits are preserved by functors $M\circ\mi\circ N$, we obtain that the category of cocategories $\VCocat$ (\cref{ex:cocat}) is a locally presentable category itself. In fact, the above arguments can be used to fix a gap in the proof of \cite[Prop.~4.31]{VCocats} in the specific context of matrices.
  
  Moreover, \cref{lem:mono} applied in $\VMMat$ says that any monomorphic $\ca{V}$-cofunctor comprises of a monomorphism between the set of objects of $\ca{V}$-cocategories, namely is injective
  on objects. As an idea, this appears as the dual of the well-known fact that epimorphic functors are surjective on objects (but not surjective on arrows).
\end{rmk}

\begin{rmk}\label{rmk:nonfibrantthm}
Using local presentability of comonads in a double category that satisfies the assumptions of \cref{thm:Cmdlp}, one could re-prove the first part of the main result \cite[Thm.~3.4.12]{SweedlerDouble} in a more direct -- yet slightly less informative\footnote{In loc.cit., we obtain also the carrier object of the enriched homs as extra information.} -- way. In more detail, if $\dc{D}$ is a braided monoidal closed and locally presentable double category, we obtain a (tensored and cotensored) enrichment of $\Mnd(\dc{D})$ in $\Cmd(\dc{D})$ as follows. We first form the commutative diagram
\begin{displaymath}
    \begin{tikzcd}[column sep=.6in]
      \Cmd(\dc{D})\ar[r,"{H(\mi,B)^\op}"]\ar[d] & \Mnd(\dc{D})^\op\ar[d] \\
      \End(\dc{D})\ar[r,"{H(\mi,B)^\op}"'] & \End(\dc{D})^\op
    \end{tikzcd}
  \end{displaymath}
 where $H\colon\Cmd(\dc{D})^\op\times\Mnd(\dc{D})\to\Mnd(\dc{D})$ is the ``internal-hom'' functor, coming from the monoidal closed structure of the double category (\cite[\S~3.2]{SweedlerDouble}) and restricts to the categories of monads and comonads exactly like convolution.

Then by \cref{cor:monadicity,cor:Cmd(D)comonadic}, both legs are comonadic by  therefore create colimits, and also the bottom functor preserves colimits since it has a right adjoint as $\End(\dc{D})$ is monoidal closed (\cite[Rem.~3.2.11]{SweedlerDouble}). Thus the top functor preserves colimits, and by a standard adjointness result (\cite[Thm.~5.33]{Kelly}), every cocontinuous functor from a locally presentable category has a right adjoint, here $H(\mi,B)^\op\dashv P(\mi,B)$. This implies the desired enrichment of monads in comonads, using a standard closed-action induced enrichment result originating back to \cite{enrthrvar}.
\end{rmk}

\bibliographystyle{alpha}
\bibliography{References}

\end{document}